\newcommand*{\addFileDependency}[1]{% argument=file name and extension
\typeout{(#1)}% latexmk will find this if $recorder=0
% however, in that case, it will ignore #1 if it is a .aux or 
% .pdf file etc and it exists! If it doesn't exist, it will appear 
% in the list of dependents regardless)
%
% Write the following if you want it to appear in \listfiles 
% --- although not really necessary and latexmk doesn't use this
%
\@addtofilelist{#1}
%
% latexmk will find this message if #1 doesn't exist (yet)
\IfFileExists{#1}{}{\typeout{No file #1.}}
}\makeatother
\newcommand*{\B}[1]{\ifmmode\bm{#1}\else\textbf{#1}\fi}
\newcommand{\be} {\begin{eqnarray*}}
\newcommand{\ee} {\end{eqnarray*}}
\newcommand{\argmin}{\mathop{\rm argmin~}}
\newcommand{\dd}{{\rm d}}
\newcommand{\wht}{\widehat}
\newcommand{\KL}{D_{\mbox{\scriptsize \rm KL}} }
\newcommand{\ri}{\textrm{(i)}}
\newcommand{\rii}{\textrm{(ii)}}
\newcommand{\riii}{\textrm{(iii)}}
\DeclareMathOperator{\supp}{supp}
\DeclareMathOperator{\osc}{Osc}
\DeclareMathOperator{\diag}{diag}
\newcommand{\algrule}[1][.2pt]{\par\vskip.5\baselineskip\hrule height #1\par\vskip.5\baselineskip}
\DeclareMathOperator{\var}{Var}
\def\m{\mathcal}
\def\ms{\mathscr}
\def\mb{\mathbb}
\def\mx{\mbox}
\def\tr{{\rm tr\,}}
\newcommand{\matnorm}[1]{{\left\vert\kern-0.25ex\left\vert\kern-0.25ex\left\vert #1 
    \right\vert\kern-0.25ex\right\vert\kern-0.25ex\right\vert}}
\newcommand{\norm}[1]{{\left\vert\kern-0.25ex\left\vert #1 
    \right\vert\kern-0.25ex\right\vert}}
\newtheorem{theorem}{Theorem}
\newtheorem{lemma}[theorem]{Lemma}
\newtheorem{rem}{Remark}
\newtheorem{ass}{Assumption}
\newcommand{\ry}[1]{\textcolor{blue}{#1}}
\title{Minimizing Convex Functionals over Space of Probability Measures via KL Divergence Gradient Flow}
\author{Rentian Yao}
\author{Linjun Huang}
\author{Yun Yang}
\affil{Department of Statistics, University of Illinois at Urbana-Champaign \authorcr Email: \{rentian2, linjunh2, yy84\}@illinois.edu}
\date{\vspace{-2em}}
\begin{document}
\maketitle

\begin{abstract}
Motivated by the computation of the non-parametric maximum likelihood estimator (NPMLE) and the Bayesian posterior in statistics, this paper explores the problem of convex optimization over the space of all probability distributions. We introduce an implicit scheme, called the implicit KL proximal descent (IKLPD) algorithm, for discretizing a continuous-time gradient flow relative to the Kullback-Leibler divergence for minimizing a convex target functional. We show that IKLPD converges to a global optimum at a polynomial rate from any initialization; moreover, if the objective functional is strongly convex relative to the KL divergence, for example, when the target functional itself is a KL divergence as in the context of Bayesian posterior computation, IKLPD exhibits globally exponential convergence. Computationally, we propose a numerical method based on normalizing flow to realize IKLPD. Conversely, our numerical method can also be viewed as a new approach that sequentially trains a normalizing flow for minimizing a convex functional with a strong theoretical guarantee.
%In our numerical experiments, our method demonstrates promising performance in comparison with explicit schemes and other specialized algorithms for NPMLE and/or Bayesian posterior computation.

%Motivated by computing non-parametric maximum likelihood estimator (NPMLE) in statistics, this paper investigates convex optimization problems over the space of all probability distributions. We introduce the proximal mirror descent algorithm on the space of all probability distributions (IKLPD). IKLPD is shown to converge exponentially fast when the objective functional is relative strongly convex. In the absence of strong convexity, IKLPD still converges to the global optimum with a polynomial rate. The convergence analyses of error tolerant IKLPD and stochastic IKLPD, as important variants of IKLPD in practice, are also provided. Numerically, the algorithm is implemented via normalizing flow. Using IKLPD to solve NPMLE shows advantages in fast convergence (in the sense of number of iterations), flexibility on problem settings, and computational efficiency in high dimensional cases.  To our knowledge, our paper is the first work proposing a computationally tractable algorithm for solving NPMLE with theoretical guarantees.
\end{abstract}

\tableofcontents

\section{Introduction}\label{sec: intro}
Many problems in statistics and machine learning can be formulated as minimizing a functional, denoted as $\m F$, over the space $\ms P(\Theta)$ of all probability distributions on a (parameter) space $\Theta\subset \mb R^d$.
Examples include approximate Bayesian computation~\cite{dai2016provable, yao2022mean}, non-parametric estimation~\cite{yan2023learning}, deep learning~\cite{nitanda2022convex, chizat2022mean, chizat2018global}, and single-cell analysis in mathematical biology~\cite{lavenant2021towards}. Many recent studies consider addressing this optimization problem by numerically realizing the so-called Wasserstein gradient flow (WGF) for minimizing $\m F$, a continuous dynamics on $\ms P(\Theta)$ that evolves in the steepest descent direction of $\m F$ in the Wasserstein metric. WGFs inherit many appealing geometric interpretations from the conventional gradient flows in Euclidean space $\mathbb{R}^d$ and extend them to $\ms P(\Theta)$. However, a rapid convergence guarantee for WGF usually requires the displacement convexity of the objective functional $\m F$ (i.e., convexity of $\m F$ along Wasserstein geodesics, see Appendix~\ref{app:more_def} for a precise definition), which may impose more stringent conditions than the usual $L_2$ convexity of $\m F$ and therefore may not hold in many applications (such as Examples 1 and 2 below). Several works instead consider relaxing the displacement convexity condition to a PL-type inequality on $\m F$~\cite{chewi2020gradient, bolley2012convergence, cattiaux2010note, bolley2013uniform}. However, a PL-type inequality may not hold~\cite{yan2023learning} unless some impractical assumptions are imposed~\cite{nitanda2022convex}, or can be difficult to verify even for simple problems in the Euclidean setting \cite{wensing2020beyond} as it requires prior knowledge on the global optimum.

In this paper, we instead explore the use of Kullback-Leibler (KL) divergence gradient flow (KLGF) to minimize an $L_2$ convex target functional $\mathcal{F}$ over the space of all probability distributions $\ms P(\Theta)$.
%a proximal mirror descent (PMD) approach to optimize a relative (strongly) convex functional $\m F(\rho)$ over the space of all probability distributions $\ms P(\Theta)$ on $\Theta\subset\mb R^d$. 
In particular, we will focus on the following two motivating examples, where the target functionals are $L_2$ convex but not necessarily displacement convex.

\noindent\textbf{Example 1 (Non-parametric maximum likelihood estimation):}
The computation of the non-parametric maximum likelihood estimator (NPMLE) naturally arises in estimating the mixing distributions of mixture models and in using empirical Bayes methods to address compound decision problems. Concretely, we assume that the conditional distribution of a random variable $X$ given a parameter $\theta$ is $p(\,\cdot\,|\,\theta)$, where $\theta\in\Theta$ is drawn from an unknown mixing distribution $P^\ast$ in $\ms P(\Theta)$. Given $n$ i.i.d.~copies $X^n = (X_1, \cdots, X_n)$ of $X$, the NPMLE of $P^\ast$ is defined as
\begin{equation}\label{eqn: NPMLE}
\begin{aligned}
\wht P_n &= \argmin_{\rho\in\ms P(\Theta)}\m L_n(\rho),\quad\mx{with}\quad
\m L_n(\rho)\coloneqq \frac{1}{n}\sum_{i=1}^n -\log \Big(\int_\Theta p(X_i\,|\,\theta)\,\dd\rho(\theta)\Big),
\end{aligned}
\end{equation}
which minimizes the (averaged) negative log-likelihood functional $\m L_n:\ms P(\Theta)\to\mb R$. $\m L_n$ is obviously $L_2$ convex on $\ms P(\Theta)$ but not generally displacement convex (see Appendix~\ref{app:more_def} for an example).

The concept of NPMLE was first proposed in~\cite{kiefer1956consistency}, where they treated and estimated the mixing distribution as an infinite-dimensional object. When the parameter space $\Theta$ is the one-dimensional real line,~\cite{koenker2014convex} shows that $\wht P_n$ is a discrete probability measure with no more than $n$ atoms. Moreover, they propose a numerical method for solving NPMLE in $\mathbb{R}$ by utilizing a space discretization scheme to reformulate~\eqref{eqn: NPMLE} into a convex optimization problem, which can be efficiently solved using modern interior point methods. Consequently, their method is subject to the curse of dimensionality and becomes computationally demanding when dealing with multivariate parameters (see our numerical comparison in Section~\ref{sec: numerical}). When the true mixing distribution $P^\ast$ is a sub-Gaussian distribution on $\mb R$ and $p(\cdot\,|\,\theta)=\m N(\theta, 1)$, \cite{polyanskiy2020self} shows that the number of atoms in $\wht P_n$ reduces to $O(\log n)$. \cite{soloff2021multivariate} extends the optimality analysis of NPMLE to the multivariate and heteroscedastic normal observation model (with known heteroscedasticity), showing that, despite the possible non-uniqueness when $d\geq 2$, a solution with at most $n$ atoms exists.
For the multivariate Gaussian location mixture model where $p(\cdot\,|\,\theta) = \m N(\theta, I_d)$, \cite{yan2023learning} proposes an algorithm to solve~\eqref{eqn: NPMLE} based on discretizing a Wasserstein-Fisher-Rao (WFR) gradient flow~\cite{chizat2018interpolating,gallouet2017jko}, which can be numerically implemented using particle approximation.
%Known as the self-regularization property of NPMLE, the number of atoms is later improved to $O(\log n)$ by~\cite{polyanskiy2020self} when $P^\ast$ is a one-dimensional sub-Gaussian distribution. \cite{soloff2021multivariate} show that NPMLE is not unique in $\mb R^d$ when $d\geq 2$, but there exists a solution with at most $n$ atoms. 
%Their work also extends the algorithm proposed by~\cite{koenker2014convex} into $\mb R^d$. Recently, \cite{yan2023learning} propose a numerical method for solving~\eqref{eqn: NPMLE} when $p(\cdot\,|\,\theta)$ is the density function of $\m N(\theta, \sigma^2I_d)$ with known variance $\sigma^2 > 0$. The algorithm is based on Wasserstein--Fisher--Rao (WFR) gradient flow and can be implemented through an interacting particle system. 

\noindent\textbf{Example 2 (Bayesian posterior sampling):}
In Bayesian statistics, a core problem is sampling from the posterior distribution to estimate unknown parameters via the posterior mean and construct corresponding credible intervals, especially when exact computation of the posterior distribution is infeasible due to non-conjugacy. Given the prior density function $\pi(\theta)$ of the parameter $\theta\in\Theta$ and $n$ i.i.d.~samples $X^n = (X_1, \cdots, X_n)$ drawn from the likelihood function $p(\,\cdot\,|\,\theta)$, the posterior (density) is $\pi_n(\theta) \propto \pi(\theta)\prod_{i=1}^n p(X_i\,|\,\theta)$, which admits a variational characterization as
\begin{align}\label{eqn: posteior_sampling}
\begin{aligned}
&\pi_n = \argmin_{\rho\in\ms P(\Theta)} \int V_n(\theta)\,\dd\rho(\theta) + \int\rho\log\rho,
\quad\mx{where}\quad V_n(\theta) = -\log\pi(\theta) - \sum_{i=1}^n\log p(X_i\,|\,\theta)
\end{aligned}
%\pi_n = \argmin_{\rho\in\ms P(\Theta)} \KL(\rho\,\|\,\pi) - \sum_{i=1}^n \int \log p(X_i\,|\,\theta)\,\dd\rho(\theta)
\end{align}
denotes the effective potential function. In other words, the posterior can be identified as the global minimizer of the KL divergence functional $\KL(\,\cdot\,\|\,\pi_n)$ up to an additive constant. The KL functional is always $L_2$ convex on $\ms P(\Theta)$; but the displacement convexity requires more conditions such as the convexity of $V_n$.

Beyond the classical MCMC algorithms~\cite{tierney1994markov}, some recent advancements in sampling from Bayesian posterior distributions rely on discretizing certain gradient flows in the space of all probability distributions.  One approach is based on the WGF by discretizing its induced stochastic differential equation, namely, the Langevin dynamics. For example,~\cite{dalalyan2017theoretical} proposes the (unadjusted) Langevin Monte Carlo algorithm which discretizes the Langevin dynamics via an explicit scheme. However, this algorithm is known to produce a non-vanishing (asymptotically) bias \cite{wibisono2018sampling, dalalyan2017further} due to the explicit discretization, and is improved to be unbiased via a forward-backward discretization scheme later in~\cite{wibisono2018sampling}. However, the fast convergence of these iterative algorithms based on discretizing the WGF requires imposing stringent conditions on $\pi_n$, such as log-concavity, 
isoperimetry, or log-Sobolev inequalities~\cite{dalalyan2017theoretical,wibisono2018sampling,chewi2021analysis}. On another track, \cite{dai2016provable} proposes a stochastic particle mirror descent algorithm to iteratively approximate the Bayesian posterior density.

\noindent {\bf Our contributions.} In this work, we propose an implicit scheme, called the implicit KL proximal descent (IKLPD) algorithm, for discretizing a continuous-time gradient
flow relative to the Kullback-Leibler divergence for minimizing a general $L_2$ convex
functional $\m F$.  We show that, under the $L_2$ convexity condition alone, IKLPD converges to a global optimum at a polynomial rate from any initialization that admits a density; moreover, if $\m F$ is strongly convex relative to the KL divergence, for example, when $\m F$ itself is a KL divergence as in the context of Bayesian posterior computation, IKLPD exhibits globally exponential convergence. Therefore, the proposed implicit scheme avoids imposing any smoothness condition on the $L_2$-gradient of $\m F$, as is typically required by an explicit discretization scheme; and a low smoothness level adds a strong constraint on the learning rate (or step size) of the algorithm. Moreover, it is noteworthy that, unlike functions over Euclidean space where a Lipschitz gradient condition is generally not overly stringent, a Lipschitz $L_2$-gradient condition on $\m F$ can either rule out many commonly used functionals, such as the KL divergence, or require substantial effort to verify. Our development can also be extended to a general (implicit) proximal mirror descent algorithm with a Bregman divergence beyond the KL; see Appendix~\ref{app:general_divergence} for more details.
% Here XXX refers to more details in the appendix not references. Meaning that refer to the appendix detailing how it can be extended to a general Bergman divergence

Computationally, we propose a numerical method based on normalizing flow \cite{dinh2016density,rezende2015variational,kingma2018glow,papamakarios2021normalizing} to implement IKLPD. The compositional structure of the normalizing flow aligns perfectly with the iterative nature of our time-discretization algorithm. Specifically, we sequentially stack the local short normalizing flow, learned within each IKLPD iteration, to form a global, layered normalizing flow for approximating a minimizer of $\m F$. Alternatively, our algorithm can also be viewed as a new approach that sequentially trains a normalizing flow for minimizing a convex functional $\m F$ over $\ms P(\Theta)$ with a strong theoretical guarantee. When employed for computing the NPMLE and Bayesian posteriors, our method exhibits promising performance compared to explicit schemes and other specialized competing algorithms.

We also consider two extensions of our development. In the first extension, we allow nonzero numerical error to occur when solving each implicit step and investigate how these errors accumulate (Theorem~\ref{thm: error tol}), which provides guidance on the design of stopping criteria in solving the implicit step.
In the second extension, we propose and analyze the convergence of a stochastic version of IKLPD (Theorem~\ref{thm: stochastic}), which is useful in practical applications where the sample size is large. To our knowledge, this is the first study that analyzes a stochastic proximal type algorithm for optimizing functionals on the space of all probability distributions.

%We also investigate the error tolerant IKLPD algorithm (Theorem~\ref{thm: error tol}), which allows a controllable numerical error in each iteration. This setting tackles the issue of numerical errors, which always exist in practice, and the case that no exact solution exists in some iterations when $\Theta$ is unbounded. 
%(iii) In many statistical and machine learning settings, especially when the sample size is large, solving the subproblem in each iteration with full data may lead to huge computational burden. Therefore, we also establish the convergence analysis of stochastic IKLPD (Theorem~\ref{thm: stochastic}). To our knowledge, Theorem~\ref{thm: stochastic} is the first work that studies a stochastic proximal type algorithm for optimizing functionals on the space of all probability distributions. 

\noindent {\bf More related works.}
%\paragraph{Mirror Descent on the Space of Probabilities}
%\cite{dai2016provable} proposes a particle-based stochastic mirror descent algorithm on the space of all probability distributions for approximating the Bayesian posterior distribution. 
\cite{ying2020mirror} applies a mirror descent algorithm for minimizing an interacting free energy over $\ms P(\Theta)$ composed of a potential energy, a KL divergence and a self-interaction energy; however, they do not provide any convergence analysis.
\cite{aubin2022mirror,chizat2021convergence} prove the explicit convergence rate of the mirror descent for minimizing general (strongly) convex functionals over the space of all probability distributions.
%We want to emphasize that the continuous dynamics of the (proximal) mirror descent on the space of all probability distributions is \emph{different} from the mirror flow~\cite{deb2023wasserstein} and the mirror Langevin dynamics~\cite{hsieh2018mirrored, zhang2020wasserstein}; these methods are only proposed to minimize KL-type functionals, and the convergence is studied in the sense of Wasserstein distance or KL divergence.
%Another approach is to discretize the KL divergence gradient flow. \cite{dai2016provable} proposes a particle-based method to explicitly discretize KLGF and derive an explicit algorithmic convergence rate. Noting that the domain of all particles does no change in~\cite{dai2016provable}, alternatively,~\cite{lu2023birth} uses a random, jump, particle process to discretize KLGF, and~\cite{chen2023gradient} restricts the KLGF to the space of all Gaussian distributions and discretize the evolution ODEs of the mean and the covariance of the Gaussian distributions. 
\cite{chizat2021convergence} studies the convergence of the mirror descent algorithm for minimizing a special class of composite convex targets $\m F$ whose primary component depends on $\rho\in\ms P(\Theta)$ through a linear functional. When specializing the Bregman divergence to the KL, their algorithm can be viewed as an explicit scheme to discretize the KL gradient flow. As a result, their theory requires $\m F$ to have a Lipschitz $L_2$-gradient and does not cover common $f$-divergences \cite{renyi1961measures} such as the KL. \cite{aubin2022mirror} proposes a different smoothness characterization called relative smoothness, which is analogous to the Euclidean case smoothness characterization via quadratic bounds. However, they only verify their conditions for the KL functional, with applications to the entropic optimal transport and Expectation Maximization (EM). In addition, their convergence bound diverges to infinity as the (global) minimizer of $\m F$ becomes singular (i.e., does not admit a density). Moreover, these two papers \cite{chizat2021convergence,aubin2022mirror} do not provide concrete numerical methods to implement their algorithms.
%neither of these works discuss the implementation of their algorithms. In our paper, we provide a possible implementation via normalizing flow~\cite{dinh2016density, Stimper2023}.

%Several main differences between our work and their works are listed here. 
%(i) Certain continuity of $\m F$ is required to guarantee the algorithmic convergence in both of their works. Even $\rho^\ast$ is absolutely continuous, the result in~\cite{chizat2021convergence} requires that the gradient of the first variation of $\m F$ is Lipschitz continuous, while the analysis in~\cite{aubin2022mirror} is based on the relative smoothness of $\m F$. In comparison, the smoothness of $\m F$ is unnecessary in IKLPD when $\rho^\ast$ is absolutely continuous (Theorem~\ref{thm: discrete_regular}). 
%(ii) \cite{aubin2022mirror} directly assume the existence of the solution in each mirror descent step. In the same spirit,~\cite{chizat2021convergence} assumes the compactness of the parameter space, so that each mirror descent step admits an exact solution due to Prokhorov's Theorem~\cite{prokhorov1956convergence}. 

As we mentioned earlier,~\cite{koenker2014convex, soloff2021multivariate, yan2023learning}
propose some state-of-the-art algorithms for numerically computing the NPMLE. However, the WFR based method by \cite{yan2023learning} only applies to the Gaussian location mixture model and does not have an explicit convergence rate guarantee; the convex optimization based algorithms by \cite{koenker2014convex, soloff2021multivariate} approximate the target distribution through histograms by space discretization, and therefore suffers from the curse of dimensionality. In comparison, our method has the worse case $O(k^{-1})$ convergence guarantee after $k$ iterations, and tends to be scalable to higher dimensions.
For Bayesian posterior computation, MCMC is known to exhibit slow mixing in complex or high-dimensional problems, and most existing numerical algorithms based on Langevin dynamics require stringent conditions such as log-concavity, isoperimetry, or log-Sobolev inequalities~\cite{dalalyan2017theoretical,wibisono2018sampling,chewi2021analysis} to guarantee fast convergence. In comparison, our algorithm guarantees exponential convergence without imposing any conditions on the target posterior, as long as the implicit step can be efficiently implemented, which is true at least in our concerned examples.
Additional literature review on mirror descent and stochastic (proximal) mirror descent in the Euclidean space, along with optimization algorithms on the space of all probability distributions, can be found in the supplementary material.

%In Section~\ref{sec: IKLPD}, we introduce the IKLPD algorithm and its corresponding continuous dynamics, which are motivated by the proximal mirror descent algorithm and the mirror descent ODE on the Euclidean space. Section~\ref{sec: main_results} is consists of all theoretical results of the convergence properties of IKLPD and its variants, error tolerant IKLPD and stochastic IKLPD. In Section~\ref{sec: numerical}, we discuss the implementation of IKLPD based on normalizing flow. As examples, our algorithm is applied to solving NPMLE and minimizing KL-divergence type functionals and compared with existing algorithms.

\section{KL Divergence Gradient Flow and Implicit Time Discretization} \label{sec: IKLPD}
\begin{comment}
% Euclidean
Given an initialization $x_0\in\mb R^d$ and a strictly convex differentiable function $\phi: \mb R^d\to\mb R$, proximal mirror descent (PMD) minimizes a convex function $f$ by iteratively solving 
\begin{equation}\label{eqn: PMD_euclidean}
x_{k} = \argmin_{x\in\mb R^d} f(x) + \frac{1}{\tau_{k}}D_\phi(x, x_{k-1}),\quad k\geq 1,
\end{equation}
where $D_\phi(x, x') = \phi(x) - \phi(x') - \nabla\phi(x')^\top(x-x') \geq 0$ is the Bregman divergence between $x$ and $x'$, and $\tau_k$ is the step size in the $k$-th iteration. Different from mirror descent, PMD does not approximate the objective function $f$ in first order around the last iterate $x_{k-1}$, which avoids the demand of continuity of $f$. A special case of~\eqref{eqn: PMD_euclidean} is the proximal descent algorithm when $\phi(x) = \frac{\|x\|^2}{2}$ and $D_\phi(x, x') = \frac{1}{2}\|x-x'\|^2$. In other words, PMD generalizes the proximal descent algorithm by taking into account the geometry of the problem through the Bregman function $\phi$. The updating formula~\eqref{eqn: PMD_euclidean} can also be formulated as $\nabla\phi(x_{k})- \nabla\phi(x_{k-1}) = -\tau_k\nabla f(x_{k})$ by considering the first-order optimality condition of~\eqref{eqn: PMD_euclidean}; the corresponding continuous dynamics, mirror descent ODE~\cite{krichene2015accelerated}, is $y_t = \nabla\phi(x_t), \,\dot{y}_t = -\nabla f(x_t)$.
\end{comment}

% probability space
To begin with, we briefly introduce some useful definitions. Let $\m F:\ms P(\Theta)\to \mb R$ be a lower semi-continuous functional and $\ms P^r(\Theta)$ denote the set of all probability distributions admitting a density on $\Theta$.
%A measure $\rho\in\ms P(\Theta)$ is called \emph{regular} for $\m F$ if $\m F\big(\varepsilon\rho + (1-\varepsilon)\rho'\big) < \infty$ for all $\varepsilon\in(0, 1)$ and any $\rho'\in\ms P^r(\Theta)$ that has compact support and bounded density. 
Under mild conditions (see Appendix~\ref{app:more_def} for details), one can define the \emph{first variation} of $\m F$ at $\rho\in \ms P(\Theta)$ as a map $\frac{\delta\m F}{\delta\rho}(\rho): \Theta\to\mb R$ such that for any perturbation $\chi=\rho' - \rho$ with $\rho'\in\ms P^r(\Theta)$,
\begin{align*}
\frac{\dd}{\dd\varepsilon}\m F(\rho+\varepsilon\chi)\bigg|_{\varepsilon=0} = \int_{\Theta}\frac{\delta\m F}{\delta\rho}(\rho)\,\dd\chi.
\end{align*}
Note that $\frac{\delta\m F}{\delta\rho}(\rho)$ is only uniquely defined up to an additive constant. The first variation can be viewed as the $L_2$-gradient of $\m F$ in $\ms P(\Theta)$. A functional $\m F$ is called
\emph{$\lambda$-relative strongly convex} (relative to KL) if for any pair of regular probability measures $\rho,\,\rho'\in\ms P(\Theta)$ ($\ms P^r(\Theta)$ when $\lambda > 0$) such that $\m F(\rho)$ is finite, 
\begin{align*}
\m F(\rho') \geq \m F(\rho) + \int\frac{\delta\m F}{\delta\rho}(\rho)\,\dd(\rho'-\rho) + \lambda\KL(\rho'\,\|\,\rho).
\end{align*}
We simply say $\m F$ to be ($L_2$-)convex if $\m F$ satisfies the above inequality with $\lambda =0$. Note that the NPMLE example in Section~\ref{sec: intro} has a convex $\m F$, and the Bayesian posterior example therein has a $1$-relative strongly convex $\m F$; see Appendix~\ref{app:convexity} for a proof.
% \textcolor{red}{include many examples that are (strongly) $L_2$ convex with proofs/ref, at least including the two motivating examples; showing that $L_2$ convexity is common.}

\begin{rem} 
The $L_2$ convexity and the displacement convexity are not directly comparable. For example, the KL divergence functional in~\eqref{eqn: posteior_sampling} is always $L_2$ convex but not displacement convex unless potential $V_n$ is a convex function over $\Theta$. Conversely, the self-interaction energy functional $\m W(\rho) = \int_{\mb R^2}(x-y)^2\,\dd\rho(x)\dd\rho(y)$ is displacement convex due to the convexity of the square function~\cite{mccann1997convexity}; however, direct calculation yields
$\frac{1}{2}\big(\m W(\rho) + \m W(\rho')\big) - \m W\big(\frac{1}{2}(\rho + \rho')\big)= \frac{1}{2}\big(\int_{\mb R} x\,\dd(\rho-\rho')(x)\big)^2\leq 0$, indicating that $\m W$ is instead $L_2$-concave.
\end{rem}

Given an initialization $\rho_0\in\ms P^r(\Theta)$, we consider the following iterative scheme for minimizing $\m F$ with step size $\{\tau_k:\,k\geq 1\}$, 
\begin{align}\label{eqn: IKLPD}
\rho_{k} = \argmin_{\rho\in\ms P(\Theta)} \m F(\rho) + \frac{1}{\tau_k}\KL(\rho\,\|\,\rho_{k-1}), \,\, k\geq 1,
\end{align} 
which will be referred to as the implicit KL proximal descent (IKLPD) algorithm. In Section~\ref{sec: NF}, we propose using a normalizing flow~\cite{kobyzev2020normalizing} to numerically optimize the objective in the implicit step~\eqref{eqn: IKLPD}. Note that this implicit step optimization problem becomes easier as the step size $\tau_k$ becomes smaller, as the optimal solution $\rho_k$ is expected to become closer to the previous iterate $\rho_{k-1}$ (e.g.,~$\KL(\rho_k\,\|\,\rho_{k-1})=O(\tau_k)$), so that a few (stochastic) gradient iterations are sufficient to produce a relatively good solution. In contrast, as $\tau_k\to \infty$, implementing the implicit step becomes as hard as solving the original problem of minimizing $\m F$. We conduct a numerical experiment in Section~\ref{sec: numerical} to explore the impact of the step size $\tau_k$ on the implicit step computation and the overall convergence of the IKLPD algorithm.

It is worth noting that IKLPD extends the implicit gradient descent method for minimizing a function $f$ on $\Theta$ under Euclidean $\ell_2$ metric $\|\cdot\|$, 
\begin{align*}
    x_k = \argmin_{x\in \Theta} f(x) + \frac{1}{2\tau_k}\|x-x_{k-1}\|^2, \,\, k\geq 1, 
\end{align*}
which is also the proximal point method \cite{rockafellar1997convex,boyd2004convex} with convex function $\frac{1}{2}\|\cdot\|^2$; in particular, IKLPD changes the discrepancy measure $\frac{1}{2}\|x-x_{k-1}\|^2$ with $\KL(\rho\,\|\,\rho_{k-1})$. More generally, we may also consider a broader class of implicit mirror descent algorithms by substituting the KL with a general Bregman divergence, such as $L_2$ distance, Itakura--Saito divergence~\cite{savchenko2019itakura}, and hyperbolic divergence~\cite{ghai2020exponentiated}. The key property of Bregman divergences used in the proof is the ``three-points identity" (e.g.,~Lemma 3.1 in~\cite{chen1993convergence}), which connects the first variation of the objective~\eqref{eqn: IKLPD} with the Bregman divergence. %Since neither $\chi^2$-divergence nor Renyi's $\alpha$-divergence~\cite{li2016renyi} are Bregman divergences (see Appendix~\ref{app:general_divergence}), our current proof cannot be adapted to the gradient flow relative to these divergences.
%such as the $\alpha$-divergence~\cite{li2016renyi} or the $f$-divergence~\cite{wan2020f}, particularly in applications like approximate Bayesian computation. 
However, our considered KL is often better aligned with the information geometry inherent to statistical problems. In contrast, other common divergences in statistics, such as the $\chi^2$ divergence and the R\'{e}nyi divergence are not Bregman divergences (see Appendix~\ref{app:general_divergence}).
%\ry{Neither $\chi^2$ divergence or Renyi's $\alpha$-divergence is a Bregman divergence. Generally, a $f$-divergence is not necessarily a Bregman divergence.}

Analogous to gradient (or mirror) descent in Euclidean space~\cite{krichene2015accelerated}, which can be interpreted as discretizing a continuous-time gradient flow on $\Theta$, IKLPD also corresponds to employing an implicit discretization scheme for the KL gradient flow (KLGF) on $\ms P(\Theta)$, which is described by the ordinary differential equation (ODE)
\begin{align}\label{eqn: IKLPD_cts}
\frac{\dd}{\dd t}\big(\log\rho_t\big)= - \frac{\delta\m F}{\delta\rho}(\rho_t) + \int_\Theta\frac{\delta\m F}{\delta\rho}(\rho_t)(\theta)\,\dd\rho_t(\theta).
\end{align}
This dynamic is also known as the Fisher-Rao gradient flow~\cite{bauer2016uniqueness, yan2023learning}. Let $\rho^\ast$ be a global minimum of $\m F$.
The following theorem shows the convergence of KLGF for an $L_2$ convex functional $\m F$.

\begin{theorem}\label{thm: FR_dynamics}
Assume $\m F$ to be a $\lambda$-relative strongly convex functional with respect to the KL divergence and $\rho^\ast\in\ms P^r(\Theta)$. If $\lambda >0$, then $\rho_t$ satisfies
\begin{align*}
\KL(\rho^\ast\,\|\,\rho_t) \leq e^{-\frac{\lambda t}{2}}\KL(\rho^\ast\,\|\,\rho_0);
\end{align*}
if $\lambda = 0$, then $\bar\rho_t = \frac{1} {t}\int_0^t\rho_s\,\dd s$ satisfies
\begin{align*}
\m F(\bar{\rho}_t) - \m F(\rho^\ast) \leq \frac{1}{t}\KL(\rho^\ast\,\|\,\rho_0).
\end{align*}
\end{theorem}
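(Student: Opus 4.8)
Abbreviate $\psi_t\coloneqq\frac{\delta\m F}{\delta\rho}(\rho_t)$ and write $\bar\psi_t\coloneqq\int_\Theta\psi_t\,\dd\rho_t$ for its mean under $\rho_t$, so that~\eqref{eqn: IKLPD_cts} reads $\dt\log\rho_t=-(\psi_t-\bar\psi_t)$, i.e.\ $\dt\rho_t=-\rho_t\,(\psi_t-\bar\psi_t)$. The first things I would check are that the flow is well posed and that $\rho_t$ remains in $\ms P^r(\Theta)$ for every $t$: subtracting the mean is exactly the normalization that forces $\frac{\dd}{\dd t}\int\rho_t=-\bar\psi_t+\bar\psi_t\int\rho_t=0$, so total mass stays $1$, while $\rho_t=\rho_0\exp\!\big(-\int_0^t(\psi_s-\bar\psi_s)\,\dd s\big)$ stays strictly positive. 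Combined with the standing regularity hypotheses this keeps $\psi_t$, $\m F(\rho_t)$ and $\KL(\rho^\ast\,\|\,\rho_t)$ meaningful along the trajectory; in particular $\m F(\rho_t)\le\m F(\rho_0)<\infty$ since $\m F$ is non-increasing along the flow, a consequence of the dissipation identity $\frac{\dd}{\dd t}\m F(\rho_t)=\int\psi_t\,\dt\rho_t=-\var_{\rho_t}(\psi_t)\le0$.

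The core of the argument is the evolution of the Lyapunov functional $\m E(t)\coloneqq\KL(\rho^\ast\,\|\,\rho_t)=\int\rho^\ast\log\rho^\ast-\int\rho^\ast\log\rho_t$. Differentiating the second term and substituting the ODE,
\[
\m E'(t)\;=\;-\int\rho^\ast\,\dt\log\rho_t\;=\;\int\psi_t\,\dd\rho^\ast-\bar\psi_t\;=\;\int\psi_t\,\dd(\rho^\ast-\rho_t),
\]
where the constant $\bar\psi_t$ disappears because $\rho^\ast-\rho_t$ has zero total mass --- consistent with $\psi_t$ being defined only up to an additive constant. I would then invoke the $\lambda$-relative strong convexity of $\m F$ with base point $\rho_t$ and target $\rho^\ast$ (legitimate since $\m F(\rho_t)$ is finite, and both measures lie in $\ms P^r(\Theta)$ when $\lambda>0$):
\[
\int\psi_t\,\dd(\rho^\ast-\rho_t)\;\le\;\m F(\rho^\ast)-\m F(\rho_t)-\lambda\,\KL(\rho^\ast\,\|\,\rho_t)\;\le\;-\lambda\,\m E(t),
\]
the last inequality because $\rho^\ast$ globally minimizes $\m F$, hence $\m F(\rho^\ast)\le\m F(\rho_t)$. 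This gives the differential inequality $\m E'(t)\le-\lambda\,\m E(t)$. For $\lambda>0$, Grönwall's inequality yields $\m E(t)\le e^{-\lambda t}\m E(0)$, which in particular implies the stated $e^{-\lambda t/2}$ bound.

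For $\lambda=0$ I would instead keep the sharper inequality $\m E'(t)\le\m F(\rho^\ast)-\m F(\rho_t)$ and integrate it over $[0,t]$; rearranging and using $\m E(t)\ge0$ gives
\[
\frac1t\int_0^t\m F(\rho_s)\,\dd s\;\le\;\m F(\rho^\ast)+\frac{\m E(0)-\m E(t)}{t}\;\le\;\m F(\rho^\ast)+\frac1t\KL(\rho^\ast\,\|\,\rho_0).
\]
Because $\m F$ is $L_2$-convex it is convex along line segments in $\ms P(\Theta)$, and a routine approximation/lower-semicontinuity argument extends Jensen's inequality to the time-average, so $\m F(\bar\rho_t)\le\frac1t\int_0^t\m F(\rho_s)\,\dd s$; combining with the previous display closes this case.

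I expect the real difficulty to be analytic rather than algebraic: justifying the interchange of $\frac{\dd}{\dd t}$ and $\int_\Theta$ in the computation of $\m E'(t)$ and $\frac{\dd}{\dd t}\m F(\rho_t)$, establishing global-in-time existence of the Fisher--Rao flow with $\rho_t$ staying absolutely continuous and of finite energy (so the relative strong convexity and $L_2$-convexity inequalities genuinely apply at every $t$), and making sure $t\mapsto\KL(\rho^\ast\,\|\,\rho_t)$ is finite and differentiable along the trajectory --- or, if differentiability fails, running the same inequality chain with an upper Dini derivative. These are precisely the regularity points that the standing assumptions deferred to the appendix are meant to handle; once they are granted, the Lyapunov computation above is short.
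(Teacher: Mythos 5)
Your proof follows essentially the same route as the paper's: differentiate $\KL(\rho^\ast\,\|\,\rho_t)$ along the flow, use that $\rho^\ast-\rho_t$ has zero total mass to drop the normalizing constant, apply $\lambda$-relative strong convexity together with the optimality of $\rho^\ast$, and conclude via Gr\"onwall for $\lambda>0$ and via integration plus Jensen's inequality for $\lambda=0$. The only difference is cosmetic: applying the convexity definition as literally stated gives you the sharper rate $e^{-\lambda t}$ (which implies the claim), whereas the paper's proof uses the constant $\tfrac{\lambda}{2}$ in that step and lands exactly on the stated $e^{-\lambda t/2}$.
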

%We want to emphasize that our analyses (except Theorem~\ref{thm: stochastic}) can be extended to general Bregman divergences, such as $\alpha$-divergence~\cite{li2016renyi} and $f$-divergence~\cite{wan2020f}. In the Bayesian posterior sampling problem, KL divergence in~\eqref{eqn: IKLPD} should be replaced by these more general types of Bregman divergences if they are used to conduct (approximate) Bayesian posterior computation.

\section{Theoretical Results}
\label{sec: main_results}
We analyze the convergence of IKLPD and two variants: an inexact IKLPD that permits non-zero numerical errors when solving the implicit step~\eqref{eqn: IKLPD}, and a stochastic version of IKLPD.

\subsection{Convergence of IKLPD}
We make the following assumptions.
\begin{ass}[Existence of IKLPD iterates]\label{assump: exist}
For each $k\geq 1$, the solution $\rho_k$ as the $k$-th iteration of IKLPD algorithm as defined by~\eqref{eqn: IKLPD} exists.
\end{ass}
Assumption~\ref{assump: exist} is typically verifiable by applying Prokhorov's Theorem~\cite{prokhorov1956convergence} when $\mathcal{F}$ is continuous with respect to the weak topology of $\ms {P}(\Theta)$. The continuity of $\mathcal{F}$ holds for many models, including the NPMLE discussed in Section~\ref{sec: intro}.

%Besides the existence of IKLPD iterates, the following relative (strong) convexity assumption is also standard and necessary in the convergence analysis of mirror descent~\cite{dragomir2021fast, aubin2022mirror}, and is referred to as relative convexity assumption when $\lambda = 0$. 
\begin{ass}[Relative strong convexity]\label{assump: convexity}
$\m F$ is $\lambda$-relative strongly convex on $\ms P(\Theta)$ for $\lambda \geq 0$.
\end{ass}
Assuming some convexity condition is standard and necessary in the convergence analysis of proximal type algorithms~\cite{dragomir2021fast, aubin2022mirror}.

%We have the following convergence result for IKLPD.
\begin{theorem}\label{thm: discrete_regular}
Suppose Assumptions~\ref{assump: exist} and~\ref{assump: convexity} hold and $\rho^\ast\in\ms P^r(\Theta)$.
\newline
\noindent (1) If $\lambda > 0$ and $\tau_k \equiv \tau > 0$, then we have
\begin{align*}
\KL(\rho^\ast\,\|\,\rho_k)\leq \Big(1 + \frac{\lambda\tau}{2}\Big)^{-k}\KL(\rho^\ast\,\|\,\rho_0).
\end{align*}
\newline
\noindent (2) If $\lambda = 0$, then we have
\begin{align*}
\min_{1\leq \ell\leq k}\m F(\rho_\ell) - \m F(\rho^\ast) \leq \frac{1}{\sum_{\ell=1}^k \tau_\ell}\KL(\rho^\ast\,\|\,\rho_0).
\end{align*}
\end{theorem}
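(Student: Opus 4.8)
The plan is to mimic the standard convergence proof of the proximal point (or implicit mirror descent) method in Euclidean space, transported to $\ms P(\Theta)$ via the ``three-points identity'' for the KL divergence. First I would write down the first-order optimality condition for the implicit step~\eqref{eqn: IKLPD}: since $\rho_k$ minimizes $\m F(\rho) + \tau_k^{-1}\KL(\rho\,\|\,\rho_{k-1})$, the first variation of this functional at $\rho_k$ must be constant on $\supp(\rho_k)$; computing the first variation of $\KL(\rho\,\|\,\rho_{k-1})$ as $\log(\rho/\rho_{k-1}) + 1$, this reads
\begin{align*}
\frac{\delta\m F}{\delta\rho}(\rho_k)(\theta) + \frac{1}{\tau_k}\log\frac{\rho_k(\theta)}{\rho_{k-1}(\theta)} = c_k \quad \rho_k\text{-a.e.}
\end{align*}
for some constant $c_k$. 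Next I would invoke the three-points identity for KL: for any $\rho$, $\rho'$, $\rho''$,
\begin{align*}
\KL(\rho\,\|\,\rho'') - \KL(\rho\,\|\,\rho') - \KL(\rho'\,\|\,\rho'') = \int \log\frac{\rho'}{\rho''}\,\dd(\rho - \rho').
\end{align*}
Applying this with $\rho = \rho^\ast$, $\rho' = \rho_k$, $\rho'' = \rho_{k-1}$ and substituting $\log(\rho_k/\rho_{k-1}) = \tau_k\big(c_k - \frac{\delta\m F}{\delta\rho}(\rho_k)\big)$ from the optimality condition, the constant $c_k$ drops out because $\int \dd(\rho^\ast - \rho_k) = 0$, leaving
\begin{align*}
\KL(\rho^\ast\,\|\,\rho_{k-1}) - \KL(\rho^\ast\,\|\,\rho_k) - \KL(\rho_k\,\|\,\rho_{k-1}) = -\tau_k \int \frac{\delta\m F}{\delta\rho}(\rho_k)\,\dd(\rho^\ast - \rho_k).
\end{align*}

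Then I would apply the $\lambda$-relative strong convexity (Assumption~\ref{assump: convexity}) at the pair $(\rho_k, \rho^\ast)$, i.e. $\m F(\rho^\ast) \geq \m F(\rho_k) + \int \frac{\delta\m F}{\delta\rho}(\rho_k)\,\dd(\rho^\ast - \rho_k) + \lambda\KL(\rho^\ast\,\|\,\rho_k)$, to bound the right-hand side from above, obtaining the key one-step inequality
\begin{align*}
(1 + \lambda\tau_k)\KL(\rho^\ast\,\|\,\rho_k) + \KL(\rho_k\,\|\,\rho_{k-1}) \leq \KL(\rho^\ast\,\|\,\rho_{k-1}) - \tau_k\big(\m F(\rho_k) - \m F(\rho^\ast)\big).
\end{align*}
Since $\m F(\rho_k) - \m F(\rho^\ast) \geq 0$ and $\KL(\rho_k\,\|\,\rho_{k-1}) \geq 0$, for part (1) with $\lambda > 0$ and $\tau_k \equiv \tau$ this gives $(1 + \lambda\tau)\KL(\rho^\ast\,\|\,\rho_k) \leq \KL(\rho^\ast\,\|\,\rho_{k-1})$; here I would note that the constant is $1 + \lambda\tau$ rather than $1 + \lambda\tau/2$ as stated, so either the theorem statement has slack or a factor of $2$ is deliberately inserted for robustness — I would keep the sharper $1+\lambda\tau$ and observe it implies the claimed bound. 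Iterating over $k$ yields the geometric rate. For part (2) with $\lambda = 0$, the one-step inequality becomes $\tau_\ell\big(\m F(\rho_\ell) - \m F(\rho^\ast)\big) \leq \KL(\rho^\ast\,\|\,\rho_{\ell-1}) - \KL(\rho^\ast\,\|\,\rho_\ell)$; summing over $\ell = 1, \dots, k$ telescopes the right-hand side to $\KL(\rho^\ast\,\|\,\rho_0) - \KL(\rho^\ast\,\|\,\rho_k) \leq \KL(\rho^\ast\,\|\,\rho_0)$, and bounding each $\m F(\rho_\ell) - \m F(\rho^\ast)$ below by $\min_{1\leq \ell \leq k}\m F(\rho_\ell) - \m F(\rho^\ast)$ gives $\big(\sum_{\ell=1}^k \tau_\ell\big)\big(\min_\ell \m F(\rho_\ell) - \m F(\rho^\ast)\big) \leq \KL(\rho^\ast\,\|\,\rho_0)$, which is the claim.

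The main obstacle is rigor around the first-variation / optimality step rather than the algebra: one must justify that the minimizer $\rho_k$ of the implicit step admits a density (so that $\KL(\rho_k\,\|\,\rho_{k-1})$ and the log-ratio are well defined) and that the Euler--Lagrange condition holds in the stated pointwise form, handling the constraint $\rho \in \ms P(\Theta)$ and the possibility that $\supp(\rho_k) \subsetneq \Theta$. A clean way around this is to argue that $\rho_k$ necessarily has a density because $\KL(\rho\,\|\,\rho_{k-1}) = +\infty$ otherwise (as $\rho_{k-1} \in \ms P^r(\Theta)$), and to derive the needed inequality directly from optimality via a variational perturbation $\rho_k + \varepsilon(\rho^\ast - \rho_k)$, taking $\frac{\dd}{\dd\varepsilon}\big|_{\varepsilon = 0^+} \geq 0$, which reproduces the same inequality without needing the exact form of $c_k$; I would also need $\KL(\rho^\ast\,\|\,\rho_k) < \infty$ along the way, which is where the hypothesis $\rho^\ast \in \ms P^r(\Theta)$ and a short absolute-continuity argument enter. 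I expect these measure-theoretic points to be dispatched by the regularity assumptions and a lemma of the type already alluded to in Appendix~\ref{app:more_def}, so the proof body should be short.
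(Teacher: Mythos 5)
Your proof is correct and takes essentially the same route as the paper: its Lemma~A1 is exactly your one-step inequality, obtained from the first-order optimality condition of the implicit step combined with relative strong convexity (the paper carries out your three-points-identity computation inline in that lemma's proof), and the theorem follows by the same iteration in case (1) and telescoping plus taking the minimum in case (2). Your remark about the constant is also consistent with the paper, which applies the strong convexity inequality with the weaker coefficient $\lambda/2$ and thus lands on the stated $(1+\lambda\tau/2)^{-k}$ rate, while your sharper per-step factor $(1+\lambda\tau)$ indeed implies the claimed bound.
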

\begin{rem}
Several remarks are in order. First, when $\rho^\ast\in\ms P^r(\Theta)$, we do not need any extra condition beyong the convexity to guarantee the convergence of IKLPD. As we discussed in the introduction, this is different from the explicit discretization scheme considered in~\cite{chizat2021convergence,aubin2022mirror}, which require additional smoothness conditions. Second, our proof for the $\lambda =0$ case also implies the same convergence bound to hold for the weighted trajectory average $\bar\rho_k=\big(\sum_{\ell=1}^k \tau_\ell\big)^{-1}\sum_{\ell=1}^k \tau_\ell \,\rho_\ell$. Third, if $\lambda=0$ and $\tau_k=\tau$, then the IKLPD exhibits an $O(k^{-1})$ convergence rate after $k$ iterations, which matches the convergence rate of the Euclidean proximal mirror descent algorithm for minimizing a smooth and convex function (e.g. Theorem 10.81 in~\cite{beck2017first}).
%derive the same convergence result for mirror descent on the space of all probability distributions under certain smoothness assumption. \cite{chizat2021convergence}'s result requires the smoothness of first variation (see Assumption~\ref{assump: smooth_first_variation}), which does not hold for KL-divergence type functionals. The results in~\cite{aubin2022mirror} are based on the relative $L$-smoothness assumption, which generalizes the traditional $L$-smoothness assumption on the Euclidean space for minimizing a convex function via gradient descent. The relative $L$-smoothness assumption is usually hard to verify when the functional is not of KL-divergence type.
\end{rem}

Theorem~\ref{thm: discrete_regular} requires $\rho^\ast$ to admit a density, so that the initial KL divergence $\KL(\rho^\ast\,\|\,\rho_0)$ is finite. However, in many applications, such as the NPMLE computation, $\rho^\ast$ can contain singular components or can even be a discrete measure~\cite{polyanskiy2020self}. In these cases, 
Assumption~\ref{assump: convexity} can only hold with  $\lambda=0$. 
To see this, we can apply the $\lambda$-relative strong convexity to $\rho=\rho_0$ for any $\rho_0\in\ms P^r(\Theta)$ with a bounded $\frac{\delta\m F}{\delta\rho}(\rho_0)$, and $\rho'=\rho^\sigma = \rho^\ast\ast\m N(0, \sigma^2I_d)\in\ms P^r(\Theta)$, the convolution of $\rho^\ast$ with a normal distribution. This yields  $\lambda\KL(\rho^\sigma\,\|\,\rho_0) \leq \m F(\rho^\sigma) - \m F(\rho_0) - \int\frac{\delta\m F}{\delta\rho_0}(\rho_0)\,\dd(\rho^\sigma-\rho_0) $. As we let $\sigma\to0^+$, the right-hand side of this inequality is finite, while the KL term $\KL(\rho^\sigma\,\|\,\rho_0)$ diverges when $\rho^\ast\not\in \ms P^r(\Theta)$, indicating that $\lambda = 0$. 
%In fact, we can apply the $\lambda$-relative strong convexity to $\rho'=\rho^\ast$ and $\rho=\rho_0$ for any $\rho_0\in\ms P^r(\Theta)$ with a bounded $\frac{\delta\m F}{\delta\rho}(\rho_0)$ to obtain $\m F(\rho^\ast) - \m F(\rho_0) \geq \int\frac{\delta\m F}{\delta\rho_0}(\rho_0)\,\dd(\rho^\ast-\rho_0) + \lambda\KL(\rho^\ast\,\|\,\rho_0)$. Since the left-hand side of this inequality is finite, while the KL term $\KL(\rho^\ast\,\|\,\rho_0)$ is infinite when $\rho^\ast\not\in \ms P^r(\Theta)$, indicating that $\lambda = 0$. 
To extend the convergence result to such $\rho^\ast$ that does not admit a density, we need an additional assumption about the continuity of $\m F$ around $\rho^\ast$. Let $W_1$ denote the $1$-Wasserstein metric; see Appendix~\ref{app:more_def} for a precise definition.
\begin{ass}[Local $W_1$-continuity]\label{assump: smooth_first_variation}
There exists a constant $L > 0$ such that
\begin{align*}
|\m F(\rho) - \m F(\rho^\ast)| \leq L W_1(\rho,\, \rho^\ast),\quad\forall\,\rho\in\ms P(\Theta).
\end{align*}
%\textcolor{red}{this definition is sloppy!}
%\begin{align*}
%W_1(\rho, \rho') \coloneqq \inf_{\theta\sim\rho, \theta'\sim\rho'}\mb E\|\theta-\theta'\|.
%\end{align*}
%The first variation of $\m F$ is $L$-smooth at all $\m \rho\in\ms P^r(\Theta)$, i.e. for all $\theta, %\theta'\in\Theta$, it holds that
%\begin{align*}
%\Big\|\nabla \frac{\delta\m F}{\delta\rho}(\rho)(\theta) - \nabla\frac{\delta\m F}{\delta\rho}(\rho)(\theta')\Big\| \leq L\|\theta-\theta'\|.
%\end{align*}
\end{ass}
This local continuity condition on $\mathcal{F}$ is less stringent than a typical smoothness condition assumed in the analysis of explicit schemes that involves the first variation, and it is satisfied in our examples.
%Denoting the convolution of $\rho^\ast$ with an arbitrary probability measure $\psi$ by $\rho^\ast\ast\psi$, Assumption~\ref{assump: smooth_first_variation} helps control the smoothing error $|\m F(\rho^\ast\ast\psi) - \m F(\rho^\ast)|$ with a well-chosen $\psi$,. To control this smoothing error,~\cite{chizat2021convergence} makes the assumption on the smoothness of the first variation of $\m F$. When the first variation $\frac{\delta\m F}{\delta\rho}$ is $L$-smooth, the same convergence rate as in Theorem~\ref{thm: discrete} can be derived through similar arguments to the current proof. In fact, Assumption~\ref{assump: smooth_first_variation} can be relaxed by only considering all probability measures $\rho=\rho^\ast\ast\psi$ with normal distributions (may degenerate) $\psi$. 

%Assumption~\ref{assump: smooth_first_variation} is commonly made in the literature of optimization on the space of all probability distributions. When minimizing entropic regularized risk functionals via WGF, this assumption helps guarantee the existence of the strong solution of the associated Langevin dynamics and control the entropic regularized risk functionals by an approximation argument~\cite{chizat2022mean, mei2018mean}. Assumption~\ref{assump: smooth_first_variation} also helps control the smoothing error in mirror descent~\cite{chizat2021convergence} and proximal mirror descent, as shown in Lemma~\ref{lem: smooth_err} in Supplementary, on the space of all probability distributions.

\begin{theorem}\label{thm: discrete}
If Assumptions~\ref{assump: exist}, \ref{assump: convexity} and~\ref{assump: smooth_first_variation} hold with $\lambda=0$ and $\rho^\ast\in\ms P(\Theta)$, then for any $\rho\in \ms P^r(\Theta)$, 
\begin{align*}
\min_{1\leq \ell\leq k}\m F(\rho_\ell) - \m F(\rho^\ast)
\leq\frac{\KL(\rho\,\|\,\rho_0)}{\sum_{\ell=1}^k\tau_\ell} + \frac{L}{2} W_1(\rho,\,\rho^\ast).
\end{align*}
%where $\psi_{m_1}$ ranges over all probability measures with first-order moment $m_1 < \infty$, and $\rho^\ast\ast\psi_{m_1}$ is the convolution of $\rho^\ast$ and $\psi_{m_1}$.
\end{theorem}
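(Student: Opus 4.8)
The plan is to run the same telescoping argument that proves Theorem~\ref{thm: discrete_regular}(2), but with an arbitrary reference measure $\rho\in\ms P^r(\Theta)$ in place of $\rho^\ast$, and then pay a penalty to compare $\m F(\rho)$ with $\m F(\rho^\ast)$ using the local $W_1$-continuity of $\m F$. First I would record the first-order optimality condition for the implicit step~\eqref{eqn: IKLPD}: since $\rho_k$ minimizes $\m F(\cdot)+\tau_k^{-1}\KL(\cdot\,\|\,\rho_{k-1})$, for every competitor $\nu\in\ms P^r(\Theta)$ one has
\begin{align*}
\int\frac{\delta\m F}{\delta\rho}(\rho_k)\,\dd(\nu-\rho_k) + \frac{1}{\tau_k}\int\log\frac{\rho_k}{\rho_{k-1}}\,\dd(\nu-\rho_k)\geq 0.
\end{align*}
Combining this with the ``three-points identity'' for the KL divergence, namely
$\int\log\frac{\rho_k}{\rho_{k-1}}\,\dd(\nu-\rho_k) = \KL(\nu\,\|\,\rho_{k-1}) - \KL(\nu\,\|\,\rho_k) - \KL(\rho_k\,\|\,\rho_{k-1})$,
and the $L_2$ convexity (Assumption~\ref{assump: convexity} with $\lambda=0$) in the form $\m F(\nu)\geq \m F(\rho_k) + \int\frac{\delta\m F}{\delta\rho}(\rho_k)\,\dd(\nu-\rho_k)$, I obtain, after dropping the nonnegative term $\KL(\rho_k\,\|\,\rho_{k-1})$,
\begin{align*}
\tau_k\big(\m F(\rho_k) - \m F(\nu)\big) \leq \KL(\nu\,\|\,\rho_{k-1}) - \KL(\nu\,\|\,\rho_k).
\end{align*}

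Next I would take $\nu=\rho$ (the given density in $\ms P^r(\Theta)$), sum this inequality over $\ell=1,\dots,k$, and telescope the right-hand side to $\KL(\rho\,\|\,\rho_0) - \KL(\rho\,\|\,\rho_k)\leq \KL(\rho\,\|\,\rho_0)$. This gives
$\sum_{\ell=1}^k\tau_\ell\big(\m F(\rho_\ell)-\m F(\rho)\big)\leq \KL(\rho\,\|\,\rho_0)$, hence
\begin{align*}
\min_{1\leq\ell\leq k}\m F(\rho_\ell) - \m F(\rho) \leq \frac{\KL(\rho\,\|\,\rho_0)}{\sum_{\ell=1}^k\tau_\ell}.
\end{align*}
Finally I would add and subtract $\m F(\rho^\ast)$: $\min_\ell \m F(\rho_\ell)-\m F(\rho^\ast) = \big(\min_\ell\m F(\rho_\ell)-\m F(\rho)\big) + \big(\m F(\rho)-\m F(\rho^\ast)\big)$, and bound the last difference by Assumption~\ref{assump: smooth_first_variation} as $|\m F(\rho)-\m F(\rho^\ast)|\leq L\,W_1(\rho,\rho^\ast)$.

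This would yield the bound $\KL(\rho\,\|\,\rho_0)/\sum\tau_\ell + L\,W_1(\rho,\rho^\ast)$, which has the extra factor $L$ rather than $L/2$ in front of the Wasserstein term as claimed. I expect the improvement to $L/2$ is the only genuinely delicate point: it should come from not discarding the full $\KL(\rho_k\,\|\,\rho_{k-1})$ term but instead using it, together with a more careful choice of reference measure or a Pinsker-type / transport estimate, to absorb half of the continuity penalty. A cleaner route is to apply the convexity inequality at $\nu=\rho^\ast$ directly where it is valid and at $\nu=\rho$ where density is needed, interpolating between the two — e.g. running the telescoped estimate with $\nu=\rho$ but then invoking local $W_1$-continuity twice (once to pass from $\m F(\rho_\ell)$ toward the value along a geodesic/interpolation between $\rho$ and $\rho^\ast$), which naturally produces the factor $\tfrac12$. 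The main obstacle is thus purely this constant-sharpening step; the convergence mechanism itself is the standard proximal telescoping already used for Theorem~\ref{thm: discrete_regular}, and the role of Assumption~\ref{assump: smooth_first_variation} is simply to let $\rho$ approximate a possibly-singular $\rho^\ast$ while keeping $\KL(\rho\,\|\,\rho_0)$ finite.
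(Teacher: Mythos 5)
Your argument is exactly the paper's proof: apply the one-step proximal inequality (Lemma on the implicit step, equivalently the constant first variation plus the three-points identity and $L_2$ convexity) with an arbitrary reference $\rho\in\ms P^r(\Theta)$, telescope to get $\min_{1\leq\ell\leq k}\m F(\rho_\ell)-\m F(\rho)\leq \KL(\rho\,\|\,\rho_0)/\sum_{\ell=1}^k\tau_\ell$, and then convert $\m F(\rho)$ to $\m F(\rho^\ast)$ via Assumption~\ref{assump: smooth_first_variation}. The constant you agonize over is not a gap on your side: the paper's own appendix proof also produces $L\,W_1(\rho,\rho^\ast)$, not $\frac{L}{2}W_1(\rho,\rho^\ast)$, and its subsequent special-case computations likewise use the factor $L$; the $\frac{L}{2}$ in the theorem statement is an inconsistency in the paper rather than the result of a sharper argument, so your speculative interpolation/KL-absorption step for recovering the $\tfrac12$ is neither carried out in the paper nor needed for the argument it actually gives.
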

\begin{rem}
Theorem~\ref{thm: discrete} suggests that when $\rho^\ast$ contains singular components, the convergence rate of IKLPD may depend on finer structures on the singularity of $\rho^\ast$ as we want to construct some $\rho$ to compensate for the singularity. For example, if $\rho^\ast$ is a discrete measure, then the convergence rate is $O(\frac{d\log k}{k})$; generally, if $\rho^\ast$ is supported on a $d'$-dimensional hyperplane in the ambient space $\Theta\subset \mb R^d$ with $d'<d$, then the convergence rate becomes $O(\frac{(d-d')\log k}{k})$ (the support of a discrete measure has an effective dimension $d'=0$); see Appendix~\ref{app: discrete_proof} for a proof, where we choose $\rho$ in the theorem as the convolution of $\rho^\ast$ and a $(d-d')$-dim standard Gaussian distribution whose variance is optimized to make the upper bound smallest, so that the convoluted distribution admits a density.
\end{rem}

\subsection{Convergence of Inexact IKLPD}
For inexact IKLPD, we allow non-zero numerical errors when solving the implicit step~\eqref{eqn: IKLPD}, and study their impact on overall convergence and the design of the implicit step stopping criterion. In practice, one can use the first-order optimility condition $\frac{\delta\m F}{\delta\rho}(\rho) + \frac{1}{\tau_k}\log\frac{\rho}{\rho_{k-1}}=$constant to design stopping criterion and monitor the convergence of the implicit step optimization sub-problem~\eqref{eqn: IKLPD}. Specifically, let $\{\rho_k^{\rm err}:\,k\geq 0\}$ denote the iterates from an inexact IKLPD, and let
\begin{align}\label{eqn: first variation iterate}
\eta_k(\cdot) \coloneqq \frac{\delta\m F}{\delta\rho}(\rho_k^{\rm err})(\cdot) + \frac{1}{\tau_k}\log\frac{\rho_k^{\rm err}}{\rho_{k-1}^{\rm err}}(\cdot)
\end{align}
denote the first variation (as a function over $\Theta$) of the target functional in the implicit step~\eqref{eqn: IKLPD} evaluated at $\rho_k^{\rm err}$.
Let $\{\varepsilon_k:\,k\geq 1\}$ denote a generic sequence of error tolerance levels. For technical convenience, we characterize the convergence of each implicit step optimization via the oscillation of $\eta_k$, and make the following assumption.

\begin{ass}[Uniform error control]\label{assump: error}
For each $k\geq 1$ and $\varepsilon_k \geq 0$, we have $\osc_\Theta(\eta_k) \leq \varepsilon_k$, where
\begin{align*}
\osc_\Theta(\eta_k)\coloneqq  \sup_{\theta, \theta'\in\Theta}\|\eta_k(\theta)-\eta_k(\theta')\|
\end{align*}
is the oscillation of $\eta_k$ over $\Theta$.
\end{ass}
There are also other types of inexact algorithms for optimizing functionals on the space of all probability distributions \cite{dai2016provable,kent2021frank}, some of which are not implementable since they require knowledge of unknown quantities, such as the exact solution of the subproblem, to evaluate the tolerance metric. In our context, one can also use other characterizations, such as the variance of $\eta_k$ under $\rho_{k-1}^{\rm err}$ that is easier to compute in practice. 
%\cite{dai2016provable} minimize KL-divergence type functionals via stochastic mirror descent while allowing an $\varepsilon_k$-suboptimal solution in the $k$-th iteration. When solving distributionally robust optimization problem via Frank--Wolfe algorithm,~\cite{kent2021frank} define the tolerant error through the $L^2$ distance between the Wasserstein derivative of the objective functional evaluated at the $k$-th iterate and the approximated Wasserstein derivative generated from a Wasserstein derivative oracle. Both of these conditions are difficult to verify in practice, since they require the true minimum of the subproblem in the $k$-th iteration or the true Wasserstein derivative of the objective functional. In addition, the techniques for tackling the tolerant error in their papers require certain smoothness of the objective functional, while the smoothness is not guaranteed in our setting.

The following theorem illustrates the impact of error tolerance level on the convergence rate of inexact IKLPD when $\m F$ is $\lambda$-relative strongly convex for $\lambda>0$.
In particular, we consider two regimes: $\varepsilon_k$ has either an exponential decay or a polynomial decay in $k$; and the inexact IKLPD exhibits different convergence patterns under the two regimes.
%When the error decays exponential fast with contraction factor $\varepsilon$, $\rho_k^{\rm err}$ converges to $\rho^\ast$ exponentially fast in KL divergence, with the contraction factor being the maximum of $\varepsilon^2$ and $(1+\lambda\tau/2)^{-1}$; when the error decays with a polynomial rate $O(k^{-\alpha})$, $\rho_k^{\rm err}$ converges to $\rho^\ast$ with a polynomial rate $O(k^{-2\alpha})$.
\begin{theorem}\label{thm: error tol}
Suppose Assumption~\ref{assump: convexity} holds with $\lambda > 0$ and Assumption~\ref{assump: error} also holds, and consider $\tau_k \equiv  \tau$.
\newline
\noindent (1) If $\varepsilon_k \leq \kappa\varepsilon^k$ for some $\kappa > 0$ and $0 < \varepsilon < 1$ satisfying $\varepsilon\sqrt{1+\lambda\tau/2}\neq 1$, then there exists a constant $C = C(\tau, \lambda, \varepsilon) > 0$ such that
\begin{align*}
\KL(\rho^\ast\,\|\,\rho_k^{\rm err}) \leq \frac{C\kappa^2 + 2\KL(\rho^\ast\,\|\,\rho_0)}{(\min\{\varepsilon^{-2}, 1+\lambda\tau/2\})^k};
\end{align*}
\newline
\noindent (2) If $\varepsilon_k \leq \varepsilon k^{-\alpha}$ for some $\varepsilon,\, \alpha > 0$, then there exists a constant $C = C(\tau, \lambda, \alpha) > 0$ such that
\begin{align*}
\KL(\rho^\ast\,\|\,\rho_k^{\rm err}) \leq \frac{2\KL(\rho^\ast\,\|\,\rho_0)}{(1+\lambda\tau/2)^{k}} + \frac{C\varepsilon^2}{k^{2\alpha}}
\end{align*}
\end{theorem}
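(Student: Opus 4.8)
The plan is to derive a one-step recursion $a_k\le q^{-1}a_{k-1}+C_0\varepsilon_k^2$ for $a_k:=\KL(\rho^\ast\,\|\,\rho_k^{\rm err})$, with $q:=1+\lambda\tau/2$ and $C_0=C_0(\tau,\lambda)>0$, then unroll it and estimate the resulting weighted sum of $\varepsilon_\ell^2$ separately in the two decay regimes. Throughout, the iterates are regular, so all KL terms and first variations below are well defined (recall $\rho^\ast\in\ms P^r(\Theta)$ so $a_0=\KL(\rho^\ast\,\|\,\rho_0)<\infty$).

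For the one-step recursion, I would apply the three-points identity for the KL divergence to the triple $(\rho^\ast,\rho_k^{\rm err},\rho_{k-1}^{\rm err})$,
\begin{align*}
\KL(\rho^\ast\,\|\,\rho_{k-1}^{\rm err}) &= \KL(\rho^\ast\,\|\,\rho_k^{\rm err}) + \KL(\rho_k^{\rm err}\,\|\,\rho_{k-1}^{\rm err}) \\
&\qquad + \int\log\tfrac{\rho_k^{\rm err}}{\rho_{k-1}^{\rm err}}\,\dd(\rho^\ast-\rho_k^{\rm err}),
\end{align*}
and substitute $\tfrac1\tau\log\tfrac{\rho_k^{\rm err}}{\rho_{k-1}^{\rm err}}=\eta_k-\tfrac{\delta\m F}{\delta\rho}(\rho_k^{\rm err})$ from the definition~\eqref{eqn: first variation iterate} of $\eta_k$. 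The term $-\tau\int\tfrac{\delta\m F}{\delta\rho}(\rho_k^{\rm err})\,\dd(\rho^\ast-\rho_k^{\rm err})$ is bounded below by $\tau\lambda\,\KL(\rho^\ast\,\|\,\rho_k^{\rm err})$, using $\lambda$-relative strong convexity (Assumption~\ref{assump: convexity}) together with $\m F(\rho_k^{\rm err})\ge\m F(\rho^\ast)$. For the error term $\tau\int\eta_k\,\dd(\rho^\ast-\rho_k^{\rm err})$, I subtract a constant from $\eta_k$ (which leaves the integral against the mass-zero signed measure $\rho^\ast-\rho_k^{\rm err}$ unchanged) so that $\eta_k\in[0,\varepsilon_k]$ by Assumption~\ref{assump: error}, bound $|\int\eta_k\,\dd(\rho^\ast-\rho_k^{\rm err})|\le\varepsilon_k\|\rho^\ast-\rho_k^{\rm err}\|_{\mathrm{TV}}$, and apply Pinsker's inequality to get a lower bound $-\tau\varepsilon_k\sqrt{a_k/2}$. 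Discarding $\KL(\rho_k^{\rm err}\,\|\,\rho_{k-1}^{\rm err})\ge0$ and invoking Young's inequality in the form $\tau\varepsilon_k\sqrt{a_k/2}\le\tfrac{\lambda\tau}{2}a_k+\tfrac{\tau}{4\lambda}\varepsilon_k^2$ then gives $a_{k-1}\ge(1+\tfrac{\lambda\tau}{2})a_k-\tfrac{\tau}{4\lambda}\varepsilon_k^2$, i.e.\ the claimed recursion with $C_0=\tfrac{\tau}{4\lambda q}$.

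Unrolling yields $a_k\le q^{-k}a_0+C_0\sum_{\ell=1}^k q^{-(k-\ell)}\varepsilon_\ell^2$. In regime (1), $\varepsilon_\ell^2\le\kappa^2\varepsilon^{2\ell}$, so the error sum equals $\kappa^2 q^{-k}\sum_{\ell=1}^k(q\varepsilon^2)^\ell$; the hypothesis $\varepsilon\sqrt q\neq1$ ensures $q\varepsilon^2\neq1$, and summing the finite geometric series (absorbing $\tau,\lambda,\varepsilon$-dependent factors into a constant $C$) bounds it by $C\kappa^2(\min\{\varepsilon^{-2},q\})^{-k}$; adding $q^{-k}a_0\le(\min\{\varepsilon^{-2},q\})^{-k}a_0$ gives the stated bound. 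In regime (2), $\varepsilon_\ell^2\le\varepsilon^2\ell^{-2\alpha}$; I split the sum at $\ell=\lceil k/2\rceil$: for $\ell\ge\lceil k/2\rceil$ I bound $\ell^{-2\alpha}\le2^{2\alpha}k^{-2\alpha}$ and $\sum q^{-(k-\ell)}\le\tfrac{q}{q-1}$, while for $\ell<\lceil k/2\rceil$ I bound $q^{-(k-\ell)}\le q^{-k/2}$ and use $q^{-k/2}k=O(k^{-2\alpha})$ (since $q>1$); collecting constants into $C=C(\tau,\lambda,\alpha)$ gives the error sum $\le C\varepsilon^2 k^{-2\alpha}$, whence $a_k\le 2a_0 q^{-k}+C\varepsilon^2 k^{-2\alpha}$.

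The crux, and the step I expect to require the most care, is the conversion of the oscillation tolerance $\varepsilon_k$ into an error contribution of \emph{quadratic} order $\varepsilon_k^2$ via Pinsker plus Young: this is precisely what produces the sharp exponent $\min\{\varepsilon^{-2},1+\lambda\tau/2\}$ in part (1) and the $\varepsilon^2/k^{2\alpha}$ term in part (2); a cruder linear-in-$\varepsilon_k$ estimate (e.g.\ just using $\|\rho^\ast-\rho_k^{\rm err}\|_{\mathrm{TV}}\le1$) would only yield the weaker exponent $\min\{\varepsilon^{-1},1+\lambda\tau\}$. The remaining ingredients — the three-points identity, relative strong convexity combined with optimality of $\rho^\ast$, and the two elementary sum estimates — are routine once the recursion is in hand.
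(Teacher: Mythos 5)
Your proposal is correct and reaches both stated bounds, but it organizes the argument differently from the paper. The shared core is identical: apply relative strong convexity at $\rho_k^{\rm err}$ with $\rho=\rho^\ast$, use the defining relation $\frac{\delta\m F}{\delta\rho}(\rho_k^{\rm err})=\eta_k-\frac{1}{\tau}\log\frac{\rho_k^{\rm err}}{\rho_{k-1}^{\rm err}}$ (your three-points identity is exactly the inline KL computation the paper performs), drop $\m F(\rho^\ast)-\m F(\rho_k^{\rm err})\le 0$, and control the $\eta_k$ term by boundedness-after-recentering plus Pinsker. You then diverge: you absorb the resulting term $\tau\varepsilon_k\sqrt{a_k/2}$ via Young's inequality, sacrificing half of the strong-convexity modulus to obtain a linear recursion on $a_k=\KL(\rho^\ast\,\|\,\rho_k^{\rm err})$ with forcing $\varepsilon_k^2$, whereas the paper keeps the linear-in-$\varepsilon_k$ term, solves the quadratic inequality in $\sqrt{a_k}$, and unrolls a recursion on $\sqrt{a_k}$, only squaring at the end via $(x+y)^2\le 2x^2+2y^2$ (which is where its factor $2$ on $\KL(\rho^\ast\,\|\,\rho_0)$ comes from). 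For the polynomial regime the paper proves $\sum_{l=1}^k (1+\tau\lambda/2)^{l/2}l^{-\alpha}\le C(1+\tau\lambda/2)^{k/2}k^{-\alpha}$ by induction, while your split of the weighted sum at $\lceil k/2\rceil$ together with $kq^{-k/2}=O(k^{-2\alpha})$ is a more elementary substitute; both give $C=C(\tau,\lambda,\alpha)$. Two small caveats: (i) your Young step yields the stated base $1+\lambda\tau/2$ only because you invoke the definition of $\lambda$-relative strong convexity with modulus $\lambda$; the paper's own proof only uses the weaker $\frac{\lambda}{2}\KL$ form but avoids Young altogether, so it loses nothing — under that weaker form your route would give $1+\lambda\tau/4$, a harmless constant-level difference in the exponent base but worth being aware of; (ii) your closing remark that a linear-in-$\varepsilon_k$ estimate necessarily degrades the exponent is not accurate as stated — the paper's recursion is linear in $\varepsilon_k$ (on $\sqrt{a_k}$) and still attains $\min\{\varepsilon^{-2},1+\lambda\tau/2\}$; the real point is that the error must enter at the level of $\sqrt{\KL}$, however one chooses to track it. Neither caveat affects the validity of your proof.
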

\begin{rem}
Similar to Theorem~\ref{thm: discrete}, Theorem~\ref{thm: error tol} requires $\rho^\ast$ to have a density when $\lambda > 0$. Our theorem cannot cover the $\lambda=0$ case, since in order to show $\KL(\rho^\ast\,\|\,\rho_k^{\rm err})$ is decreasing in $k$, we need the relative strong convexity to contribute a term that compensates for the error caused by  $\eta_k$. In addition, the current proof of Theorem~\ref{thm: error tol} can only be extended to cover a Bregman divergence that dominates the $L_1$ distance, such as any divergences stronger than the KL, since we need to use it to address an additional error term that depends on the $L_1$ distance between $\rho_k^{\rm err}$ and $\rho^\ast$.
\end{rem}

\subsection{Convergence of Stochastic IKLPD}
In this section, we propose and analyze a stochastic version of IKLPD, whose $k$-th iterate is given by
\begin{align}\label{eqn: stoch IKLPD}
\rho_{k}^{\rm stoc} = \argmin_{\rho\in\ms P(\Theta)}\m F_{\xi_k}(\rho) + \frac{1}{\tau_k}\KL(\rho\,\|\,\rho_{k-1}^{\rm stoc}).
\end{align}
Here $\m F_{\xi_k}$ is an unbiased estimator of $\m F$ for any fixed input in $\ms P(\Theta)$, with $\xi_k$ indicating the source of randomness in iteration $k$. For example, in a statistical setting such as NPMLE, $\m F_{\xi_k}$ can be the negative log-likelihood functional over a random selected mini-batch. To prove the convergence, we make the following Assumption. 

\begin{ass}[Stochastic IKLPD]\label{assump: stoch IKLPD}
The stochastic objective functional $\m F_{\xi}$ satisfies:
\newline
\noindent (1) (Unbiasedness) $\mb E_\xi\big[\m F_\xi(\rho)\big] = \m F(\rho)$.
\newline
\noindent (2) (Solution existence) A solution of~\eqref{eqn: stoch IKLPD} exists.
\newline
\noindent (3) (Randomness condition) $\{\xi_k: k\geq 1\}$ are independently and identically distributed.
\newline
\noindent (4) (One-sided relative Lipschitz continuity) For some $L(\xi)$ with a finite second-order moment,
\begin{align*}
\m F_\xi(\rho) - \m F_\xi(\rho') \leq L(\xi)\sqrt{\KL(\rho'\,\|\, \rho)}
\end{align*}
holds for every $\rho, \rho'\in\ms P(\Theta)$.
\end{ass}
The one-sided relative Lipschitz continuity condition is also considered by~\cite{bertsekas2011incremental, davis2018stochastic}, which was utilized to analyze the convergence of stochastic proximal descent and stochastic proximal mirror descent in Euclidean space. In our proof, this condition is used to bound the difference of $\m F_{\xi_k}(\rho_{k}^{\rm stoc})$ and $\m F_{\xi_k}(\rho_{k-1}^{\rm stoc})$.

\begin{theorem}\label{thm: stochastic}
Assume that $\m F_\xi$ is $\lambda$-relative strongly convex for $\lambda\geq 0$. Suppose Assumption~\ref{assump: stoch IKLPD} holds and $\rho^\ast\in\ms P^r(\Theta)$. Let $\tau>0$ be a constant.
\newline
\noindent (1) If $\lambda=0$, then by taking $\tau_k = \frac{\tau}{\sqrt{k+1}}$ we have
\begin{align*}
&\min_{0\leq \ell\leq k-1} \mb E\big[\m F(\rho_\ell^{\rm stoc})\big] - \m F(\rho^\ast) \leq \frac{4\KL(\rho^\ast\,\|\,\rho_0) + \tau^2\log(k+1)\mb E [L(\xi_1)^2]}{8\tau(\sqrt{k+1}-1)};
\end{align*}
\newline
\noindent (2) If $\lambda>0$, then by taking $\tau_k = \frac{2}{\lambda(k+1)}$ we have
\begin{align*}
&\min_{0\leq \ell\leq k-1}\mb E\m F(\rho_\ell^{\rm stoc}) - \m F(\rho^\ast) \leq \frac{2\lambda^2\KL(\rho^\ast\,\|\,\rho_0) + \log(k+1)\mb E[L(\xi_1)^2]}{2\lambda k}.
\end{align*}
\end{theorem}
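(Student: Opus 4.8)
The plan is to adapt the standard ``three-points identity plus relative strong convexity'' analysis of proximal mirror descent to the stochastic regime, the new difficulty being that the current iterate $\rho_k^{\rm stoc}$ is correlated with the noise $\xi_k$. First I would invoke the first-order optimality condition for the implicit step~\eqref{eqn: stoch IKLPD}, namely that $\frac{\delta\m F_{\xi_k}}{\delta\rho}(\rho_k^{\rm stoc}) + \tau_k^{-1}\log(\rho_k^{\rm stoc}/\rho_{k-1}^{\rm stoc})$ is a constant on $\Theta$. Combining this with the three-points identity for the KL Bregman divergence,
\[
\KL(\rho^\ast\,\|\,\rho_{k-1}^{\rm stoc}) = \KL(\rho^\ast\,\|\,\rho_k^{\rm stoc}) + \KL(\rho_k^{\rm stoc}\,\|\,\rho_{k-1}^{\rm stoc}) + \int \log\frac{\rho_k^{\rm stoc}}{\rho_{k-1}^{\rm stoc}}\,\dd(\rho^\ast - \rho_k^{\rm stoc}),
\]
and the fact that the additive constant integrates to zero against the signed measure $\rho^\ast - \rho_k^{\rm stoc}$, I obtain $\tau_k\int \frac{\delta\m F_{\xi_k}}{\delta\rho}(\rho_k^{\rm stoc})\,\dd(\rho_k^{\rm stoc}-\rho^\ast) = \KL(\rho^\ast\,\|\,\rho_{k-1}^{\rm stoc}) - \KL(\rho^\ast\,\|\,\rho_k^{\rm stoc}) - \KL(\rho_k^{\rm stoc}\,\|\,\rho_{k-1}^{\rm stoc})$. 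Lower bounding the integral on the left via the assumed $\lambda$-relative strong convexity of $\m F_{\xi_k}$ applied at $(\rho_k^{\rm stoc},\rho^\ast)$ then produces the deterministic one-step inequality
\[
\tau_k\big(\m F_{\xi_k}(\rho_k^{\rm stoc}) - \m F_{\xi_k}(\rho^\ast)\big) \le \KL(\rho^\ast\,\|\,\rho_{k-1}^{\rm stoc}) - (1+\lambda\tau_k)\KL(\rho^\ast\,\|\,\rho_k^{\rm stoc}) - \KL(\rho_k^{\rm stoc}\,\|\,\rho_{k-1}^{\rm stoc}).
\]

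The main obstacle is that $\rho_k^{\rm stoc}$ depends on $\xi_k$, so $\mb E[\m F_{\xi_k}(\rho_k^{\rm stoc})]\ne \m F(\mb E[\rho_k^{\rm stoc}])$ and one cannot take expectations directly. I would resolve this by trading $\m F_{\xi_k}(\rho_k^{\rm stoc})$ for $\m F_{\xi_k}(\rho_{k-1}^{\rm stoc})$, which is independent of $\xi_k$, using the one-sided relative Lipschitz bound $\m F_{\xi_k}(\rho_{k-1}^{\rm stoc}) - \m F_{\xi_k}(\rho_k^{\rm stoc}) \le L(\xi_k)\sqrt{\KL(\rho_k^{\rm stoc}\,\|\,\rho_{k-1}^{\rm stoc})}$ from Assumption~\ref{assump: stoch IKLPD}(4), and then absorbing the cross term $\tau_k L(\xi_k)\sqrt{\KL(\rho_k^{\rm stoc}\,\|\,\rho_{k-1}^{\rm stoc})}$ together with the slack $-\KL(\rho_k^{\rm stoc}\,\|\,\rho_{k-1}^{\rm stoc})$ via the elementary inequality $bu - u^2 \le b^2/4$. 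Taking the conditional expectation given $\xi_1,\dots,\xi_{k-1}$, using unbiasedness on both sides (so the left-hand side becomes $\tau_k(\m F(\rho_{k-1}^{\rm stoc}) - \m F(\rho^\ast))$) together with the i.i.d.\ assumption on $\{L(\xi_k)\}$, and writing $D_k = \mb E[\KL(\rho^\ast\,\|\,\rho_k^{\rm stoc})]$ (finite since $\rho^\ast\in\ms P^r(\Theta)$), I arrive at the recursion
\[
\tau_k\,\mb E\big[\m F(\rho_{k-1}^{\rm stoc}) - \m F(\rho^\ast)\big] \le D_{k-1} - (1+\lambda\tau_k)D_k + \tfrac14\tau_k^2\,\mb E\big[L(\xi_1)^2\big].
\]

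It then remains to sum this recursion against a suitable weight sequence and bound $\min_{0\le\ell\le k-1}\mb E[\m F(\rho_\ell^{\rm stoc})-\m F(\rho^\ast)]$ by the corresponding weighted average. For $\lambda = 0$ with $\tau_k = \tau/\sqrt{k+1}$, the plain sum over $k = 1,\dots,K$ telescopes the $D_k$'s (discarding $-D_K\le 0$), leaves $\sum_{k=1}^K\tau_k^2 \le \tau^2\log(K+1)$, and is divided by $\sum_{k=1}^K \tau_k \gtrsim \tau(\sqrt{K+1}-1)$, giving part~(1). For $\lambda > 0$ with $\tau_k = 2/(\lambda(k+1))$ one has $1+\lambda\tau_k = (k+3)/(k+1)$; weighting the $k$-th inequality by $k$ and using $\tfrac{k(k+3)}{k+1}\ge k+1$ makes the $D_k$ terms telescope, the error terms contribute $\tfrac{\mb E[L(\xi_1)^2]}{\lambda^2}\sum_{k=1}^K \tfrac{k}{(k+1)^2}\le \tfrac{\mb E[L(\xi_1)^2]}{\lambda^2}\log(K+1)$, and dividing by $\sum_{k=1}^K \tfrac{k}{k+1}\ge K/2$ gives part~(2). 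The only delicate points here are choosing the weights so the contraction factors $1+\lambda\tau_k$ telescope cleanly and tracking the harmonic-type sums; the rest is routine bookkeeping.
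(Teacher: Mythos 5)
Your proposal is, in its essential mechanics, the same proof as the paper's: the one-step inequality you derive from the first-order optimality condition, the three-points identity and relative strong convexity is exactly the paper's Lemma on the implicit step (applied with $\m F_{\xi_k}$), and the key stochastic idea—trading $\m F_{\xi_k}(\rho_k^{\rm stoc})$ for $\m F_{\xi_k}(\rho_{k-1}^{\rm stoc})$ via the one-sided relative Lipschitz condition, conditioning on the past, invoking unbiasedness, and absorbing the cross term against $-\KL(\rho_k^{\rm stoc}\,\|\,\rho_{k-1}^{\rm stoc})$ via $bu-u^2\leq b^2/4$—is precisely what the paper does. Your only methodological deviation in this step is that you absorb pathwise and then take expectations, whereas the paper first applies Cauchy--Schwarz to $\mb E[L(\xi_k)\sqrt{\KL}]$ and absorbs in expectation; both are valid and yield the same recursion (your contraction factor $1+\lambda\tau_k$ versus the paper's $1+\lambda\tau_k/2$ only reflects the paper's own internal inconsistency about whether strong convexity contributes $\lambda\KL$ or $\tfrac{\lambda}{2}\KL$, and is harmless).

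The one concrete issue is your handling of part (2). The $k$-weighting is unnecessary and, as you have executed it, does not deliver the stated constant: dividing $D_0+\lambda^{-2}\mb E[L(\xi_1)^2]\sum_{\ell\le k}\tfrac{\ell}{(\ell+1)^2}$ by $\sum_{\ell\le k}\ell\,\tau_\ell=\tfrac{2}{\lambda}\sum_{\ell\le k}\tfrac{\ell}{\ell+1}\ge\tfrac{k}{\lambda}$ gives $\tfrac{\lambda}{k}\KL(\rho^\ast\,\|\,\rho_0)+\tfrac{\log(k+1)}{\lambda k}\mb E[L(\xi_1)^2]$, whose noise term is twice the claimed $\tfrac{\log(k+1)}{2\lambda k}\mb E[L(\xi_1)^2]$. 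The fix is to do what you already do in part (1): sum the unweighted inequalities, since with $\tau_\ell=\tfrac{2}{\lambda(\ell+1)}$ the coefficients $\tau_\ell^{-1}+\tfrac{\lambda}{2}=\tau_{\ell+1}^{-1}$ (and a fortiori with your stronger $\tau_\ell^{-1}+\lambda$) telescope exactly, leaving $\lambda\KL(\rho^\ast\,\|\,\rho_0)+\tfrac{1}{4}\sum_\ell\tau_\ell\,\mb E[L(\xi_1)^2]\leq\lambda\KL(\rho^\ast\,\|\,\rho_0)+\tfrac{\log(k+1)}{2\lambda}\mb E[L(\xi_1)^2]$, and dividing by $k$ gives exactly the theorem. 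In part (1) your "$\gtrsim\tau(\sqrt{k+1}-1)$" glosses the factor needed to reach the stated denominator $8\tau(\sqrt{k+1}-1)$; to get $\sum_\ell\tau_\ell\ge 2\tau(\sqrt{k+1}-1)$ one uses $\ell^{-1/2}\ge 2\sqrt{\ell+1}-2\sqrt{\ell}$, which is how the paper argues (its proof in fact works with $\tau_\ell=\tau/\sqrt{\ell}$ despite the statement's $\tau/\sqrt{\ell+1}$), so this is more an indexing discrepancy inherited from the paper than a flaw in your plan.
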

\begin{rem}
The convergence rates in our theorem match those of stochastic gradient descent~\cite{rakhlin2011making, nemirovski2009robust} and stochastic (proximal) mirror descent~\cite{davis2018stochastic, lan2020first} for minimizing (strongly) convex functions in Euclidean space. Additionally, when Assumption~\ref{assump: smooth_first_variation} holds, the same smoothing argument as in the proof of Theorem~\ref{thm: discrete} can be carried over to deal with a singular $\rho^\ast\not\in\ms P^r(\Theta)$.
\end{rem}

\section{Computation via Normalizing Flow}\label{sec: NF}
We propose using normalizing flow (NF) to solve the implicit step optimization problem~\eqref{eqn: IKLPD}.
Normalizing flows~\cite{dinh2016density,rezende2015variational,kingma2018glow,papamakarios2021normalizing} offer a general mechanism for defining expressive probability distributions thorough transforming a simple probability distribution into a complex one using compositions of invertible and differentiable transformations. For simplicity, we will refer to the IKLPD steps as the outer loop (iterations), and the (stochastic) gradient steps for optimizing the NF parameters in the implicit scheme problem~\eqref{eqn: IKLPD} as the inner loop (iterations). 

Given the shared compositional structure between our iterative IKLPD algorithm and the NF, we propose sequentially stacking the local, short normalizing flows, learned within each inner-loop iteration, to form a global, layered normalizing flow for approximating $\rho^\ast$. Concretely, we use $T_\#\rho$ to denote the pushforward distribution of a distribution $\rho\in\ms P(\Theta)$ through a transport map $T:\,\Theta\to\Theta$, and use $\wht T^{(k)}$ to denote the local normalizing flow learned through solving~\eqref{eqn: IKLPD}, yielding $\rho_k=\wht T^{(k)}_\#\rho_{k-1}$, where
\begin{align*}
\wht T^{(k)} = \argmin_{T\in \m T} \m F\big(T_\# \rho_{k-1}\big) + \frac{1}{\tau_k}\KL\big(T_\# \rho_{k-1}\,\big\|\,\rho_{k-1}\big), 
\end{align*} 
Here, $\m T$ denotes a generic normalizing flow class. 

Note that another benefit of using NF here is that the KL term can be directly computed in terms of a closed form expression of the log-density of $T_\# \rho_{k-1}$, whereas other numerical methods based on particle approximation require the use of kernel density estimation to approximate this density; further details of its numerical computation using (stochastic) gradient descent and the reparametrization trick are provided in Appendix~\ref{app:computation}. With these local NF maps, we can use the telescoping trick to express $\rho_k=\wht T^{(k)}_\#\circ \cdots\circ \wht T^{(1)}_\# \rho_0$, which defines a generative process for sampling from $\rho_k$. As $k$ increases, to maintain a fixed storage budget (e.g.,~keep at most $k_0$ local NFs), one may employ a teacher-student architecture~\cite{hu2022teacher,hinton2015distilling} to distill knowledge by utilizing a single NF to compress all historical local NFs beyond the most recent $(k_0-1)$ ones; see Appendix~\ref{app:computation} for a simple illustration.

\section{Numerical Results}\label{sec: numerical}

For the implementation, we used the Python \texttt{normflows} package \cite{Stimper2023} based on PyTorch to implement the real-valued non-volume preserving (real-NVP) normalizing flow \cite{dinh2016density} for our method.
We consider three examples: NPMLE for Gaussian location mixture model, NPMLE for Gaussian location scale mixture model, and sampling from a distribution known up to a constant (Bayesian computation). For NPMLE, we also consider two state-of-the-art competing methods, the Wasserstein-Fisher-Rao (\texttt{WFR}) gradient flow~\cite{yan2023learning} and a convex optimization based method \cite{koenker2014convex} (referred to as the \texttt{KW} method). For the Bayesian computation example, we compare our method with the (unadjusted) Langevin Monte Carlo algorithm (\texttt{Langevin}), which corresponds to an explicit discretization scheme to the Wasserstein gradient flow. Due to space constraints, we defer the details about the implementations and setup of each example below, as well as additional plots and results, to Appendix~\ref{app:computation}.

\begin{figure*}[t]
\centering
   \begin{subfigure}{0.32\linewidth} \centering
     \includegraphics[scale=0.27]{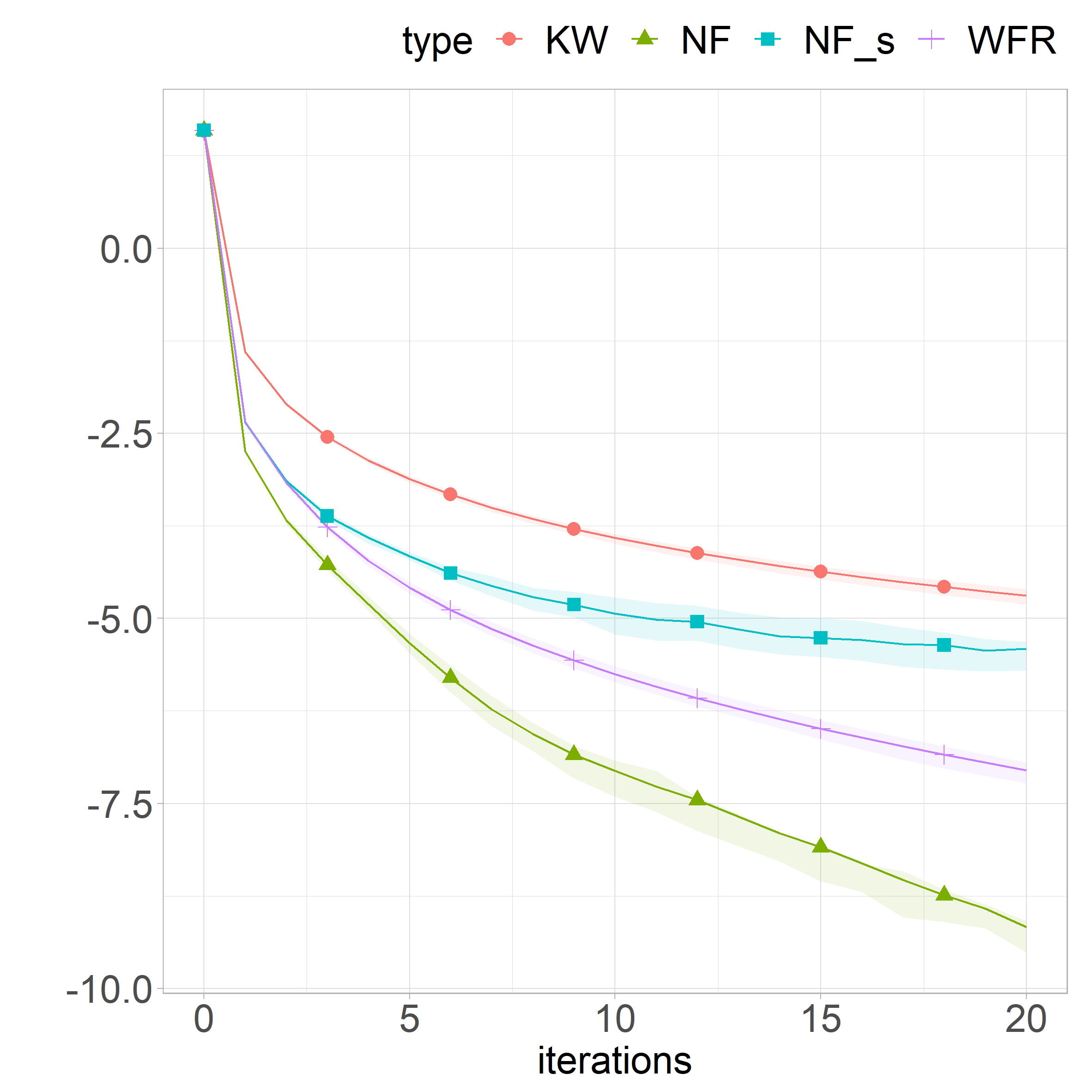}
     \caption{Location Mixture}
   \end{subfigure}
   \begin{subfigure}{0.32\linewidth} \centering
     \includegraphics[scale=0.27]{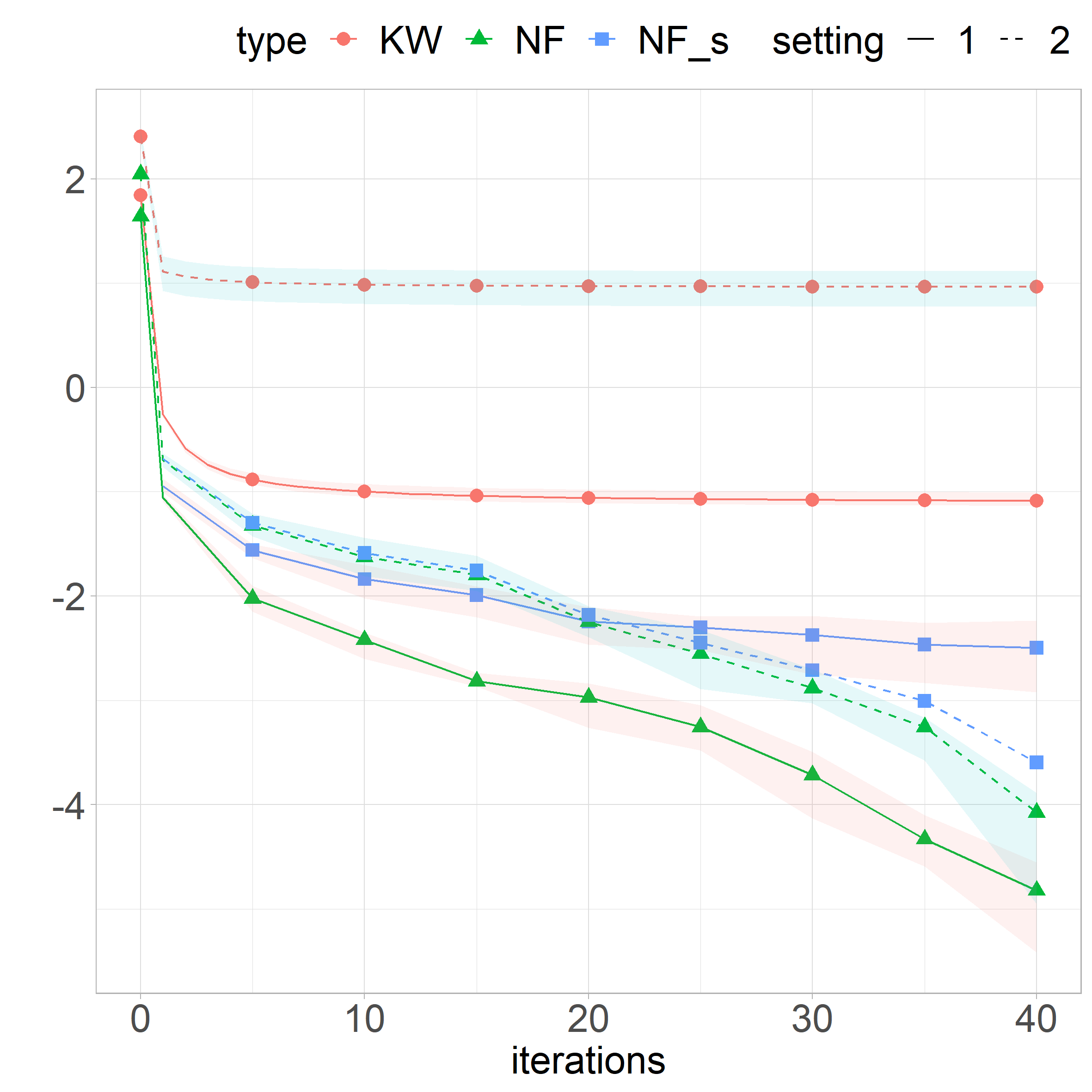}
       \caption{Location Scale Mixture}
   \end{subfigure}
   \begin{subfigure}{0.32\linewidth} \centering
     \includegraphics[scale=0.27]{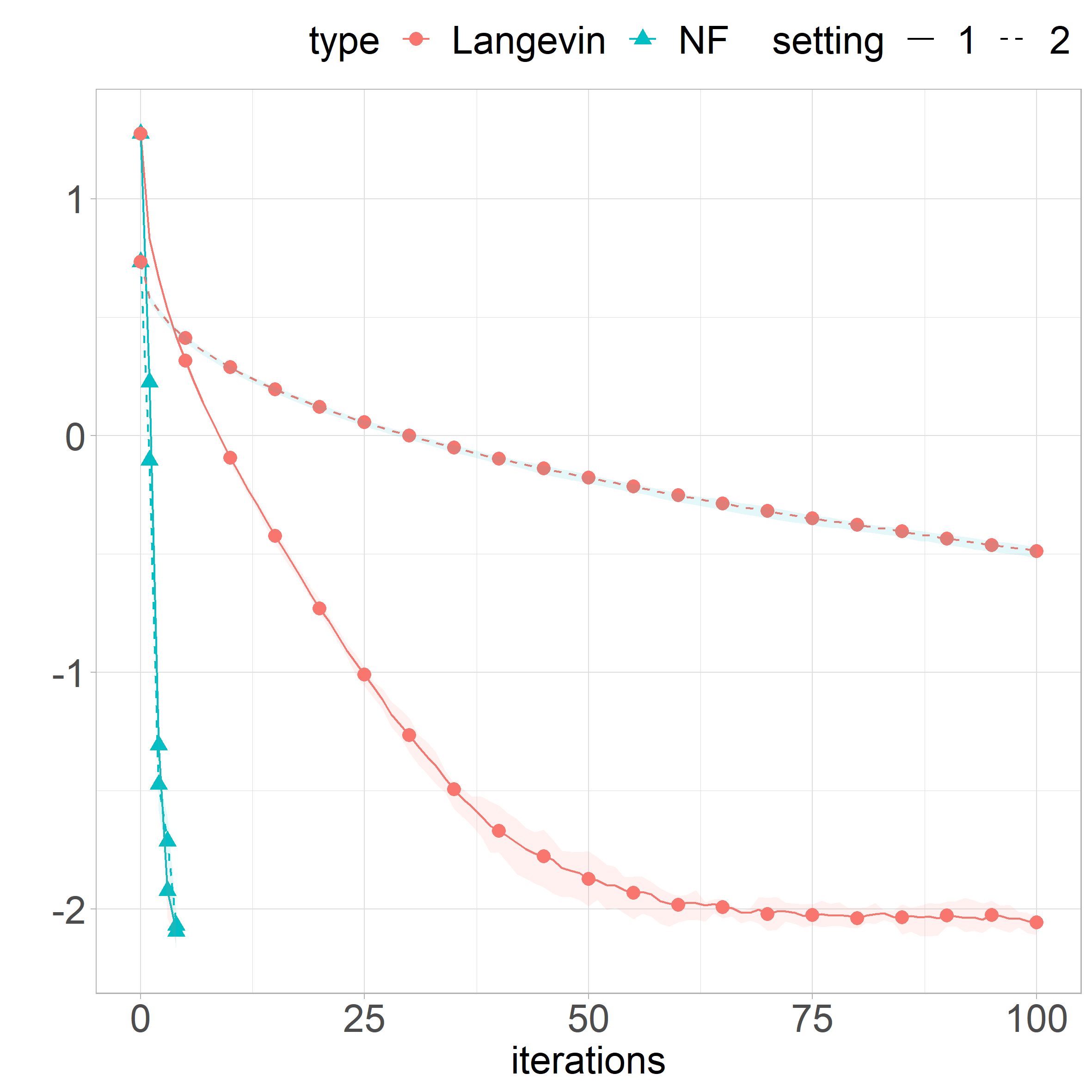}
      \caption{KL divergence}
   \end{subfigure}
   \caption{Numerical accuracy (with error bars) versus the iteration count $k$. For plots (a) and (b) (NPMLE), we report $\log\big(\m L_n(\rho_k) - \m L_n(\widehat\rho\,) \big)$, where $\widehat \rho\,$ is the (numerically) optimal solution; and for (c), we report $\log W_1(\rho_k,\,\pi)$, with $\pi$ denoting the target distribution. All results are based on $10$ independent trials.}\label{fig:convergence}
\end{figure*}

\noindent {\bf Gaussian location mixture model.}
We consider a two-dimensional Gaussian location mixture model, where for $\theta\in\Theta =\mb R^2$,  the conditional distribution $p(\,\cdot\,|\,\theta)$ in the NPMLE formulation~\eqref{eqn: NPMLE} is the density of $\m N(\theta, \, I_2)$. We set the true (mixing) distribution of $\theta$ to be a bimodal two moon distribution~\cite{Stimper2023}, and use a sample size of $n=5000$. For our method (\texttt{NF}), we also implement the stochastic variant (\texttt{NF\_s}) by using a randomly subsampled mini-batch of size $m=500$ to compute the stochastic gradient during the training of the normalizing flow.
We compare our method with the previously mentioned \texttt{WFR} and \texttt{KW} methods.
%Additional implementation details for this, as well as the other two examples, can be found in Appendix~\ref{app:computation}. 
Figure~\ref{fig:convergence}(a) displays the difference between $\mathcal{L}_n(\rho_k) - \mathcal{L}_n(\widehat\rho\,)$ (in a logarithmic scale) as a function of the iteration count $k$, where $\widehat \rho$ is a numerically optimal solution obtained by running our method for a sufficient number of iterations. As can be seen, for this relatively simple problem, all methods exhibit rapid convergence. Our method with exact gradient descent (\texttt{NF}) achieves the fastest convergence, while our stochastic variant (\texttt{NF\_s}) shows slower convergence compared to WFR.

\noindent {\bf Gaussian location scale mixture model.}
Our second example is a $d$-dimensional Gaussian location scale mixture model, where for $\theta=(\mu,\,\sigma^2)\in \Theta=\mb R^d\times\mb R^d_{+}$, the conditional distribution $p(\,\cdot\,|\,\theta)$ is the density of $\m N(\mu, \Sigma)$, with $\Sigma = \diag(\sigma_1^2, \cdots, \sigma_d^2)$; and the true (mixing) distribution $P^\ast=P^\ast_{\mu}\otimes P^\ast_{\sigma^2}$. We consider two settings, one with $d=2$ and the other with $d=3$, both with a sample size of $n=5000$. Since \texttt{WFR} is applicable only to the Gaussian location mixture model, our comparison is limited to $\texttt{NF}$, $\texttt{NF\_s}$ and \texttt{KW}. Figure~\ref{fig:convergence}(b) shows the results. As we can observe, the necessity for KW to discretize the parameter space into equally spaced grids results in a non-vanishing bias term attributed to this discretization. This bias becomes larger as the dimensionality increases, owing to the curse of dimensionality. In contrast, our methods, including the stochastic variants, are relatively robust against dimensionality increase, with the numerical error keeps decreasing as the iteration count $k$ increases.

\noindent {\bf Bayesian sampling.}
In this example, we set the true target distribution to have density $\pi(\theta) \propto e^{-\frac{1}{2\alpha}\|\theta\|^{2\alpha}}$ for $\theta\in\Theta=\mb R^2$, which is known  only up to a normalization constant. The corresponding objective functional is $\m F(\rho) = \int \frac{1}{2\alpha}\|\theta\|^{2\alpha}\,\dd\rho(\theta) + \int\rho\log \rho$. We consider two settings: $\alpha=2$ and $\alpha=3$. Note that $\alpha=2$ corresponds to a Lipschitz continuous potential function $\frac{1}{2\alpha}\|\cdot\|^{2\alpha}$, as required by explicit discretization methods, while $\alpha=3$ violates this condition. We compare our method (\texttt{NF}) with the (unadjusted) \texttt{Langevin} method as a representative explicit discretization method. As illustrated in Figure~\ref{fig:convergence}(c), \texttt{NF} converges very rapidly for both values of $\alpha$, in line with the prediction of our Theorem~\ref{thm: discrete_regular} under $\lambda=1$. In contrast, \texttt{Langevin} exhibits significantly slower convergence, especially when $\alpha=3$. For \texttt{Langevin}, we manually selected the step size as the largest one that avoids divergence, ensuring the fastest convergence possible.

\begin{comment}
The objective is to minimize the following functional:
\begin{equation} \label{obj: KL}
    \begin{aligned}
    \m L_n(\rho) &=  \KL \left[ \rho \,\|\, \frac{1}{C} \exp\left( - \frac{\| \theta \|^{2 \alpha}}{2 \alpha} \right) \right] - \log C \\
                 &= \int \log (\rho(\theta)) \rho(\theta) d\theta + \int \frac{\|  \theta \|^{2\alpha}}{2 \alpha} \rho(\theta) d\theta \\
                 & = \sum^M_{j=1} \left\{ \log(\rho(\theta_j)) + \frac{\| \theta_j \|^{2\alpha}}{2\alpha} \right\},
\end{aligned}
\end{equation}
where $C = \int \int \exp\left(-\frac{(\theta_1^2 + \theta_2^2)^{\alpha}}{2\alpha}\right) d \theta_1 d\theta_2$.
\end{comment}

\noindent {\bf Impact of IKLPD step size $\tau_k$.}
To examine the impact of the step size $\tau_k$ on the IKLPD algorithm, we conduct additional numerical experiments to compare the inner loop iterations using a first-order optimization algorithm and the outer loop iterations of IKLPD under varying constant step sizes $\tau_k\equiv \tau$, using a stopping criterion based on the variance of the first variation. Due to space constraints, we defer the detailed results to Appendix~\ref{app:computation}.
%use RMSProp~\cite{hinton2012neural} to train the normalizing flow in solving each implicit step sub-problem~\eqref{eqn: IKLPD}; RMSProp tends to be more stable and requires less fine-tuning than Adam~\cite{kingma2014adam}. 
From these results, we can see that for small (large) values of $\tau$, the sub-problem~\eqref{eqn: IKLPD} becomes easier (harder) as it needs fewer (more) inner loop iterations to meet the stopping criterion. However, since the progress made by each IKLPD step is smaller (larger), the total number of outer loop iterations increases (decreases). Furthermore, when $\tau$ surpasses a certain threshold, the inner loop fails to converge within a prescribed number of iterations. Upon closer examination of these non-converging cases, we identified two primary reasons for this failure, either the inner loop is trapped in a local minimum that is not global for problem~\eqref{eqn: IKLPD}, or it is unable to meet the stopping criterion within the prescribed number of iterations. This empirical observation aligns with our discussion following equation~\eqref{eqn: IKLPD}. 
We also defer a concluding discussion of this work to Appendix~\ref{app:dis}.

\bibliographystyle{plain}
\bibliography{ref}

\newpage
\appendix
\begin{center}
{\bf\Large Supplementary Materials: Appendix}
\end{center}
\numberwithin{equation}{section}

\newtheorem{theoremA}{Theorem}
\renewcommand{\thetheoremA}{A\arabic{theoremA}}
\newtheorem{lemmaA}[theoremA]{Lemma}
\renewcommand{\thelemmaA}{A\arabic{lemmaA}}

In this supplementary material, we provide more background knowledge and useful results about optimizing a functional over the space of all probability distributions, including some of their connections with optimal transport (e.g., displacement convexity) and a more broader framework of the proximal mirror descent algorithm that allows an extension from the KL divergence to general Bregman divergences. We also review additional literature on mirror descent and stochastic (proximal) mirror descent in the Euclidean space, along with some further optimization algorithms on the space of all probability distributions. Moreover, we detail the implementation of the algorithms and the numerical experiments showcased in the main paper, and we provide additional numerical results. Finally, this supplementary material includes all the proofs related to the main theoretical results presented in the paper, including the verification of the $L_2$-convexity of the target functionals in both the NPMLE and the Bayesian posterior computation examples.

\section{Backgrounds and Facts}
In this appendix, we provide additional background and facts related to optimization over the space of probability distributions, the non-parametric maximum likelihood estimation (NPMLE), and extensions of our developments to more general proximal mirror descent algorithms. However, as we mentioned in the main paper, our considered KL is often better aligned with the information geometry inherent to statistical problems; see, for example, the two motivating examples of NPMLE and Bayesian posterior computation considered in the paper.

\subsection{Some definitions and consequences}\label{app:more_def}
\paragraph{First variation.} We first provide a formal definition of first variations; more details can be found, e.g., in Section 7.2 of~\cite{santambrogio2015optimal}. Let $\m F:\ms P(\Theta)\to \mb R$ be a lower semi-continuous functional and $\ms P^r(\Theta)$ denote the set of all probability measures absolutely continuous with respect to the Lebesgue measure on $\Theta$. A measure $\rho\in\ms P(\Theta)$ is called \emph{regular} for $\m F$ if $\m F\big(\varepsilon\rho + (1-\varepsilon)\rho'\big) < \infty$ for all $\varepsilon\in(0, 1)$ and any $\rho'\in\ms P^r(\Theta)$ that has compact support and bounded density. If $\rho$ is regular for $\m F$, one can define the \emph{first variation} of $\m F$ at $\rho$ as a map $\frac{\delta\m F}{\delta\rho}(\rho): \Theta\to\mb R$ such that for any perturbation $\chi=\rho' - \rho$, where $\rho'\in\ms P^r(\Theta)$ has bounded density and compact support,
\begin{align*}
\frac{\dd}{\dd\varepsilon}\m F(\rho+\varepsilon\chi)\bigg|_{\varepsilon=0} = \int_{\Theta}\frac{\delta\m F}{\delta\rho}(\rho)\,\dd\chi.
\end{align*}

\paragraph{Pushforward.} Let $\m X$ be a measurable space, and $T:\m X\to\Theta$ be a measurable function. The \emph{pushforward} $\rho$ of a measure $\mu\in\ms P(\m X)$ under $T$, denoted by $\rho = T_\#\mu$, is a measure on $\Theta$ defined as
\begin{align*}
    \rho(A) = T_\#\rho(A) = \rho\big(T^{-1}(A)\big),\quad\forall A\subset \Theta\,\,\mx{is measurable}.
\end{align*}

\paragraph{$\mathbf{W_p}$ distance and coupling.}
Let $\pi_0, \pi_1: \Theta\times\Theta\to\Theta$ be the projection functions defined as $\pi_0(\theta, \theta') = \theta$ and $\pi_1(\theta, \theta') = \theta'$, and define $\pi_t = (1-t)\pi_0 + t\pi_1$. For any $\rho, \rho'\in\ms P(\Theta)$, $\gamma$ is called a \emph{coupling} of $\rho$ and $\rho'$, denoted by $\Pi(\rho, \rho')$, if $(\pi_0)_\#\gamma = \rho$ and $(\pi_1)_\#\gamma = \rho'$. Then, the $W_p$ distance between $\rho$ and $\rho'$ is defined as
\begin{align}\label{eqn: Wp-dist}
W_p^p(\rho, \rho') = \inf_{\gamma\in\Pi(\rho, \rho')} \int_{\Theta\times\Theta}\|\theta-\theta'\|^p\,\dd\gamma(\theta,\theta')
\end{align}
By the above definition, it is clear that the $W_p$ distance can also be defined through
\begin{align*}
W_p^p(\rho, \rho') = \inf_{\theta\sim\rho, \theta'\sim\rho'}\mb E\big[\|\theta-\theta'\|^p\big].
\end{align*}
We say $\gamma^\ast$ is an optimal coupling of $\rho$ and $\rho'$, denoted by $\Pi_o(\rho, \rho')$, if
\begin{align*}
W_2^2(\rho, \rho') = \int_{\Theta\times\Theta}\|\theta-\theta'\|^2\,\dd\gamma^\ast(\theta,\theta'),
\end{align*}
i.e. the infimum in~\eqref{eqn: Wp-dist} is achieved at $\gamma^\ast$. 

\paragraph{Wasserstein Geodesics, and (strong) convexity along geodesics.}
A (constant-speed Wasserstein) geodesics connecting $\rho_0$ and $\rho_1$ is a curve $\{\rho_t: 0\leq t\leq 1\}$ on $\ms P(\Theta)$, such that there exists $\gamma^\ast\in\Pi_o(\rho_0, \rho_1)$ satisfying $\rho_t = (\pi_t)_\#\gamma^\ast$. A functional $\m F$ is \emph{$\lambda$-strongly convex along geodesics} if
\begin{align*}
\m F(\rho_t) \leq (1-t)\m F(\rho_0) + t\m F(\rho_1) - \frac{\lambda}{2}\, t(1-t)\,W_2^2(\rho_0, \rho_1)
\end{align*}
holds for any geodesics $\{\rho_t: 0\leq t\leq 1\}$ and $t\in[0, 1]$.

\paragraph{Non-convexity along geodesics of NPMLE.}
Recall that given $n$ observations $X^n = (X_1, \cdots, X_n)$, NPMLE is defined as
\begin{equation}\label{eqn: NPMLE_problem}
\wht P_n = \argmin_{\rho\in\ms P(\Theta)}\m L_n(\rho),\quad\mx{where}\quad
\m L_n(\rho)\coloneqq -\frac{1}{n}\sum_{i=1}^n\log \Big(\int_\Theta p(X_i\,|\,\theta)\,\dd\rho(\theta)\Big) 
\end{equation}
The objective functional $\m L_n$ may not be geodesically convex. Consider $p(\cdot\,|\,\theta) = \m N(\theta, 1)$, $\rho_0 = \m N(0, 1)$ and $\rho_1 = \m N(0, 25)$. Since both $\rho_0$ and $\rho_1$ are Gaussian distributions, the optimal transport map from $\rho_0$ to $\rho_1$ is $T(\theta) = 5\theta$, and thus the geodescis connecting $\rho_0$ and $\rho_1$ is $\rho_t = \m N(0, (1+4t)^2)$. In this case, we have 
\begin{align*}
\m L_n(\rho_t) = \frac{\sum_{i=1}^nX_i^2}{2[1+(1+4t)^2]} + \frac{n}{2}\log\big[2\pi\big(1 + (1+4t)^2\big)\big].
\end{align*}
%This function is not convex with respect to $t$ on $[0, 1]$ when $n=1$ and $X_1 = 0$. 
When $\rho^\ast = \delta_0$ is the point mass, $\frac{1}{n}\m L_n(\rho_t) \to \frac{1}{2}\log[2\pi(1+(1+4t)^2)] + \frac{1}{2[1+(1+4t)^2]}$ by law of large numbers. This function is not convex on $[0, 1]$. Similar result of non-convexity is numerically verified by~\cite{yan2023learning}.
%%%%%%%%%%%%%%%%%%%%%%%%%%%%%%%%%%%%%%%%%%%%%%%%%%%%%%%%%%%%%%%%%%%%%%%%
%%%%%%%%%%%%%%%%%%%%%%%%%%%%%%%%%%%%%%%%%%%%%%%%%%%%%%%%%%%%%%%%%%%%%%%%
\subsection{Extension from KL to general Bregman divergences}\label{app:general_divergence}
Let $\Phi:\ms P(\Theta)\to\mb R\cup\{+\infty\}$ be a ($L_2$-)convex functional with first variation $\frac{\delta\Phi}{\delta\rho}$. Define the associated Bregman divergence as
\begin{align*}
D_\Phi(\rho, \rho') \coloneqq \Phi(\rho) - \Phi(\rho') - \int_\Theta\frac{\delta\Phi}{\delta\rho}(\rho')\,\dd(\rho - \rho').
\end{align*}
Bregman divergence is always nonnegative due to the convexity of $\Phi$. In the implicit proximal mirror descent algorithm (with respect to the Bregman function $\Phi$) on the space of all probability distributions, given $\rho_0\in\ms P(\Theta)$ such that $\Phi(\rho_0)$ is finite, we iteratively solve 
\begin{align*}
\rho_k = \argmin_{\rho\in\ms P(\Theta)}\m F(\rho) + \frac{1}{\tau_k}D_\Phi(\rho, \rho_{k-1}), \quad k\geq 1.
\end{align*}
When $\Phi(\rho) = \int\rho\log\rho$ for $\rho\in\ms P^r(\Theta)$ and $\Phi(\rho) = +\infty$ for $\rho\notin\ms P^r(\Theta)$, it is easy to check that $D_\Phi(\rho, \rho') = \KL(\rho\,\|\,\rho')$.

\paragraph{$\chi^2$-divergence is not a Bregman divergence.} Recall that the $\chi^2$-divergence between two probability distributions are
\begin{align*}
\chi^2(\rho, \rho') = \int_\Theta \Big(\frac{\rho(\theta)}{\rho'(\theta)}-1\Big)^2\,\dd\rho(\theta).
\end{align*}
If there exists a convex functional $\Phi$, such that $\chi^2(\rho, \rho') = D_\Phi(\rho, \rho')$ for all $\rho, \rho'\in\ms P^r(\Theta)$. Let $\rho' = \m N(0, 1)$, we have
\begin{align*}
\Phi(\rho) &= \chi^2(\rho, \rho') + \Phi(\rho') + \int_\Theta\frac{\delta\Phi}{\delta\rho}(\rho')\,\dd(\rho-\rho')\\
&= \Phi(\rho') + \int_\Theta\frac{\delta\Phi}{\delta\rho}(\rho')\,\dd(\rho-\rho') + \int_\Theta\Big(\frac{\rho(\theta)}{\rho'(\theta)}-1\Big)^2\frac{\rho(\theta)}{\rho'(\theta)}\,\dd\rho'.
\end{align*}
Note that the first two terms are linear in $\rho$, while the last term is not convex with respect to $\rho$. Therefore, there is no convex functional $\Phi$ such that $\chi^2(\rho,\rho') = D_\Phi(\rho, \rho')$.

\paragraph{Renyi's $\alpha$-divergence is not a Bregman divergence.}
Recall that Renyi's $\alpha$-divergence~\cite{li2016renyi} is defined as
\begin{align*}
    R_\alpha(\rho, \rho') = \frac{1}{\alpha-1}\log\int_\Theta\rho(\theta)^\alpha\rho'(\theta)^{1-\alpha}\,\dd\theta, \alpha\in(0, 1).
\end{align*}
If there exists a convex functional $\Phi$, such that $R_\alpha(\rho,\rho') = D_\Phi(\rho, \rho')$ for all $\rho, \rho'\in\ms P^r(\Theta)$. Then
\begin{align}\label{eqn: Renyi}
\Phi(\rho) = \Phi(\rho') + \int_\Theta\frac{\delta\Phi}{\delta\rho}(\rho')\,\dd(\rho-\rho') + \frac{1}{\alpha-1}\log\int_\Theta\rho(\theta)^\alpha\rho'(\theta)^{1-\alpha}\,\dd\theta.
\end{align}
Taking the first variation on both sides of~\eqref{eqn: Renyi} yields
\begin{align*}
\frac{\delta\Phi}{\delta\rho}(\rho)(\theta) = \frac{\delta\Phi}{\delta\rho}(\rho')(\theta) + \frac{1}{\alpha-1}\cdot\frac{\alpha\rho(\theta)^{\alpha-1}\rho'(\theta)^{1-\alpha}}{\int_\Theta\rho(\theta)^\alpha\rho'(\theta)^{1-\alpha}\,\dd\theta}.
\end{align*}
Taking this expression of $\frac{\delta\Phi}{\delta\rho}(\rho)$ back to~\eqref{eqn: Renyi} yields contradiction.

%%%%%%%%%%%%%%%%%%%%%%%%%%%%%%%%%%%%%%%%%%%%%%%%%%%%%%%%%%%%%%%%%%%%%%%%
%%%%%%%%%%%%%%%%%%%%%%%%%%%%%%%%%%%%%%%%%%%%%%%%%%%%%%%%%%%%%%%%%%%%%%%%
%%%%%%%%%%%%%%%%%%%%%%%%%%%%%%%%%%%%%%%%%%%%%%%%%%%%%%%%%%%%%%%%%%%%%%%%
\section{More Literature Review and Concluding Discussion}
In this appendix, we review more related literature and provide the deferred concluding discussion of this work.

\subsection{More related works}
\paragraph{Mirror descent.}
Mirror descent for convex optimization in the Euclidean space was originally proposed by~\cite{nemirovskij1983problem}. It is established that the mirror descent algorithm achieves a $O(k^{-1/2})$ convergence rate when dealing with a non-smooth convex objective function that possesses a uniformly bounded subgradient; this rate can be enhanced to $O(k^{-1})$ when the function is relatively smooth with respect to the Bregman divergence~\cite{bauschke2017descent}. When the objective function is convex and has Lipschitz gradients, \cite{krichene2015accelerated} demonstrates that the accelerated mirror descent converges at a rate of $O(k^{-2})$. For additional details on mirror descent algorithms in the Euclidean space, we refer the reader to the monographs \cite{beck2017first, lan2020first, wright2022optimization}.

\paragraph{Stochastic proximal (mirror) descent.}
% stochastic proximal descent and stochastic proximal mirror descent
Stochastic proximal descent type algorithms have been shown to be more stable than stochastic gradient type algorithms~\cite{ryu2014stochastic} when optimizing a function in the Euclidean space. However, they have been less extensively studied compared to the latter. Considering a scenario where the random objective function is restricted strongly convex, \cite{ryu2014stochastic} demonstrate that the expected $L_2$-distance between each iterate and the minima of the objective function converges exponentially fast, up to a constant factor.
In cases where the objective function is convex, \cite{asi2019stochastic} establish that the expected value of the objective function evaluated at each iterate approaches its global minimum at a polynomial rate. This is under the condition that the $L_2$-norm of the derivative of the stochastic objective function has uniformly bounded expected values. In contrast, \cite{bertsekas2011incremental} shows that in a bounded search space with a one-sided Lipschitz continuous objective function, the expected number of iterations needed to achieve $\varepsilon$-accuracy, up to a fixed constant, is of the order $O(\varepsilon^{-1})$. When it comes to stochastic proximal mirror descent, \cite{davis2018stochastic} prove a polynomial convergence rate for the expected value of the objective function across iterations, given a similar condition of one-sided Lipschitz continuity with respect to the square root of Bregman divergence.

\paragraph{Algorithms for optimizing functional on the space of probability distributions.}
% other methods for optimizing functions on probability space
Assuming the log-Sobolev inequality is satisfied, \cite{chizat2022mean} and \cite{nitanda2022convex} demonstrate an exponential convergence rate for minimizing the entropic regularized objective functional across the space of probabilities using mean-field Langevin dynamics. \cite{chizat2022mean} further show that the unregularized objective functional approaches its minimum at a rate of $O(\frac{\log\log t}{\log t})$, achieved by decreasing the regularization parameter at a rate of $O(\frac{1}{\log t})$ through an annealing argument. A similar annealing approach is employed in~\cite{chizat2022trajectory}, transforming trajectory inference problems into functional optimization problems. In a different vein, \cite{kent2021frank} introduce the Frank–Wolfe algorithm in the space of probabilities, inspired by distributionally robust optimization approaches.

\subsection{Concluding discussion of this work}\label{app:dis}
% \ry{TODO: do we want to include this computation time? If we want to include this, maybe try to use 
% RMSprop for optimization since training with RMSprop seems faster?}
% The following table shows the computation time for 1 mirror step (time to finish one outer loop iteration) using NF and the computation time for other methods. Experiment 1 is the 2-Dim location mixture experiment. Experiment 2 is the 2-Dim location scale mixture experiment. Experiment 3 is the 3-Dim location scale mixture experiment. Experiment 4 is the 2-Dim KL divergence experiment. it/s refers to iterations per second and s/it refers to seconds per iteration.

% \begin{table}
% \centering
% \begin{tabular}{|c|c|c|c|c|} \toprule
%     {Exp} & {NF} & {WFR} & {KW} & {Langevin} \\ \midrule
%     1  & 165.2 s/it & 4.1 it/s & 6.8 it/s &   \\
%     2  & 200.9 s/it &   &  9.5 it/s  &     \\
%     3  & 350.7 s/it &   &  3.8 it/s &   \\
%     4  & 40.0 s/it &   &    & 287 it/s   \\ \midrule
% \end{tabular}
% \end{table}

% The KW method depending on the girding scheme is extremely inefficient in high dimensional problems. The number of grid points grows with the power of dimension. WFR method has several limitations: it cannot be applied to other functional optimization problems and may need more particles for extremely unbalanced data and data with multi-modes. 

% We use the NVIDIA Tesla T4 GPU. To further reduce the run time for our method, one way is to make the convergence speed in the inner loop faster by the improvement of the algorithm. Another way is to use more power GPUs like NVIDIA A100 GPU.
In this work, we proposed an implicit KL proximal descent (IKLPD) algorithm, which discretized a continuous-time gradient flow relative to the Kullback-Leibler divergence for minimizing a convex functional defined over the space of all probability distributions. We utilized the proposed method to address two statistical applications, specifically, non-parametric maximum likelihood estimation (NPMLE) and Bayesian posterior computation. We demonstrated that our implicit method has multiple advantages compared to its explicit counterpart: 1. it did not require a Lipschitz $L_2$-gradient, thus allowing for larger step sizes and fewer iterations to converge; 2. it was more robust and did not need kernel density estimation in order to approximately compute the $L_2$-gradient as in the explicit method, making the explicit method suffer from the curse of dimensionality. Computationally, we proposed a numerical method based on normalizing flow to implement IKLPD, and utilized a teacher-student architecture to maintain constant space complexity. Conversely, our numerical method could also be viewed as a novel approach that sequentially trains a normalizing flow for minimizing a convex functional with strong theoretical guarantees. Some potential future directions include: 1. applying and analyzing IKLPD for other more complicated statistical applications, such as training Bayesian neural networks and variational inference with structural constraints; 2. extending the KL to a general Bregman divergence and identifying examples where using a particular Bregman divergence is beneficial; 3. analyzing the optimization landscape of the normalizing flow for solving each implicit step optimization problem in the IKLPD.

%%%%%%%%%%%%%%%%%%%%%%%%%%%%%%%%%%%%%%%%%%%%%%%%%%%%%%%%%%%%%%%%%%%%%%%%
%%%%%%%%%%%%%%%%%%%%%%%%%%%%%%%%%%%%%%%%%%%%%%%%%%%%%%%%%%%%%%%%%%%%%%%%
%%%%%%%%%%%%%%%%%%%%%%%%%%%%%%%%%%%%%%%%%%%%%%%%%%%%%%%%%%%%%%%%%%%%%%%%

\section{More Computational Details and Numerical Results}\label{app:computation}
In this appendix, we provide more details about our use of the normalizing flow for implementing the proposed IKLPD algorithm and the setup of the three numerical examples in the main paper. We also provide additional numerical results about: 1. the impact of step size $\tau_k$ on the inner/outer loop convergence of the IKLPD algorithm; 2. the teacher-student architecture for maintaining a fixed storage budget when composing short normalizing flows as the number of (outer loop) iterations increases. We conducted all experiments using the NVIDIA Tesla T4 GPU available on Google Colab.

\subsection{Implementation via normalizing flows}

Recall that the implicit KL proximal descent (IKLPD) algorithm minimizes the objective functional $\m F$ by iteratively solving the subproblem
\begin{align}\label{eqn: IKLPD_app}
\rho_{k} = \argmin_{\rho\in\ms P(\Theta)} \m F(\rho) + \frac{1}{\tau_k}\KL(\rho\,\|\,\rho_{k-1}), \,\, k\geq 1,
\end{align} 
with  an initialization $\rho_0\in\ms P^r(\Theta)$ and step size $\{\tau_k:\,k\geq 1\}$. The main idea of using normalizing flow (NF) to solve~\eqref{eqn: IKLPD_app} is to express $\rho_k$ through a map $T^{(k)}:\Theta\to\Theta$ and the initialization $\rho_0$, which can be easily sampled from, by letting $\rho_k = T^{(k)}_\#\rho_0$. A closed-form expression of the density of $\log \rho_k$ derived from the normalizing flow enables exact computation of $\KL(\rho_k\,\|\,\rho_{k-1})$ through $T^{(k)}$ and $T^{(k-1)}$. Specifically, when the map $T^{(k)}$ is invertible and differentiable (which is satisfied by NF), if we denote the Jacobian matrix of $T^{(k)}$ by $J_{T^{(k)}}$, then the change of variable formula implies
\begin{align}\label{eqn: change_of_variable}
%\rho_k(\theta^{(k)}) = \rho_0 (\theta^{(0)}) \big| \det J_{T^{(k)}}(\theta^{(0)}) \big|^{-1}, \quad \mx{where}\quad \theta^{(k)} = T^{(k)}(\theta^{(0)}).  
\rho_k(\theta) = \rho_0\big((T^{(k)})^{-1}(\theta)\big)\big|\det J_{T^{(k)}}\big((T^{(k)})^{-1}(\theta)\big)\big|^{-1}
\end{align}
In practice, the reparametrization trick can be employed to simplify the numerical computation. Concretely, let $\tilde\rho_k$ be empirical distribution of $M$ particles $\theta_1^{(k)}, \cdots, \theta_M^{(k)}$ sampled from $\rho_k = T^{(k)}_\#\rho_0$. By applying~\eqref{eqn: change_of_variable}, the objective functional in~\eqref{eqn: IKLPD_app} can be approximated by
\begin{align}
\begin{aligned}\label{eqn: subproblem}
%&\quad\,\m F (T_\#^{(k)}\rho_0) + \frac{1}{\tau_k}\KL(T_\#^{(k)}\rho_0\,\|\,T_\#^{(k-1)}\rho_0)\\
%&= 
\m F_k \coloneqq \m F(\tilde\rho_k) + \frac{1}{M\tau_k}\sum_{j=1}^M \bigg[\log\frac{\rho_0\big((T^{(k)})^{-1}(\theta_j^{(k)})\big)}{\rho_0\big((T^{(k-1)})^{-1}(\theta_j^{(k)})\big)} - \log\frac{\big|\det J_{T^{(k)}}\big((T^{(k)})^{-1}(\theta_j^{(k)})\big)\big|}{\big|\det J_{T^{(k-1)}}\big((T^{(k-1)})^{-1}(\theta_j^{(k)})\big)\big|}\bigg].
\end{aligned}
\end{align}
Computing~\eqref{eqn: subproblem} requires efficient computation of the inverse maps of $T^{(k)}$ and $T^{(k-1)}$, which makes NF an appropriate choice for modeling these maps. An NF model with length $L$ is a map composed of $L$ invertible transformations $T_1,\cdots, T_L$, the inverse of which can be easily calculated. The invertibility of the NF model is guaranteed by the invertibility of these transformations.
%which are often constructed by invertible neural networks. The invertibility of the neural networks can be achieved by making the neural network differentiable and strictly monotone. 
We choose the NF model with Real-NVP architecture~\cite{dinh2016density}, where the transformations $\{T_l: 1\leq l\leq L\}$ are affine coupling blocks.
See Algorithm~\ref{alg:cap1} for a summary of this straightforward implementation of IKLPD using NF via the Adam optimizer. In Appendix~\ref{app:composition_NF} below, we present a computationally efficient method for sequentially stacking local, short normalizing flows, learned within each inner-loop iteration, to form a global, layered normalizing flow for approximating the target solution $\rho^\ast$. Additionally, we conduct a numerical experiment to compare this compositional scheme via a teacher-student architecture with Algorithm~\ref{alg:cap1}, which re-trains a long normalizing flow for each subproblem.

\begin{algorithm}[t]
\caption{Implementing IKLPD with Normalizing Flows}\label{alg:cap1}
\begin{algorithmic}
\Require data $X^n = (X_1, \cdots, X_n)$; initialized NF model $T^{(0)}$; number of particles $M$; initialization $\rho_0$; learning rate of Adam optimizer $\gamma$; step size $\tau$; number of outer iterations $N_1$; number of inner iterations $N_2$; the decay factor of the learning rate in Adam $\beta_1$;  the increase factor of the step size $\beta_2$
%length of the normalizing flow $L$; $T^{(1)}$: a normalizing flow model with length $L$ that is initialized as $T^{(1)}_\# \rho_0 = \rho_0$.
\algrule
\State Sample $M$ particles $\underaccent{\tilde}{\theta}^{(0)} = [\theta^{(0)}_1, \cdots, \theta^{(0)}_M]$ from $\rho_0$.
\For{$k$ = $1$ to $N_1$}
    \State Initialize $T^{(k)}$ as $T^{(k-1)}$.
    \State Compute the learning rate $\gamma_k = \gamma\cdot \beta_1^{k-1}$.
    \State Compute the step size $\tau_k = \tau\cdot \beta_2^{k-1}$.
    \For{$r$ = $1$ to $N_2$}
        \State $\underaccent{\tilde}{\theta}^{(k)} = T^{(k)}(\underaccent{\tilde}{\theta}^{(0)})$.
        \State Compute the loss $\m F_k = \m F_k(\underaccent{\tilde}{\theta}^{(k)}, \tau_k)$ in~\eqref{eqn: subproblem} with $\underaccent{\tilde}{\theta}^{(k)}$.
        \State Update $T^{(k)}$ based on the loss $\m F_k$ using Adam optimizer with learning rate $\gamma_k$.
    \EndFor
\EndFor
\end{algorithmic}
\end{algorithm}

\begin{comment}
\State compute the loss $\m F (T_\#^{(2)}\rho_0)$ based on $\underaccent{\tilde}{\theta}^{(2)}$(, and $X^n$ if needed).
\State compute $\frac{1}{\tau}\KL(T_\#^{(2)}\rho_0\,\|\,T_\#^{(1)}\rho_0)$ based on $\underaccent{\tilde}{\theta}^{(2)}$, $T^{(1)}$, $T^{(2)}$.
\end{comment}

\subsection{More implementation details of three examples in the paper}\label{app: experiment_details}

\paragraph{Gaussian location mixture model.}
We consider a two-dimensional Gaussian location mixture model with the parameter space $\Theta = \mb R^2$. The true distribution $P^\ast$ of the parameter $\theta$ is a bimodal two moon distribution~\cite{Stimper2023}. The conditional distribution in NPMLE is $p(\cdot\,|\,\theta) = \m N(\theta, I_2)$, and $n=5000$ samples are generated from the model
\begin{align}\label{eqn: data_generate}
\theta_i \stackrel{\rm{i.i.d.}}{\sim} P^\ast \quad\mx{and}\quad X_i\,|\,\theta_i\sim\m N(\theta_i, I_2), \quad i=1, 2, \cdots, n.
\end{align}
In our method, the NF model consists of $30$ affine coupling blocks and each block contains two hidden layers with width $256$. The initialization is $\rho_0=\m N(0, 4I_2)$, and $M=3000$ particles are generated to approximate the probability measure $\rho_k$ in each iteration. The outer iteration is run $N_1 = 25$ times with the step size $\tau_k=5 \times \beta_2^{k-1}$ where the increase factor is $\beta_2 = 1.15$. In the $k$-th outer iteration, the subproblem~\eqref{eqn: subproblem} with $\m F = \m L_n$ defined in~\eqref{eqn: NPMLE_problem} is optimized via Adam optimizer with the initialized learning rate $\gamma_k = 10^{-4}\beta_1^{k-1}$ and the rate decay factor $\beta_1 = 0.912$ for at most $N_2 = 1000$ inner iterations. The inner loop stops early if the $L_2$-norm of the gradient of the parameters in the NF model $T^{(k)}$ reaches the threshold $10^{-4}$ or stops decreasing for $200$ consecutive inner iterations.

In the stochastic variant of NF, a randomly subsampled mini-batch of size $m = 500$ from samples $X^n$ is used to compute the stochastic gradient during the training of the NF. Different from the deterministic NF, the increase factor is $\beta_2 = 1$, and the initialized learning rate in Adam optimizer is $1/(1+k/27)$, which decays along outer iterations. All other settings are same as the ones in the deterministic NF model.

In this experiment, our methods are compared with the KW method~\cite{koenker2014convex} and the Wasserstein--Fisher--Rao (WFR) gradient flow~\cite{yan2023learning}. In the KW method, the probability measure is approximated by a discrete probability distribution supported on a fixed grid. Each grid point can be viewed as a particle with a fixed location, and the goal is to minimize $\m L_n$ by finding the optimal weights of these particles, which can be achieved by applying Algorithm 2 in~\cite{yan2023learning}; this algorithm updates the weights of the particles by explicitly discretizing the Fisher--Rao gradient flow. On the other hand, both the locations and the weights are updated in WFR method by discretizing the WFR gradient flow through particles.

In both of these two methods, the step size is $\tau=1$, as it is the largest step size to ensure that these methods converge. In the KW method, by letting $L = \| X^n \|_\infty$, probability distributions are approximated by a discrete probability distribution supported on a fixed and equally spaced grid on $[-L,L]^2$ with total $3025$ grid points, and the mass on each grid is updated to minimize the functional loss $\m L_n$ via Algorithm 2 in~\cite{yan2023learning}. In the WFR method, we directly use Algorithm 1 in~\cite{yan2023learning} with the same initialization $\rho_0$ and the number of particles $M$ as in our method.

\paragraph{Gaussian location scale mixture model.}
We consider a $d$-dimensional Gaussian location scale mixture model with parameters $\theta = (\mu, \sigma^2)\in\Theta=\mb R^d\times\mb R_+^d$. The conditional distribution is $p(\cdot\,|\,\theta) = \m N(\mu, \Sigma)$ with $\Sigma = \diag(\sigma_1^2, \cdots, \sigma_d^2)$, and the true joint mixing distribution is $P^\ast = P_\mu^\ast \otimes P_{\sigma^2}^\ast$. We consider two settings. In Setting 1, we let $d=2$ and $P_\mu^\ast$ be the bimodal two moon distribution. In Setting 2, we let $d=3$ and $P_\mu^\ast$ be the tensor product of a bimodal two moon distribution for the first two coordinates of $\mu$ and a standard normal distribution for the last coordinate of $\mu$. In both settings, we set $P_{\sigma^2}^\ast$ to be the joint distribution of $d$ independent $\chi^2$ distributions with degree of freedom $1$, and the sample size is $n=5000$. 

In our methods, we use an NF model with $2d$-dimensional inputs and outputs, where the first $d$ dimensions represent location parameters and the last $d$-dimensions represent scale parameters. The NF model consists of $30$ affine coupling blocks and each block contains two hidden layers with width $64$. The initialization is $\rho_0=\m N(0, 4I_d)$. $M$ particles are generated to approximate the probability measure $\rho_k$ in each iteration, and we choose $M=2041$ in Setting 1 and $M=4096$ in Setting 2. The outer iteration is run $N_1 = 50$ times. All other hyperparameters and the stopping criterion of the inner loop in deterministic NF and stochastic NF are same as in the experiments of Gaussian location mixture models.

In the experiment, our methods are compared with the KW method. By letting $L = \| X^n \|_\infty$, probability distributions are approximated by a discrete probability distribution supported on a fixed and equally spaced grid on $[-L, L]^d \times [0.01, 4]^d$ with total $M=2041$ grid points in Setting 1 and $M=4096$ grid points in Setting 2. The mass on each grid is updated to minimize the functional loss $\m L_n$ via Algorithm 2 in~\cite{yan2023learning} with step size $1$.

\paragraph{Bayesian posterior sampling.} 
The goal is to minimize the KL divergence $\m F(\rho) = \KL(\rho\,\|\,\pi)$, where the target distribution $\pi(\theta)\propto e^{-\frac{1}{2\alpha}\|\theta\|^{2\alpha}}$ is known up to a normalization constant and $\theta\in\Theta = \mb R^2$. We consider two settings with $\alpha=2$ in Setting 1 and $\alpha=3$ in Setting 2. In our method, the NF model consists of $20$ affine coupling blocks and each block contains two hidden layers with width $64$. With initialization $\rho_0=\m N(0, 9I_2)$ in Setting 1 and $\rho_0=\m N(0, 4I_2)$ in Setting 2, $M=1000$ particles are generated to approximate the probability measure $\rho_k$ in each iteration. The outer iteration is run $N_1 = 25$ times with the step size $\tau_k=5$ for all $k\geq 1$ (i.e. the increase factor is $\beta_2 = 1$). In the $k$-th outer iteration, the subproblem~\eqref{eqn: subproblem} with $\m F = \KL(\cdot\,\|\,\pi)$ is optimized via Adam optimizer with the initialized learning rate $\gamma_k = 10^{-4}\beta_1^{k-1}$ and the rate decay factor $\beta_1 = 0.912$ for $N_2 = 1000$ inner iterations. Our method is compared with Langevin dynamics, where $M=1000$ particles are generated from the same initialization as in our method and updated by an explicit discretization of Langevin dynamics,
\begin{align*}
\theta_j^{(k)} = \theta_j^{(k-1)} - \Delta t \big\| \theta_j^{(k-1)} \big\|^{(2\alpha - 2)} \theta_j^{(k-1)} + \sqrt{2 \Delta t}\, u_j^{(k)}, \quad j=1,\cdots, M,
\end{align*}
where $u_j^{(k)}$ are i.i.d. samples generated from $\m N(0, I_2)$. We set $\Delta t = 10^{-2}$ in Setting 1 and $\Delta t = 4 \cdot 10^{-4}$ in Setting 2, as they are the largest $\Delta t$ to ensure that the discretized Langevin dynamics does not diverge and therefore leads to the fastest convergence possible.

\subsection{Impact of IKLPD step size $\tau_k$}\label{app: tau_impact}
Recall that the first variation of $\m L_n$ in the NPMLE problem~\eqref{eqn: NPMLE_problem} at a probability measure $\rho$ is the map
\begin{align} \label{eqn: npmle_first_variation}
    \frac{\delta \m L_n }{\delta \rho}(\rho): \theta \mapsto - \frac{1}{n} \sum^n_{i=1} \frac{p(X_i\,|\,\theta)}{ \int_\Theta p(X_i\,|\,\theta)\,\dd\rho(\theta) }.
\end{align}
For any (local) minimum $\rho$  of $\m L_n$, the first-order optimality condition (FOC) implies that $\frac{\delta\m L_n}{\delta\rho}(\rho)$ is a constant on the support of $\rho$ almost everywhere. In the experiments, we use the variance of first variation $\var_{\theta\sim\rho}\big(\frac{\delta\m L_n}{\delta\rho}(\rho)(\theta)\big)$ to characterize the closeness of $\frac{\delta\m L_n}{\delta\rho}(\rho)$ to a constant. In the $k$-th iteration, this variance at $\rho_k$ can be approximated by the sample variance of
\begin{align*} 
    \bigg\{ - \frac{1}{n} \sum^n_{i=1} \frac{p(X_i\,|\,\theta_j^{(k)})}{ \frac{1}{M} \sum^M_{j=1} p(X_i\,|\,\theta_j^{(k)}) } : 1\leq j\leq M \bigg\}
\end{align*}
given $M$ particles $\theta_1^{(k)}, \cdots, \theta_M^{(k)}$ generated from $\rho_k$. When the sample variance is smaller than a threshold $\zeta$ at some iteration $k$, we choose $\rho_k$ as the final solution of the NPMLE problem.

Similarly, since the first variation $\m L_n(\rho) + \frac{1}{\tau_k}\KL(\rho\,\|\,\rho_{k-1})$ is
\begin{align*}
-\frac{1}{n}\sum_{i=1}^n\frac{p(X_i\,|\,\theta)}{\int_\Theta p(X_i\,|\,\theta)\,\dd\rho(\theta)} + \frac{1}{\tau_k}\log\frac{\rho(\theta)}{\rho_{k-1}(\theta)},
\end{align*}
the variance of this first variation of the subproblem at $\rho_k$ can be approximated by the sample variance of
\begin{align*} 
    \bigg\{ - \frac{1}{n} \sum^n_{i=1} \frac{p(X_i\,|\,\theta_j^{(k)})}{ \frac{1}{M} \sum^M_{j=1} p(X_i\,|\,\theta_j^{(k)}) } + \frac{1}{\tau_k} \log\bigg( \frac{\rho(\theta_j^{(k)})}{\rho_{k-1} (\theta_j^{(k)})} \bigg) : 1\leq j\leq M \bigg\}.
\end{align*}
When this sample variance is smaller than a threshold $\zeta_k$, the inner loop stops and the current $T^{(k)}$ is used to construct the solution $\rho_k$ of the subproblem through $\rho_k = T^{(k)}_\#\rho_0$.

\begin{figure}[t]
\centering
     \includegraphics[scale = 0.45]{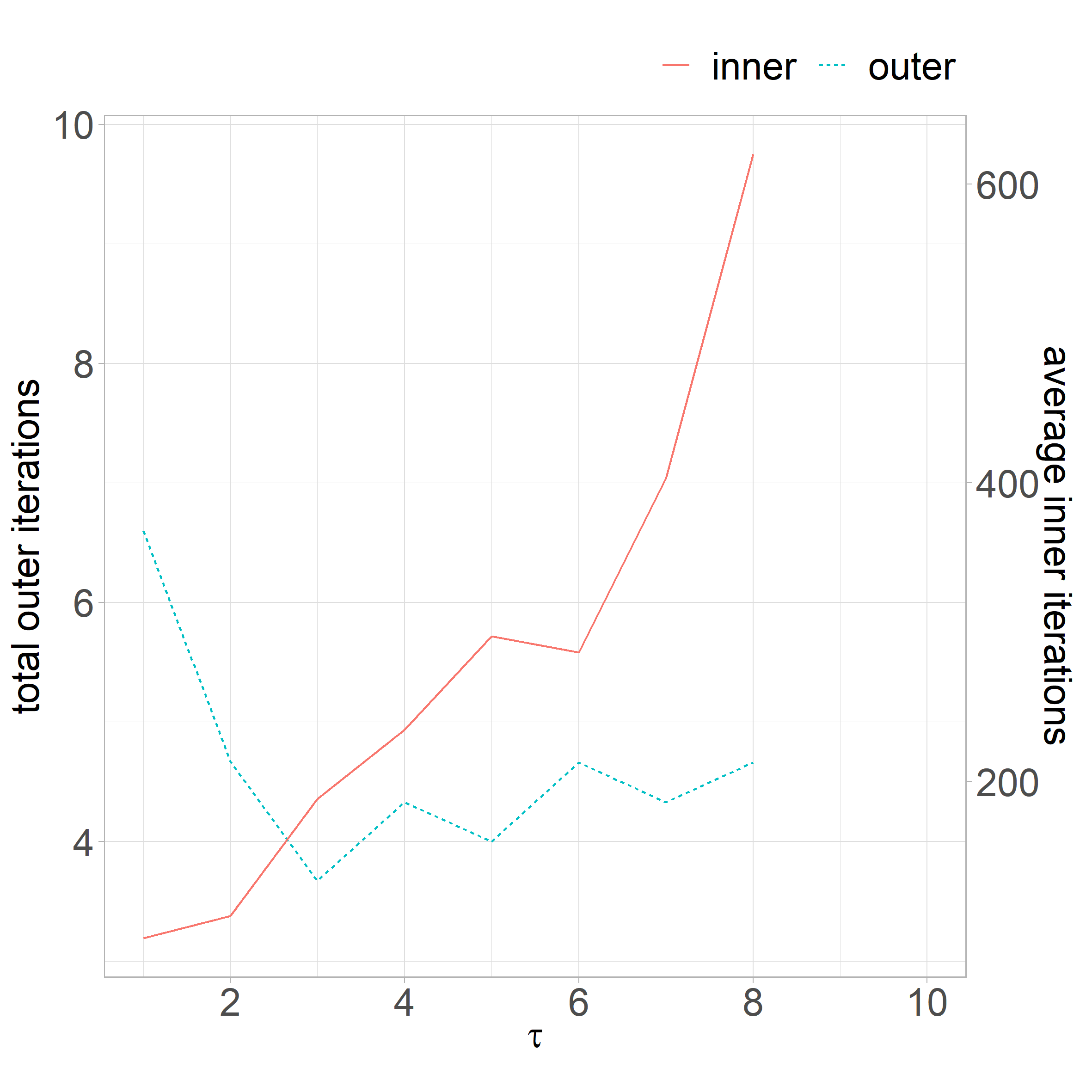}
   \caption{Outer and inner loop iterations (over $5$ independent trials) versus step size $\tau$. Here, we designate the first two outer iterations as a burn-in period, applying the stopping criterion only after this burn-in; moreover, we include the burn-in period in the total count of outer iterations, resulting in $3$ as the smallest possible number of outer iterations.
   As we can observe, the implicit step optimization problem (\ref{eqn: subproblem}) becomes easier as the step size $\tau$ decreases. This trend is reflected in the lower average number of inner iterations and higher number of outer iterations when the step size $\tau$ is smaller.  We note that since the minimal outer loop iteration is $3$, the (averaged) outer loop iterations tend to stabilize within the interval $[3,5]$ for those relatively large $\tau$ values with convergent inner loop iterations. However, when $\tau$ exceeds the threshold $8$, the inner loop is unable to converge within a prescribed number of $5000$ iterations. For these instances, we have chosen not to plot the corresponding inner and outer iterations.}
   \label{fig: tau_impact}
\end{figure}

Figure~\ref{fig: tau_impact} summarizes our numerical results, illustrating the impact of the step size $\tau_k$ on the IKLPD algorithm. Here, we report the number of inner loop iterations executed using the Adam optimizer and the outer loop iterations of IKLPD under various constant step sizes $\tau_k\equiv \tau$, employing the stopping criterion based on the aforementioned variance of the first variation. Note that in the implementation, we designate the first two outer iterations as a burn-in period, applying the stopping criterion only after this burn-in; moreover, we include the burn-in period in the total count of outer iterations, resulting in $3$ as the smallest possible number of outer iterations. We observe that for small (large) $\tau$ values, the sub-problem~\eqref{eqn: IKLPD} becomes simpler (more complex), requiring fewer (more) inner loop iterations to meet the stopping criterion. However, since each IKLPD step results in smaller (larger) progress, the total number of outer loop iterations correspondingly rises (falls). We note that since the minimal outer loop iteration is $3$, the (averaged) outer loop iterations tend to stabilize within the interval $[3,5]$ for those relatively large $\tau$ values with convergent inner loop iterations.
Additionally, when $\tau$ exceeds a particular threshold (which is $8$), the inner loop does not converge within the prescribed upper limit of $5000$ iterations. Overall, $\tau=3$ seems to be the optimal step size that balances the inner and outer loop convergences for this particular example. This empirical finding is consistent with the discussion that follows equation~\eqref{eqn: IKLPD}.

The detailed experiment setup for this numerical study is described as follows.
We consider a similar experiment setting as the Gaussian location mixture model. The true mixing distribution $P^\ast$ is a bimodal two moon distribution with $1.4$ times larger distance between two modes than the setting in Appendix~\ref{app: experiment_details}. $n=1000$ samples are generated through the data generating process~\eqref{eqn: data_generate}, and the step size $\tau_k=\tau$ is a constant. We use an NF model consisting of $10$ affine coupling blocks, and each block contains two hidden layers with width $64$. The initialization is $\rho_0 = \m N(0, I_2)$, and $M=1000$ particles are generated to approximate the probability distribution $\rho_k$ at each iteration. 

The threshold for the outer iterations is $\zeta = 0.05$. In the $k$-th outer iteration, the threshold for the inner loop is $\zeta_k = \frac{0.07 \cdot 20}{19 + k}$. If this convergence condition of the inner loop is not met within $5000$ iterations at the $k$-th iteration, we claim that the NF model fails find $\rho_k$ due to a overly large choice of the step size $\tau$. The maximum outer iteration is $50$. We set the first two outer iterations as burn-in iterations, where the NF model will not be considered to fail to converge for the first two outer iterations if the convergence condition is not met. 

We select the mirror step size $\tau \in\{1,2,\ldots,9,10\}$. For each $\tau$, the learning rate $\gamma$ for the Adam optimizer is selected to make the algorithm converge in smallest number of outer iteration. In the $k$-th outer iteration, the Adam optimizer with the initialized learing rate is $\gamma_k = \frac{20 \cdot \gamma}{19 + k}$, which decays along outer iterations. When $\tau = 9$ or $10$, for various choices of learning rate $\gamma$, the NF model fails to find $\rho_k$ at some iteration $k$ after the burn-in period.

\subsection{Composition of short flows and teacher-student architecture}\label{app:composition_NF}

\begin{algorithm}[t]
\caption{IKLPD with composition of short flows and the teacher-student architecture}\label{alg:cap2}
\begin{algorithmic}
%\Require data $X^n = (X_1, \cdots, X_n)$; $N_1$: the total outer loop steps; $N_2$: the total inner loop steps; $k_1$: the maximum length of the flow before compression; $k_0$: the length of the flow after compression; $N_5$: the maximum compression iterations; $\epsilon$: the compression threshold; $M$: the number of $\theta$ generated from $\rho$ at each inner loop step; $\gamma$: the initial learning rate of Adam optimizer; $\tau$: the initial mirror step size; $\beta_1$: the decay factor of $\gamma$ after each outer loop iteration;  $\beta_2$ : the increase factor of $\tau$ after each outer loop iteration;  $N_6$: the length of each short flows;  $\rho_0$: the base measure for the normalizing flow model.
\Require data $X^n = (X_1, \cdots, X_n)$; initialized NF model $T^{(0)}$; number of particles $M$; initialization $\rho_0$; learning rate of Adam optimizer $\gamma$; learning rate of Adam optimizer used in the compression process $\gamma'$; step size $\tau$; number of outer iterations $N_1$; number of inner iterations $N_2$; number of compression iterations $N_3$; the length of the compressed flow $k_0$; the maximum length of the flow before compression $k_1$; the length of the short flow $k_2$; the compression $L_2$ loss threshold $\epsilon$; the decay factor of the learning rate in Adam $\beta_1$;  the increase factor of the step size $\beta_2$
% $T^{(1)}$: the initial short flow model with length $N_6$ and initialized as $T^{(1)}_\# \rho_0 = \rho_0$.
\algrule
\State Sample $M$ particles $\underaccent{\tilde}{\theta}^{(0)} = [\theta^{(0)}_1, \cdots, \theta^{(0)}_M]$ from $\rho_0$.
\For{$k$ = $1$ to $N_1$}
    \State Initialize a short flow $T'^{(k)}$ with length $k_2$ such that $T'^{(k)}_\# \rho_{k-1} = \rho_{k-1}$.
    \State Set RequiresGrad = False for all parameters in $T^{(k-1)}$. \Comment{the parameters in $T^{(k-1)}$ are fixed}
    \State Initialize $T^{(k)}$ as $T^{(k-1)} \circ T'^{(k)}$.  \Comment{only the parameters in $T'^{(k)}$ could be learned}
    \State Compute the learning rate $\gamma_k = \gamma\cdot \beta_1^{k-1}$.
    \State Compute the step size $\tau_k = \tau\cdot \beta_2^{k-1}$.
    \For{$r$ = $1$ to $N_2$}
        \State $\underaccent{\tilde}{\theta}^{(k)} = T^{(k)}(\underaccent{\tilde}{\theta}^{(0)})$.
        \State Compute the loss $\m F_k = \m F_k(\underaccent{\tilde}{\theta}^{(k)}, \tau_k)$ in~\eqref{eqn: subproblem} with $\underaccent{\tilde}{\theta}^{(k)}$.
        \State Update $T^{(k)}$ based on the loss $\m F_k$ using Adam optimizer with learning rate $\gamma_k$.
    \EndFor
\If{Length of the NF model $T^{(k)} > k_1$}
    \State Initialize a flow $T''^{(k)}$  with length $k_0$ as $T''^{(k)}_\# \rho_0 = \rho_0$.
    \For {$s$ = $1$ to $N_3$}:
         \State Compute the $L_2$ loss $L_2(T^{(k)}, T''^{(k)} ) \coloneqq \ell_2(T^{(k)}(\underaccent{\tilde}{\theta}^{(0)}), T''^{(k)}(\underaccent{\tilde}{\theta}^{(0)}))$.
        \State Update $T''^{(k)}$ based on $L_2(T^{(k)}, T''^{(k)} )$ using Adam optimizer with learning rate $\gamma'$.
        \If{$L_2(T^{(k)}, T''^{(k)} )\leq \epsilon$}
            \State Break the current loop.
        \EndIf
    \EndFor
    \State Let $T^{(k)} = T''^{(k)}$.
\EndIf

\EndFor
\end{algorithmic}
\end{algorithm}

\begin{figure}[t]
\centering
     \includegraphics[scale = 0.45]{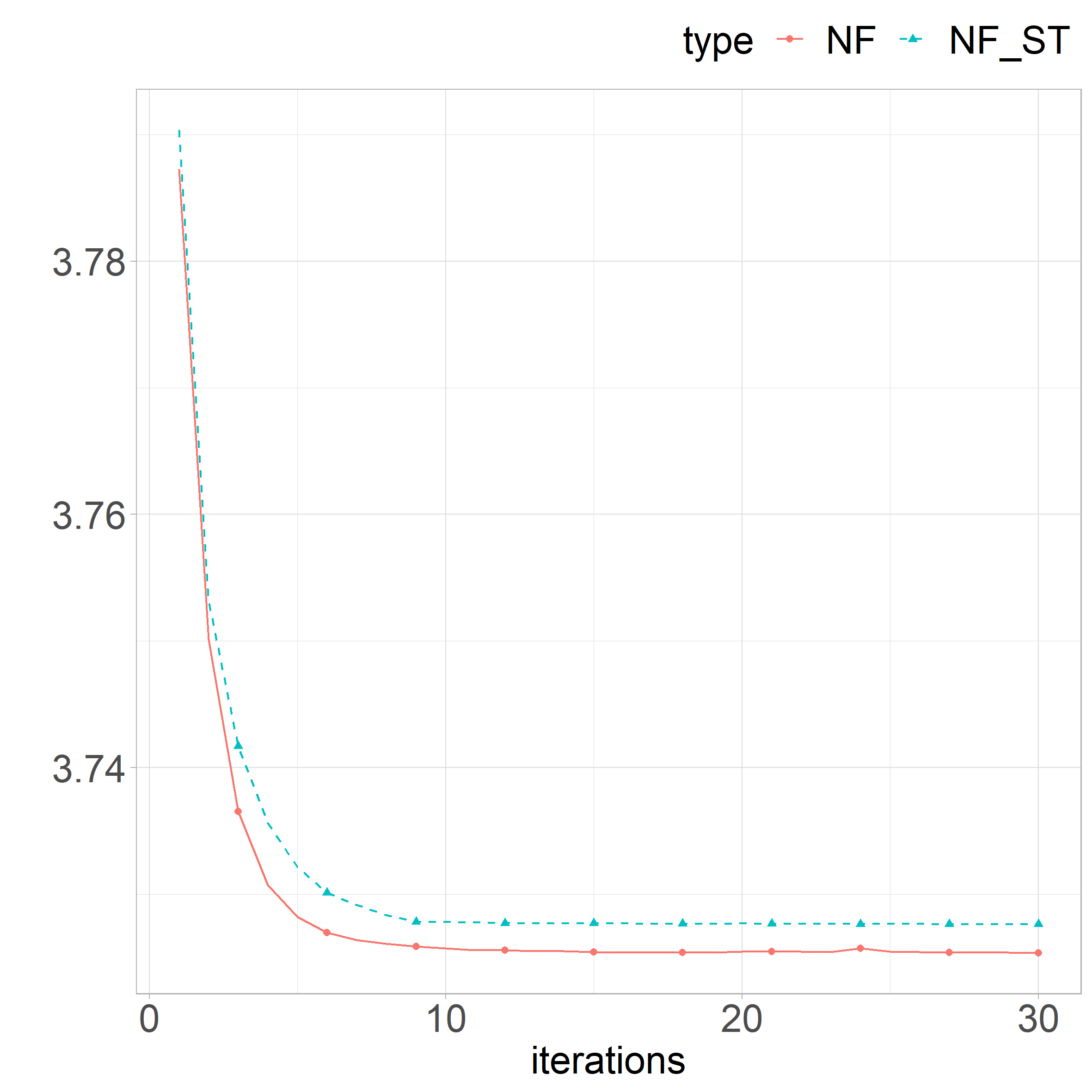}
   \caption{Optimized objective value $\m L_n(\rho_k)$ versus iteration count $k$. We report the averaged $\m L_n (\rho_k)$ over $5$ independent trials. The results indicate that \texttt{NF\_{ST}} and \texttt{NF} demonstrate very similar performance. The compression through the teacher-student architecture successfully maintains the expressive capability of the original model.}\label{fig:TS}
\end{figure}

Algorithm \ref{alg:cap2} summarizes the algorithm for the compositional scheme of sequentially stacking the local, short normalizing flows (each with length $k_2$), learned within each inner-loop iteration, to
form a global, layered normalizing flow for minimizing $\m F$. In the $k$-th outer iteration, the total length of the large NF model is $(k+1)\cdot k_2$ since we composite a new length-$k_2$ short flow with the original NF model with length $k\cdot k_2$. When the length of this compositional NF model exceeds the threshold of maximum length $k_1$, we employ a teacher-student architecture to distill knowledge from the compositional NF model to a shorter NF model of length $k_0$. This is achieved by minimizing (a sample version of) the $L_2$ distance between the larger (teacher) NF model and the smaller, length-$k_0$ (student) NF model.
%Denote that the original long flow we want to compress is $T^{(k)}$ and the short flow is $T''^{(k)}$. We want the compressed flow (student) $T''^{(k)}$ to learn as much knowledge as possible from the original long flow (teacher) $T^{(k)}$. We generate $M$ particles $\underaccent{\tilde}{\theta}^{(0)} = [\theta^{(0)}_1, \cdots, \theta^{(0)}_M]$ from $\rho_0$ and minimize the $\ell_2$ distance between $T^{(k)}(\underaccent{\tilde}{\theta}^{(0)})$ and $T''^{(k)}(\underaccent{\tilde}{\theta}^{(0)})$ for at most $N_3$ iterations. This compression process stops early if this $\ell_2$ distance is less than the threshold $\epsilon$.

Figure \ref{fig:TS} provides a numerical comparison between Algorithm~\ref{alg:cap2} that re-trains a long normalizing flow for each subproblem (indicated as \texttt{NF}) and Algorithm \ref{alg:cap2} that uses the compositional scheme and teacher-student architecture (indicated as \texttt{NF\_{ST}}). As we can see, the expressive capability of the composited normalizing flow model is comparable to that of the computationally more expensive \texttt{NF} method, which re-trains a lengthy normalizing flow at each iteration of the IKLPD algorithm. In addition, by employing the teacher-student architecture, we can preserve a constant storage budget while maintaining the expressive capability of the compositional normalizing flow model.

We describe below the concrete setting of this numerical experiment.
We use a similar objective functional as in the Gaussian location mixture models. The NF model consists of $30$ affine coupling blocks, and each block contains two hidden layers having $256$ units. The step size $\tau_k = \tau$ is fixed (i.e. the increase factor $\beta_2 = 1$), and no early stopping criterion is applied for the inner loop. All other hyperparameters are the same as in the deterministic NF model in Appendix~\ref{app: experiment_details}.

Algorithm~\ref{alg:cap1} is compared with the composition of short NF model with teacher-student architecture (\texttt{NF\_{ST}}) as shown in Algorithm~\ref{alg:cap2}. In each outer iteration, a short flow with length $k_2 = 4$ is composited with the original flow. Each short flow consists of $4$ affine coupling blocks and each block contains two hidden layers with width $512$. When the length of the composited flow exceeds the maximum length $k_1 = 40$, it will be compressed into a flow with length $k_0 = 20$. The initialized Adam learning rate is $\gamma = 8\times 10^{-5}$. The compression process is run at most $N_3 = 3000$ iterations with the Adam learning rate $\gamma' = 10^{-5}$ and stops early if the $L_2$ distance between the compressed flow and the original composited flow is less than the threshold $\epsilon = 10^{-4}$. Other hyperparameters are the same as the NF model in Appendix~\ref{app: experiment_details}

\allowdisplaybreaks

\section{Proofs of Theoretical Results}\label{app: proof}
In this appendix, we provide all deferred proofs for the main theoretical results from the main paper.

\subsection{Proof of Theorem~\ref{thm: FR_dynamics}}\label{app: cts_convergence}
\begin{comment}
In this section, we establish the convergence results for the KL divergence gradient flow (KLGF). Let $\rho_0\in\ms P^r(\Theta)$ be the initialization. Recall that KLGF $\{\rho_t: t\geq 0\}$ follows the continuous dynamics
\begin{align}\label{eqn: IKLPD_cts_appendix}
Z_t = \log\rho_t,
\quad \dot{Z}_t = - \frac{\delta\m F}{\delta\rho}(\rho_t) + \int_\Theta\frac{\delta\m F}{\delta\rho}(\rho_t)(\theta)\,\dd\rho_t(\theta).
\end{align}
Define the mean of the flow as $\bar\rho_t = \frac{1}{t}\int_0^t\rho_s\,\dd s$ for all $t\geq 0$.
\begin{theoremA}\label{thm: cts_regular}
When $\m F$ is a $\lambda$-relative strongly convex functional with respect to the KL divergence, and $\rho^\ast\in\ms P^r(\Theta)$, we have
\begin{align*}
\KL(\rho^\ast\,\|\,\rho_t) \leq e^{-\frac{\lambda t}{2}}\KL(\rho^\ast\,\|\,\rho_0).
\end{align*}
If $\lambda = 0$, we have
\begin{align*}
\m F(\bar{\rho}_t) - \m F(\rho^\ast) \leq \frac{1}{t}\KL(\rho^\ast\,\|\,\rho_0).
\end{align*}
\end{theoremA}
\end{comment}
Taking derivative with respect to time yields
\begin{align}
\frac{\dd}{\dd t}\KL(\rho^\ast\,\|\,\rho_t) 
&= -\frac{\dd}{\dd t}\int\log\rho_t\,\dd\rho^\ast\nonumber\\
&= \int_\Theta \Big(\frac{\delta\m F}{\delta\rho}(\rho_t)(\theta) - \int_\Theta \frac{\delta\m F}{\delta\rho}(\rho_t)(\theta')\,\dd\rho_t(\theta')\Big)\,\dd\rho^\ast(\theta)\nonumber\\
&= \int \frac{\delta\m F}{\delta\rho}(\rho_t)\,\dd(\rho^\ast - \rho_t)(\theta)\nonumber\\
&\leq \m F(\rho^\ast) - \m F(\rho_t)-\frac{\lambda}{2}\KL(\rho^\ast\,\|\,\rho_t).\label{eqn: KL_derivative}
\end{align}
When $\lambda > 0$, since $\m F(\rho^\ast) - \m F(\rho_t) \leq 0$, we have
\begin{align*}
\frac{\dd}{\dd t}\KL(\rho^\ast\,\|\,\rho_t) \leq -\frac{\lambda}{2}\KL(\rho^\ast\,\|\,\rho_t).
\end{align*}
By Gronwall's inequality, we have
\begin{align*}
    \KL(\rho^\ast\,\|\,\rho_t) \leq e^{-\frac{\lambda t}{2}}\KL(\rho^\ast\,\|\,\rho_0).
\end{align*}
When $\lambda = 0$,~\eqref{eqn: KL_derivative} is equivalent to
\begin{align*}
\frac{\dd}{\dd t}\KL(\rho^\ast\,\|\,\rho_t) \leq \m F(\rho^\ast) - \m F(\rho_t).
\end{align*}
This implies
\begin{align*}
\KL(\rho^\ast\,\|\,\rho_t) - \KL(\rho^\ast\,\|\,\rho_0) \leq t\m F(\rho^\ast) - \int_0^t\m F(\rho_s)\,\dd s.
\end{align*}
By Jensen's inequality, we have
\begin{align*}
\m F\Big(\frac{1}{t}\int_0^t \rho_s\,\dd s\Big) - \m F(\rho^\ast) \leq \frac{1}{t}\int_0^t \m F(\rho_s)\,\dd s - \m F(\rho^\ast) \leq \frac{1}{t}\KL(\rho^\ast\,\|\,\rho_0).
\end{align*}

\subsection{Proof of Theorem~\ref{thm: discrete_regular}}
We need the following lemma to bound the functional value at each iterate, the proof of which is deferred to Section~\ref{app: lemmas} in this supplementary file. 
\begin{lemmaA}\label{lem: one_step_improve}
For any $\rho\in\ms P^r(\Theta)$
\begin{align*}
\m F(\rho_{k}) - \m F(\rho) \leq \frac{1}{\tau_k}\KL(\rho\,\|\,\rho_{k-1}) - \Big(\frac{1}{\tau_k} + \frac{\lambda}{2}\Big) \KL(\rho\,\|\,\rho_{k})  - \frac{1}{\tau_k} \KL(\rho_{k}\,\|\,\rho_{k-1}).
\end{align*}
\end{lemmaA}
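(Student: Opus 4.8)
The plan is to use the fact that $\rho_k$ is the minimizer of the relatively strongly convex subproblem $\min_{\rho}\{\m F(\rho)+\tfrac{1}{\tau_k}\KL(\rho\,\|\,\rho_{k-1})\}$ and to convert its first‑order optimality condition into the claimed inequality by means of the three‑point identity for the KL divergence (Lemma~3.1 of~\cite{chen1993convergence}). As a preliminary I would record that $\rho_k\in\ms P^r(\Theta)$ and $\m F(\rho_k)<\infty$: since $\rho_{k-1}$ is a feasible competitor, minimality gives $\m F(\rho_k)+\tfrac{1}{\tau_k}\KL(\rho_k\,\|\,\rho_{k-1})\le\m F(\rho_{k-1})$, so $\KL(\rho_k\,\|\,\rho_{k-1})<\infty$, hence $\rho_k\ll\rho_{k-1}$, and iterating back to $\rho_0\in\ms P^r(\Theta)$ shows $\rho_k$ admits a density; this is what makes both the first‑order condition and the relative strong convexity inequality (postulated on $\ms P^r(\Theta)$ when $\lambda>0$) legitimately applicable at the base point $\rho_k$.

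Next, for a fixed $\rho\in\ms P^r(\Theta)$ I would perturb along the segment $\rho_\varepsilon:=(1-\varepsilon)\rho_k+\varepsilon\rho\in\ms P(\Theta)$ for $\varepsilon\in[0,1]$. The map $\varepsilon\mapsto g(\varepsilon):=\m F(\rho_\varepsilon)+\tfrac{1}{\tau_k}\KL(\rho_\varepsilon\,\|\,\rho_{k-1})$ is convex on $[0,1]$ (both summands are $L_2$‑convex, restricted to a line), so its right derivative at $0$ exists and, because $\varepsilon=0$ is a minimizer, is nonnegative. Evaluating it gives
\begin{align*}
0\;\le\;\int_\Theta\frac{\delta\m F}{\delta\rho}(\rho_k)\,\dd(\rho-\rho_k)\;+\;\frac{1}{\tau_k}\int_\Theta\log\frac{\rho_k}{\rho_{k-1}}\,\dd(\rho-\rho_k),
\end{align*}
where the $\m F$‑term is the first variation evaluated in the direction $\rho-\rho_k$, and the KL‑term comes from differentiating $\int\rho_\varepsilon\log\tfrac{\rho_\varepsilon}{\rho_{k-1}}$ and using $\int\dd(\rho-\rho_k)=0$.

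Then I would invoke the three‑point identity
\begin{align*}
\int_\Theta\log\frac{\rho_k}{\rho_{k-1}}\,\dd(\rho-\rho_k)=\KL(\rho\,\|\,\rho_{k-1})-\KL(\rho\,\|\,\rho_k)-\KL(\rho_k\,\|\,\rho_{k-1}),
\end{align*}
which follows from $\int\log\tfrac{\rho_k}{\rho_{k-1}}\dd\rho=\KL(\rho\,\|\,\rho_{k-1})-\KL(\rho\,\|\,\rho_k)$ and $\int\log\tfrac{\rho_k}{\rho_{k-1}}\dd\rho_k=\KL(\rho_k\,\|\,\rho_{k-1})$; combining it with the $\lambda$‑relative strong convexity of $\m F$ at $\rho_k$, namely $\m F(\rho)\ge\m F(\rho_k)+\int\frac{\delta\m F}{\delta\rho}(\rho_k)\,\dd(\rho-\rho_k)+\lambda\KL(\rho\,\|\,\rho_k)$, eliminates the first‑variation integral and, after rearranging, yields $\m F(\rho_k)-\m F(\rho)\le\tfrac{1}{\tau_k}\KL(\rho\,\|\,\rho_{k-1})-(\tfrac{1}{\tau_k}+\lambda)\KL(\rho\,\|\,\rho_k)-\tfrac{1}{\tau_k}\KL(\rho_k\,\|\,\rho_{k-1})$. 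Since $\KL(\rho\,\|\,\rho_k)\ge0$ and $\lambda\ge\lambda/2$, discarding half of the strong‑convexity gain gives exactly the stated inequality (so the lemma is in fact a mild weakening of what this argument proves, consistent with the $(1+\lambda\tau/2)^{-k}$ rate in Theorem~\ref{thm: discrete_regular}).

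The main obstacle is the rigorous justification of the first‑order step: one must check that $\rho_k$ is \emph{regular} for $\m F$ in the sense of Appendix~\ref{app:more_def}, so that $\frac{\delta\m F}{\delta\rho}(\rho_k)$ is defined and $\tfrac{\dd}{\dd\varepsilon}\m F(\rho_\varepsilon)\big|_{\varepsilon=0^+}=\int\frac{\delta\m F}{\delta\rho}(\rho_k)\dd(\rho-\rho_k)$, and that differentiation under the integral sign is valid for the KL term near $\varepsilon=0$ — the delicate case being when $\rho$ is not suitably dominated, so that $\KL(\rho_\varepsilon\,\|\,\rho_{k-1})$ or its slope could blow up. I would first carry out the computation for $\rho$ with bounded density and compact support (the class used to define first variations), and then pass to a general $\rho\in\ms P^r(\Theta)$ by a routine approximation/monotone‑limit argument; the remaining manipulations with the KL identities are straightforward bookkeeping.
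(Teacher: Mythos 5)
Your proof is correct and takes essentially the same route as the paper's: first-order optimality of the implicit step at $\rho_k$, combined with $\lambda$-relative strong convexity and the KL three-point identity that rewrites $\int\log\frac{\rho_k}{\rho_{k-1}}\,\dd(\rho-\rho_k)$ as $\KL(\rho\,\|\,\rho_{k-1})-\KL(\rho\,\|\,\rho_k)-\KL(\rho_k\,\|\,\rho_{k-1})$. The differences are only presentational: you state optimality as a variational inequality (nonnegative right derivative along the segment toward $\rho$) instead of the paper's assertion that the first variation of the subproblem objective is constant, and you retain the full $\lambda$ before discarding half of it, whereas the paper applies the convexity bound directly with $\lambda/2$.
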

Applying Lemma~\ref{lem: one_step_improve} with $\rho=\rho^\ast$ yields
\begin{align}\label{eqn: diff_func_val}
0\leq \m F(\rho_{k}) - \m F(\rho^\ast) \leq \frac{1}{\tau_k}\KL(\rho^\ast\,\|\,\rho_{k-1}) - \Big(\frac{1}{\tau_k} + \frac{\lambda}{2}\Big) \KL(\rho^\ast\,\|\,\rho_{k}).
\end{align}
When $\lambda > 0$ and $\tau_k = \tau$ for all $k\geq 1$, the above inequality implies
\begin{align*}
\KL(\rho^\ast\,\|\,\rho_{k}) \leq \Big(1 + \frac{\lambda\tau}{2}\Big)^{-1}\KL(\rho^\ast\,\|\,\rho_{k-1}).
\end{align*}
Therefore, we have
\begin{align*}
\KL(\rho^\ast\,\|\,\rho_k) \leq \Big(1 + \frac{\lambda\tau}{2}\Big)^{-k}\KL(\rho^\ast\,\|\,\rho_0).
\end{align*}
When $\lambda = 0$,~\eqref{eqn: diff_func_val} implies
\begin{align*}
\tau_k\big[\m F(\rho_{k}) - \m F(\rho^\ast)\big] \leq \KL(\rho^\ast\,\|\,\rho_{k-1}) - \KL(\rho^\ast\,\|\,\rho_{k}).
\end{align*}
Summing the above inequality from $1$ to $k$, we have
\begin{align*}
\sum_{l=1}^k \tau_l\big[\m F(\rho_l) - \m F(\rho^\ast)\big] \leq \KL(\rho^\ast\,\|\,\rho_0) - \KL(\rho^\ast\,\|\,\rho_k).
\end{align*}
Therefore, we have
\begin{align*}
\min_{1\leq l\leq k}\m F(\rho_l) - \m F(\rho^\ast) \leq \frac{1}{\tau_1 + \cdots + \tau_k}\KL(\rho^\ast\,\|\,\rho_0).
\end{align*}
%%%%%%%%%%%%%%%%%%%%%%%%%%%%%%%%%%%%%%%%%%%%%%%%%%%%%%%%%%%%%%%%%%%%%%%%
%%%%%%%%%%%%%%%%%%%%%%%%%%%%%%%%%%%%%%%%%%%%%%%%%%%%%%%%%%%%%%%%%%%%%%%%
\subsection{Proof of Theorem~\ref{thm: discrete}}\label{app: discrete_proof}
Similar to the proof of Theorem~\ref{thm: discrete_regular}, for any $\rho\in\ms P^r(\Theta)$ we have
\begin{align}
\min_{1\leq l\leq k}\m F(\rho_l) - \m F(\rho) \leq \frac{1}{\tau_1 + \cdots + \tau_k}\KL(\rho\,\|\,\rho_0).
\end{align}
By Assumption~\ref{assump: smooth_first_variation}, we have
\begin{align*}
|\m F(\rho^\ast) - \m F(\rho)| \leq LW_1(\rho^\ast, \rho).
\end{align*}
Therefore, 
\begin{align*}
\min_{1\leq l\leq k}\m F(\rho_l) - \m F(\rho^\ast) \leq \frac{1}{\tau_1 + \cdots + \tau_k}\KL(\rho\,\|\,\rho_0) + LW_1(\rho^\ast, \rho).
\end{align*}

\medskip
\noindent {\bf Special cases: discrete measures and singular measures supported on hyperplanes.}
Let $\psi_{m_1}$ be a probability measure on $\Theta$ with first-order moment $m_1 = \mb E_{\theta\sim\psi_{m_1}}\|\theta\| < \infty$. Let $X\sim\rho$ and $Y\sim\psi_{m_1}$. Then $X+Y\sim\rho\ast\psi_{m_1}$. By definition, we have
\begin{align}\label{eqn: W1dist bound}
W_1(\rho, \rho\ast\psi_{m_1}) = \inf_{\theta\sim\rho, \theta'\sim\rho\ast\psi_{m_1}}\mb E\|\theta-\theta'\| \leq \mb E\|X - (X+Y)\| = \mb E\|Y\| = m_1.
\end{align}
This result helps control the smoothing error through $W_1$-distance.

\paragraph{Case 1: $\rho^\ast$ is a discrete measure with bounded support.}
We need the following lemma to control the Gaussian smoothing error in KL divergence. The proof is deferred to Section~\ref{app: lemmas} in this supplementary file.
\begin{lemmaA}[KL divergence bound after Gaussian smoothing]\label{lem: KL smoothing}
Assume $\rho^\ast$ is a discrete probability measure with bounded support. Let $R_\theta = \sup\big\{\|\theta\|: \theta\in\supp(\rho^\ast)\big\}$, and $\rho^\sigma = \rho^\ast\ast\m N(0, \sigma^2I_d)$. If $\rho_0 = \m N(0, \beta^2 I_d)$, we have
\begin{align*}
\KL(\rho^\sigma\,\|\,\rho_0) \leq d\log\frac{\beta}{\sigma} + \frac{d\sigma^2+R_\theta^2}{2\beta^2} - \frac{d}{2}.
\end{align*}
\end{lemmaA}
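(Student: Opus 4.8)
The plan is to compute the KL divergence $\KL(\rho^\sigma\,\|\,\rho_0)$ directly by splitting it into an entropy term and a cross term. Write $\KL(\rho^\sigma\,\|\,\rho_0) = -H(\rho^\sigma) - \int \log\rho_0\,\dd\rho^\sigma$, where $H(\rho^\sigma) = -\int \rho^\sigma \log \rho^\sigma$ is the differential entropy. Since $\rho_0 = \m N(0, \beta^2 I_d)$, the cross term is completely explicit: $-\int \log\rho_0\,\dd\rho^\sigma = \frac{d}{2}\log(2\pi\beta^2) + \frac{1}{2\beta^2}\,\mb E_{\theta\sim\rho^\sigma}\|\theta\|^2$. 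Because $\rho^\sigma = \rho^\ast \ast \m N(0,\sigma^2 I_d)$, a draw from $\rho^\sigma$ has the form $\theta = \theta^\ast + \sigma Z$ with $\theta^\ast\sim\rho^\ast$ and $Z\sim\m N(0,I_d)$ independent, so $\mb E\|\theta\|^2 = \mb E\|\theta^\ast\|^2 + \sigma^2 d \leq R_\theta^2 + \sigma^2 d$ using the bounded-support hypothesis. This handles the cross term.

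For the entropy term, the key point is that adding independent noise only increases differential entropy is not quite what we want; instead I would use that $\rho^\sigma$ is a mixture of Gaussians $\m N(\theta^\ast, \sigma^2 I_d)$ and that the entropy of a mixture is at least the average of the component entropies (concavity of entropy), giving $H(\rho^\sigma) \geq \frac{d}{2}\log(2\pi e \sigma^2)$, i.e. $-H(\rho^\sigma) \leq -\frac{d}{2}\log(2\pi e\sigma^2)$. Combining the two bounds,
\begin{align*}
\KL(\rho^\sigma\,\|\,\rho_0) &\leq -\frac{d}{2}\log(2\pi e\sigma^2) + \frac{d}{2}\log(2\pi\beta^2) + \frac{R_\theta^2 + \sigma^2 d}{2\beta^2}\\
&= d\log\frac{\beta}{\sigma} + \frac{d\sigma^2 + R_\theta^2}{2\beta^2} - \frac{d}{2},
\end{align*}
which is exactly the claimed bound.

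The main obstacle is justifying the entropy lower bound $H(\rho^\sigma)\geq \frac{d}{2}\log(2\pi e\sigma^2)$ cleanly; the cleanest route is the concavity of differential entropy in the distribution, applied to the representation $\rho^\sigma = \int \m N(\cdot\,;\theta^\ast,\sigma^2 I_d)\,\dd\rho^\ast(\theta^\ast)$, so that $H(\rho^\sigma) \geq \int H\big(\m N(\theta^\ast,\sigma^2 I_d)\big)\,\dd\rho^\ast(\theta^\ast) = \frac{d}{2}\log(2\pi e\sigma^2)$ since each component has the same entropy. One should note this requires only that $\rho^\ast$ has bounded support (ensuring all relevant quantities are finite), which is the standing assumption; the argument in fact works for any compactly supported $\rho^\ast$, not just discrete ones, though the lemma is stated for the discrete case as that is what is needed in the application to Theorem~\ref{thm: discrete}.
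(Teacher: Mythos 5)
Your proposal is correct, and it proves the lemma by a genuinely different route than the paper. The paper bounds $\KL(\rho^\sigma\,\|\,\rho_0)$ by $\sup_{\theta\in\supp(\rho^\ast)}\KL\big(\m N(\theta,\sigma^2 I_d)\,\|\,\rho_0\big)$ via a merging argument: it shows that transferring mixture weight between any two atoms is a convex problem in the weights, so the KL is maximized when all mass sits on a single component, and then it evaluates the Gaussian--Gaussian KL in closed form with $\|\theta\|\leq R_\theta$. You instead split $\KL(\rho^\sigma\,\|\,\rho_0) = -H(\rho^\sigma) - \int\log\rho_0\,\dd\rho^\sigma$, compute the cross term exactly using $\mb E_{\rho^\sigma}\|\theta\|^2 \leq R_\theta^2 + d\sigma^2$, and lower-bound the entropy by $\tfrac{d}{2}\log(2\pi e\sigma^2)$ via concavity of differential entropy (equivalently, $H(\theta^\ast+\sigma Z)\geq H(\sigma Z\,|\,\theta^\ast)$, since conditioning reduces entropy); the two bounds combine to exactly the stated inequality, and all quantities are finite because $\rho^\ast$ has bounded support. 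What your route buys: it avoids the component-merging step entirely, it applies verbatim to any compactly supported (not necessarily discrete) $\rho^\ast$, and in fact it yields the slightly sharper bound with $R_\theta^2$ replaced by $\mb E_{\rho^\ast}\|\theta\|^2$, so only a finite second moment is really needed. What the paper's route buys is a single closed-form comparison against one Gaussian component, which is arguably more elementary and needs no entropy machinery; note also that the paper's sup-over-components bound could itself be obtained directly from convexity of $\KL(\cdot\,\|\,\rho_0)$ in its first argument, of which the merging argument is a hands-on instance. Either proof is acceptable.
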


Note that the first-order moment of $\m N(0, \sigma^2I_d)$ is smaller than $\sqrt{d\sigma^2}$. Applying Lemma~\ref{lem: KL smoothing} and Inequality~\eqref{eqn: W1dist bound} yields
\begin{align*}
0\leq \min_{1\leq l\leq k}\m F(\rho_l) - \m F(\rho^\ast) \leq \frac{1}{\tau_1 + \cdots + \tau_k}\Big(d\log\frac{\beta}{\sigma} + \frac{d\sigma^2+R_\theta^2}{2\beta^2} - \frac{d}{2}\Big) + L\sqrt{d\sigma^2}.
\end{align*}
Since the above inequality holds for all $\sigma > 0$, by choosing 
$\sigma^{2} = L^{-2}(\tau_1 + \cdots + \tau_k)^{-2}$, we have
\begin{align*}
0\leq \min_{1\leq l\leq k}\m F(\rho_l) - \m F(\rho^\ast) 
&\leq \frac{d\log[\beta L(\tau_1 + \cdots + \tau_k)] + \frac{d}{2\beta^2L_2(\tau_1 + \cdots + \tau_k)^2}+\frac{R_\theta^2}{2\beta^2} + \sqrt{d} - \frac{d}{2}}{\tau_1 + \cdots + \tau_k}.
\end{align*}
When $\tau_1 = \cdots = \tau_k = \tau$, the upper bound has order $O(\frac{d\log k}{k})$.
%$\sigma^{-2} = L(\tau_1 + \cdots + \tau_k) + \beta^{-2}$, we have
%\begin{align*}
%0\leq \min_{1\leq l\leq k}\m F(\rho_l) - \m F(\rho^\ast) 
%&\leq \frac{d}{2(\tau_1 + \cdots + \tau_k)}\Big(\log\big[\beta^2L(\tau_1 + \cdots + \tau_k) + 1\big] + \frac{R_\theta^2}{d\beta^2} + 1\Big).
%\end{align*}

\paragraph{Case 2: $\rho^\ast$ is absolutely continuous with respect to the Lebesgue measure supported on a $d'$-dimensional hyperplane.}
Without loss of generality, assume $\rho^\ast$ is supported on $\supp(\rho^\ast) = \{({\theta'}, 0, \cdots, 0)\in\mb R^d: {\theta'}\in\mb R^{d'}\}$. Let $\rho^\ast_{d'}$ denote the distribution of $\rho^\ast$ restricted to the first $d'$ coordinates. Then $\rho^\ast_{d'}\in\ms P^r(\mb R^{d'})$. Assume $Z = (X, Y)$ with $X\in\mb R^{d'}$ and $Y\in\mb R^{d-d'}$ such that $(X, 0_{d-d'})\sim\rho^\ast$, $Y\sim\m N(0, \sigma^2I_{d-d'})$, and $X$ is independent with $Y$. Then $X\sim\rho^\ast_{d'}$ is a continuous random variable in $\mb R^{d'}$. Similarly, let $Z_0 = (X_0, Y_0) \sim \rho_0 = \m N(0, \beta^2I_d)$, such that $X_0\sim\m N(0, \beta^2I_{d'})$ and $Y_0\sim\m N(0, \beta^2I_{d-d'})$. Then, we have
\begin{align*}
P_Z(z) = P_{X}(x)P_{Y}(y) \quad\mx{and}\quad \rho_0(z) = P_{Z_0}(z) = P_{X_0}(x)P_{Y_0}(y).
\end{align*}
Note that
\begin{align*}
\KL(P_Z\,\|\,\rho_0) &= \KL(P_Z\,\|\,P_{Z_0}) =  \int \log\frac{P_Z}{P_{Z_0}}\,\dd P_Z\\
&= \int\!\!\int \log\frac{P_X(x)P_Y(y)}{P_{X_0}(x)P_{Y_0}(y)}\,\dd P_{X}(x)\,\dd P_{Y}(y)\\
&= \KL(P_X\,\|\,\m N(0, \beta^2I_{d'})) + \KL\big(\m N(0, \sigma^2I_{d-d'})\,\big\|\,\m N(0, \beta^2I_{d-d'})\big)\\
&= \KL(P_X\,\|\,\m N(0, \beta^2I_{d'})) + \frac{d-d'}{2}\Big(\log\frac{\beta^2}{\sigma^2} - 1 + \frac{\sigma^2}{\beta^2}\Big).
\end{align*}
By Theorem~\ref{thm: discrete} and Inequality~\eqref{eqn: W1dist bound}, for every $\sigma^2>0$ we have
\begin{align*}
\min_{1\leq l\leq k}\m F(\rho_l) - \m F(\rho^\ast) \leq \frac{1}{\tau_1 + \cdots + \tau_k}\Big[\KL(P_X\,\|\,\m N(0, \beta^2I_{d'})) + \frac{d-d'}{2}\Big(\log\frac{\beta^2}{\sigma^2} - 1 + \frac{\sigma^2}{\beta^2}\Big)\Big] + L\sqrt{(d-d')\sigma^2}.
\end{align*}
Noting that $P_X = \rho^\ast_{d'}$, by choosing $\sigma^{2} = L^{-2}(\tau_1 + \cdots + \tau_k)^{-2}$, the above inequality implies
\begin{align*}
\min_{1\leq l\leq k}\m F(\rho_l) - \m F(\rho^\ast) \leq \frac{(d-d')\log[\beta L(\tau_1 + \cdots + \tau_k)] + \frac{d-d'}{2\beta^2L_2(\tau_1 + \cdots + \tau_k)^2} + \sqrt{d-d'} - \frac{d-d'}{2} + \KL(\rho^\ast_{d'}\,\|\,\m N(0, \beta^2I_{d'}))}{\tau_1 + \cdots + \tau_k}.
\end{align*}
When $\tau_1 = \cdots = \tau_k = \tau$, the upper bound has order $O\big(\frac{(d-d')\log k}{k}\big)$.
%%%%%%%%%%%%%%%%%%%%%%%%%%%%%%%%%%%%%%%%%%%%%%%
%%%%%%%%%%%%%%%%%%%%%%%%%%%%%%%%%%%%%%%%%%%%%%%
\subsection{Proof of Theorem~\ref{thm: error tol}}
Recall that
\begin{align*}
\eta_k(\theta) = \frac{\delta\m F}{\delta\rho}(\rho_k^{\rm err})(\theta) + \frac{1}{\tau_k}\log\frac{\rho_k^{\rm err}}{\rho_{k-1}^{\rm err}}(\theta).
\end{align*}
Let $\tilde\eta_k = \eta_k - \inf_{\theta\in\Theta}\eta_k(\theta)$. Therefore, we have $\|\tilde\eta_k\|_\infty \leq \varepsilon_k$. Since $\m F$ is $\lambda$-relative strongly convex, we have
\begin{align*}
\m F(\rho) - \m F(\rho_{k}^{\rm err})
&\geq \int_\Theta\frac{\delta\m F}{\delta\rho}(\rho_{k}^{\rm err})(\theta)\,\dd(\rho-\rho_{k}^{\rm err})(\theta) + \frac{\lambda}{2}\KL(\rho\,\|\,\rho_{k}^{\rm err})\\
&= \int_\Theta \tilde\eta_k(\theta) - \frac{1}{\tau_k}\log\frac{\rho_{k}^{\rm err}}{\rho_{k-1}^{\rm err}}(\theta)\,\dd(\rho-\rho_{k}^{\rm err})(\theta) + \frac{\lambda}{2}\KL(\rho\,\|\,\rho_{k}^{\rm err})\\
&= -\frac{1}{\tau_k}\KL(\rho\,\|\,\rho_{k-1}^{\rm err}) + \Big(\frac{1}{\tau_k} + \frac{\lambda}{2}\Big) \KL(\rho\,\|\,\rho_{k}^{\rm err})  + \frac{1}{\tau_k} \KL(\rho_{k}^{\rm err}\,\|\,\rho_{k-1}^{\rm err}) + \int_\Theta \tilde\eta_k(\theta)\,\dd(\rho - \rho_k^{\rm err})(\theta).
\end{align*}
Note that
\begin{align*}
\int_\Theta\tilde\eta_k(\theta)\,\dd(\rho - \rho_k^{\rm err})(\theta)
\leq \|\tilde\eta_k\|_\infty\|\rho - \rho_k^{\rm err}\|_1 \leq \varepsilon_k \cdot\sqrt{2\KL(\rho\,\|\, \rho_{k}^{\rm err})},
\end{align*}
where the last inequality is due to Pinsker's inequality. Thus, we have
\begin{align*}
0\geq \m F(\rho^\ast) - \m F(\rho_k^{\rm err}) \geq -\frac{1}{\tau_k}\KL(\rho^\ast\,\|\,\rho_{k-1}^{\rm err}) + \Big(\frac{1}{\tau_k} + \frac{\lambda}{2}\Big) \KL(\rho^\ast\,\|\,\rho_{k}^{\rm err}) - \varepsilon_k\sqrt{2\KL(\rho^\ast\,\|\, \rho_k^{\rm err})}.
\end{align*}
This implies
\begin{align*}
\sqrt{\KL(\rho^\ast\,\|\,\rho_k^{\rm err})} \leq \frac{\sqrt{\KL(\rho^\ast\,\|\,\rho_{k-1}^{\rm err})}}{\sqrt{1 + \tau_k\lambda/2}} + \sqrt{2}\tau_k\varepsilon_k.
\end{align*}
Therefore,
\begin{align}\label{eqn: errtol_converg}
\sqrt{\KL(\rho^\ast\,\|\,\rho_k^{\rm err})} \leq \frac{\sqrt{\KL(\rho^\ast\,\|\,\rho_0)}}{\prod_{l=1}^k \sqrt{1 + \lambda\tau_l/2}} + \sum_{l=1}^k \frac{\sqrt{2}\tau_l\varepsilon_l}{\prod_{s=l+1}^k\sqrt{1+\lambda\tau_s/2}}.
\end{align}
\paragraph{Case 1:}When $\varepsilon_k \leq \kappa\varepsilon^k$ for some $0 < \varepsilon < 1, \kappa > 0$ and $\tau_k = \tau$,
\begin{align*}
\sum_{l=1}^k \frac{\sqrt{2}\tau_l\varepsilon_l}{\prod_{s=l+1}^k\sqrt{1+\lambda\tau_s/2}}
&\leq \sum_{l=1}^k \frac{\sqrt{2}\tau\kappa\varepsilon^l(1+\lambda\tau/2)^{l/2}}{(1+\lambda\tau/2)^{k/2}} = \frac{\sqrt{2}\tau\kappa}{(1+\lambda\tau/2)^{k/2}}\cdot\sum_{l=1}^k \big[\varepsilon\sqrt{1+\lambda\tau/2}\big]^l.
\end{align*}
We can always assume that $\varepsilon\sqrt{1+\lambda\tau/2}\neq 1$, since if $\varepsilon\sqrt{1+\lambda\tau/2}= 1$, we can find $\varepsilon' \in (\varepsilon, 1)$, so that $\varepsilon_k \leq \kappa(\varepsilon')^k$.
Note that
\begin{align*}
\sum_{l=1}^k  \big[\varepsilon\sqrt{1+\lambda\tau/2}\big]^l \leq 
\begin{cases}
\frac{1}{1-\varepsilon\sqrt{1+\lambda\tau/2}}, & \varepsilon\sqrt{1+\lambda\tau/2} < 1\\
%k & \varepsilon\sqrt{1+\lambda\tau/2} = 1\\
&\\
\frac{[\varepsilon\sqrt{1+\lambda\tau/2}]^{k+1}}{\varepsilon\sqrt{1+\lambda\tau/2} - 1}, & \varepsilon\sqrt{1+\lambda\tau/2} > 1
\end{cases}.
\end{align*}
Therefore, we have
\begin{align*}
\sum_{l=1}^k \frac{\sqrt{2}\tau_l\varepsilon_l}{\prod_{s=l+1}^k\sqrt{1+\lambda\tau_s/2}} \leq
\begin{cases}
\frac{\sqrt{2}\tau\kappa}{1-\varepsilon\sqrt{1+\lambda\tau/2}}\cdot\big(1+\frac{\lambda\tau}{2}\big)^{-\frac{k}{2}}, & \varepsilon\sqrt{1+\lambda\tau/2} < 1\\
%\sqrt{2}\tau\kappa k\cdot\big(1+\frac{\lambda\tau}{2}\big)^{-\frac{k}{2}} & \varepsilon\sqrt{1+\lambda\tau/2} = 1\\
&\\
\frac{\sqrt{2}\tau\kappa\varepsilon}{\varepsilon\sqrt{1+\lambda\tau/2}-1}\varepsilon^k, & \varepsilon\sqrt{1+\lambda\tau/2} > 1
\end{cases}
\end{align*}
Therefore, there exists $C = C(\tau, \lambda, \varepsilon) > 0$, such that
\begin{align*}
\sum_{l=1}^k \frac{\sqrt{2}\tau_l\varepsilon_l}{\prod_{s=l+1}^k\sqrt{1+\lambda\tau_s/2}} 
\leq C\kappa \max\{\varepsilon, (1+\lambda\tau/2)^{-1/2}\}^{k}.
\end{align*}
Combining the above inequality with~\eqref{eqn: errtol_converg} yields
\begin{align*}
\KL(\rho^\ast\,\|\,\rho_k^{\rm err}) \leq \frac{C\kappa^2 + 2\KL(\rho^\ast\,\|\,\rho_0)}{(\min\{\varepsilon^{-2}, 1+\lambda\tau/2\})^k}.
\end{align*}
\paragraph{Case 2:}When $\varepsilon_k = \varepsilon k^{-\alpha}$ for some $\alpha, \varepsilon > 0$ and $\tau_k = \tau$ for every $k\geq 1$, we show that $\KL(\rho^\ast\,\|\,\rho_k) \lesssim k^{-2\alpha}$. In fact, note that
\begin{align*}
\sum_{l=1}^k \frac{\sqrt{2}\tau_l\varepsilon_l}{\prod_{s=l+1}^k\sqrt{1+\lambda\tau_s/2}} \leq \frac{\sqrt{2}\tau\varepsilon}{(1+\tau\lambda/2)^{k/2}} \sum_{l=1}^k \frac{(1+\tau\lambda/2)^{l/2}}{l^{\alpha}}.
\end{align*}
We prove that there exists $C = C(\tau,\lambda, \alpha) > 0$, such that 
\begin{align}\label{eqn: bound_of_sum}
    \sum_{l=1}^k \frac{(1+\tau\lambda/2)^{l/2}}{l^\alpha} \leq \frac{C(1+\tau\lambda/2)^{k/2}}{k^\alpha}.
\end{align}
We use the induction to prove~\eqref{eqn: bound_of_sum}. If the statement is correct for $k$, then
\begin{align*}
\sum_{l=1}^{k+1} \frac{(1+\tau\lambda/2)^{l/2}}{l^\alpha} \stackrel{\ri}{\leq} \frac{C(1+\tau\lambda/2)^{k/2}}{k^\alpha} + \frac{(1+\tau\lambda/2)^{(k+1)/2}}{(k+1)^\alpha} \stackrel{\rii}{\leq} \frac{C(1+\tau\lambda/2)^{(k+1)/2}}{(k+1)}.
\end{align*}
Here, (i) is by the induction hypothesis, and (ii) is equivalent to
\begin{align*}
\Big(1+ \frac{1}{k}\Big)^{\alpha} C + \sqrt{1+\frac{\tau\lambda}{2}} \leq C\sqrt{1+\frac{\tau\lambda}{2}}.
\end{align*}
The above inequality is true when
\begin{align*}
\Big(1 + \frac{1}{k}\Big)^\alpha \leq \sqrt{1 + \frac{\tau\lambda}{4}},\quad\mbox{and}\quad C \geq \frac{\sqrt{1+\tau\lambda/2}}{\sqrt{1+\tau\lambda/2} - \sqrt{1+\tau\lambda/4}}.
\end{align*}
When $(1+k^{-1})^{\alpha} >  \sqrt{1+\tau\lambda/4}$, i.e. $k<\big[(1+\tau\lambda/4)^{1/2\alpha}-1\big]^{-1}$, we can choose $C$ large enough such that~\eqref{eqn: bound_of_sum} holds. Therefore, by induction, we know~\eqref{eqn: bound_of_sum} is true for all $k\geq 1$ when
\begin{align*}
C = \max\bigg\{\frac{\sqrt{1+\tau\lambda/2}}{\sqrt{1+\tau\lambda/2} - \sqrt{1+\tau\lambda/4}}, 
\max\Big\{\frac{k^\alpha}{(1+\tau\lambda/2)^{k/2}}\cdot\sum_{l=1}^k \frac{(1+\tau\lambda/2)^{l/2}}{l^\alpha}: k < \frac{1}{(1+\tau\lambda/4)^{1/2\alpha}-1}\Big\}\bigg\}.
\end{align*}
Applying~\eqref{eqn: bound_of_sum} to~\eqref{eqn: errtol_converg} yields
\begin{align*}
\sqrt{\KL(\rho^\ast\,\|\,\rho_k^{\rm err})} 
&\leq \frac{\sqrt{\KL(\rho^\ast\,\|\,\rho_0)}}{(1 + \lambda\tau/2)^{k/2}} + \frac{\sqrt{2}\tau\varepsilon}{(1+\tau\lambda/2)^{k/2}}\cdot \frac{C(1+\tau\lambda/2)^{k/2}}{k^\alpha}\\
&= \frac{\sqrt{\KL(\rho^\ast\,\|\,\rho_0)}}{(1 + \lambda\tau/2)^{k/2}} + \frac{\sqrt{2}C\tau\varepsilon}{k^\alpha}.
\end{align*}
Therefore, we have
\begin{align*}
\KL(\rho^\ast\,\|\,\rho_k^{\rm err}) \leq \frac{2\KL(\rho^\ast\,\|\,\rho_0)}{(1+\lambda\tau/2)^{k}} + \frac{C\varepsilon^2}{k^{2\alpha}}
\end{align*}
for some $C = C(\tau, \lambda, \alpha)$.

%%%%%%%%%%%%%%%%%%%%%%%%%%%%%%%%%%%%%%%%%%%%%%%
%%%%%%%%%%%%%%%%%%%%%%%%%%%%%%%%%%%%%%%%%%%%%%%
\subsection{Proof of Theorem~\ref{thm: stochastic}}
By applying Lemma~\ref{lem: one_step_improve}, we have
\begin{align*}
\Big(\frac{1}{\tau_k} + \frac{\lambda}{2}\Big)\KL(\rho\,\|\,\rho_k^{\rm stoc}) - \frac{1}{\tau_k}\KL(\rho\,\|\,\rho_{k-1}^{\rm stoc}) \leq \m F_{\xi_k}(\rho) - \m F_{\xi_k}(\rho_k^{\rm stoc}) - \frac{1}{\tau_k}\KL(\rho_k^{\rm stoc}\,\|\,\rho_{k-1}^{\rm stoc}).
\end{align*}
Note that
\begin{align*}
\mb E\big[\m F_{\xi_k}(\rho) - \m F_{\xi_k}(\rho_k^{\rm stoc})\,\big|\,\rho_{k-1}^{\rm stoc}\big]
&= \mb E\big[\m F_{\xi_k}(\rho) - \m F_{\xi_k}(\rho_{k-1}^{\rm stoc})\,\big|\,\rho_{k-1}^{\rm stoc}\big] + \mb E\big[\m F_{\xi_k}(\rho_{k-1}^{\rm stoc}) - \m F_{\xi_k}(\rho_k^{\rm stoc})\,\big|\,\rho_{k-1}^{\rm stoc}\big]\\
&\stackrel{\ri}{=} \m F(\rho) - \m F(\rho_{k-1}^{\rm stoc}) + \mb E\big[\m F_{\xi_k}(\rho_{k-1}^{\rm stoc}) - \m F_{\xi_k}(\rho_k^{\rm stoc})\,\big|\,\rho_{k-1}^{\rm stoc}\big]\\
&\stackrel{\rii}{\leq} \m F(\rho) - \m F(\rho_{k-1}^{\rm stoc}) + \mb E\big[L(\xi_k)\sqrt{\KL(\rho_k^{\rm stoc}\,\|\, \rho_{k-1}^{\rm stoc})}\,\big|\,\rho_{k-1}^{\rm stoc}\big]\\
&\stackrel{\riii}{\leq} \m F(\rho) - \m F(\rho_{k-1}^{\rm stoc}) + \sqrt{\mb EL(\xi_k)^2} \cdot \sqrt{\mb E\big[\KL(\rho_k^{\rm stoc}\,\|\, \rho_{k-1}^{\rm stoc})\,\big|\,\rho_{k-1}^{\rm stoc}\big]}.
\end{align*}
Here, both (i) and (ii) are by Assumption~\ref{assump: stoch IKLPD}, and (iii) is by Cauchy--Schwarz inequality. Therefore, we have
\begin{align}
\Big(\frac{1}{\tau_k} + \frac{\lambda}{2}\Big)\mb E\KL(\rho\,\|\,\rho_k^{\rm stoc}) &- \frac{1}{\tau_k}\mb E\KL(\rho\,\|\,\rho_{k-1}^{\rm stoc})\nonumber\\
&\leq \m F(\rho) - \mb E\m F(\rho_{k-1}^{\rm stoc}) + \sqrt{\mb EL(\xi_k)^2}\cdot\sqrt{\mb E\KL(\rho_k^{\rm stoc}\,\|\, \rho_{k-1}^{\rm stoc})} - \frac{1}{\tau_k}\KL(\rho_k^{\rm stoc}\,\|\,\rho_{k-1}^{\rm stoc})\nonumber\\
&\leq \m F(\rho) - \mb E\m F(\rho_{k-1}^{\rm stoc}) + \frac{\tau_k}{4}\mb EL(\xi)^2.\label{eqn: stoch_onestep}
\end{align}
When $\lambda = 0$,~\eqref{eqn: stoch_onestep} implies
\begin{align*}
\tau_k\big[\mb E\m F(\rho_{k-1}^{\rm stoc}) - \m F(\rho^\ast)\big] \leq \mb E\KL(\rho^\ast\,\|\,\rho_{k-1}^{\rm stoc}) - \mb E\KL(\rho^\ast\,\|\,\rho_k^{\rm stoc}) + \frac{\tau_k^2}{4}\mb EL(\xi)^2.
\end{align*}
Therefore, we have
\begin{align*}
\min_{0\leq l\leq k-1} \mb E\m F(\rho_l^{\rm stoc}) - \m F(\rho^\ast) \leq \frac{\KL(\rho^\ast\,\|\,\rho_0)}{\tau_1 + \cdots + \tau_k} + \frac{\tau_1^2 + \cdots + \tau_k^2}{4(\tau_1 + \cdots + \tau_k)}\mb EL(\xi)^2.
\end{align*}
By taking $\tau_k = \frac{\tau}{\sqrt{k}}$ and using $k^{-1/2}\geq 2\sqrt{k+1} - 2\sqrt{k}$, the above inequality implies that
\begin{align*}
\min_{0\leq l\leq k-1} \mb E\m F(\rho_l^{\rm stoc}) - \m F(\rho^\ast) \leq \frac{4\KL(\rho^\ast\,\|\,\rho_0) + \tau^2\log(k+1)\mb EL(\xi)^2}{8\tau(\sqrt{k+1}-1)}.
\end{align*}
When $\lambda > 0$,~\eqref{eqn: stoch_onestep} implies
\begin{align*}
\mb E\m F(\rho_{k-1}^{\rm stoc}) - \m F(\rho^\ast) \leq \frac{1}{\tau_k}\mb E\KL(\rho^\ast\,\|\,\rho_{k-1}^{\rm stoc}) - \Big(\frac{1}{\tau_k} + \frac{\lambda}{2}\Big)\mb E\KL(\rho^\ast\,\|\,\rho_k^{\rm stoc}) + \frac{\tau_k}{4}\mb EL(\xi)^2.
\end{align*}
By taking $\tau_k = \frac{2}{\lambda(k+1)}$, we have
\begin{align*}
\min_{0\leq l\leq k-1}\mb E\m F(\rho_l^{\rm stoc}) - \m F(\rho^\ast) &\leq \frac{\lambda}{k}\KL(\rho^\ast\,\|\,\rho_0) + \frac{\mb EL(\xi)^2}{4k}\sum_{l=1}^k \frac{2}{\lambda(l+1)}\\
&\leq \frac{\lambda}{k}\KL(\rho^\ast\,\|\,\rho_0) + \frac{\log(k+1)}{2\lambda k}\mb EL(\xi)^2.
\end{align*}
In the last inequality, we use $\frac{1}{l+1} \leq \log\frac{l+1}{l}$ for all $l \geq 1$.
%%%%%%%%%%%%%%%%%%%%%%%%%%%%%%%%%%%%%%%%%%%%%%%
%%%%%%%%%%%%%%%%%%%%%%%%%%%%%%%%%%%%%%%%%%%%%%%
\subsection{Proofs of technical results}\label{app: lemmas}
\begin{proof}[Proof of Lemma~\ref{lem: one_step_improve}]
By first-order optimality condition of~\eqref{eqn: IKLPD}, we know that
\begin{align*}
\frac{\delta\m F}{\delta\rho}(\rho_{k}) + \frac{1}{\tau_k}\log\frac{\rho_{k}}{\rho_{k-1}}
\end{align*}
is a constant. Since $\m F$ is $\lambda$-relative strongly convex, we have
\begin{align*}
\m F(\rho) - \m F(\rho_{k})
&\geq \int_\Theta\frac{\delta\m F}{\delta\rho}(\rho_{k})(\theta)\,\dd(\rho-\rho_{k})(\theta) + \frac{\lambda}{2}\KL(\rho\,\|\,\rho_{k})\\
&= -\frac{1}{\tau_k}\int_\Theta\log\frac{\rho_{k}}{\rho_{k-1}}(\theta)\,\dd(\rho-\rho_{k})(\theta) + \frac{\lambda}{2}\KL(\rho\,\|\,\rho_{k})\\
&= -\frac{1}{\tau_k}\KL(\rho\,\|\,\rho_{k-1}) + \Big(\frac{1}{\tau_k} + \frac{\lambda}{2}\Big) \KL(\rho\,\|\,\rho_{k})  + \frac{1}{\tau_k} \KL(\rho_{k}\,\|\,\rho_{k-1}).
\end{align*}
\end{proof}

\begin{proof}[Proof of Lemma~\ref{lem: KL smoothing}]
Since $\rho^\ast$ is discrete probability measure, $\rho^\sigma = \rho^\ast\ast\m N(0, \sigma^2I_d)$ is a Gaussian mixture distribution. The main step is to prove
\begin{align}\label{eqn: merge_mixture}
\KL(\rho^\sigma\,\|\,\rho_0) \leq \sup_{\theta\in\supp(\rho^\ast)}\KL\Big(\m N\big(\theta, \sigma^2I_d\big)\,\Big\|\,\rho_0\Big).
\end{align}
In fact, for any $\theta_j, \theta_l\in\supp(\rho^\ast)$ with $\theta_j\neq\theta_l$, assume $w_j = \rho^\ast(\theta_j)$ and $w_l = \rho^\ast(\theta_l)$. Let
\begin{align*}
\rho^\ast_{-jl} = \sum_{\substack{\theta\in\supp(\rho^\ast)\\ \theta\neq\theta_j, \theta_l}}\rho^\ast(\theta)\delta_{\theta}
\end{align*}
be a measure by deleting the contribution of $\theta_j$ and $\theta_l$ in $\rho^\ast$. (Note that $\rho^\ast_{-jl}(\Theta) = 1-w_j-w_l < 1$, so $\rho^\ast_{-jl}$ is not a probability measure.)
Consider the optimization problem
\begin{align*}
\max_{\substack{w + w' = w_j + w_l\\ w, w'\geq 0}}g_{jl}(w, w') 
&\coloneqq\KL\Big(\big(\rho^\ast_{-jl} + w\delta_{\theta_j} + w'\delta_{\theta_l}\big)\ast\m N(0,\sigma^2)\,\Big\|\,\rho_0\Big)\\
&= \KL\Big(\rho^\ast_{-jl}\ast\m N(0, \sigma^2) + w\m N(\theta_j, \sigma^2I_d) + w'\m N(\theta_l, \sigma^2I_d)\,\Big\|\,\rho_0\Big).
\end{align*}
It is easy to see that $g_{jl}$ is a convex function on $\{(w, w')\subset\mb R_{\geq 0}^2: w+w' = w_j+w_l\}$. Therefore, $g_{jl}$ achieves its maximum on the boundary $(w, w') = (w_j+w_l, 0)$ or $(w, w') = (0, w_j+w_l)$. The above argument indicates that we can always merge two mixtures of $\rho^\sigma$ into one while the KL divergence is not decreasing. Therefore, the inequality~\eqref{eqn: merge_mixture} holds.
Applying~\eqref{eqn: merge_mixture}, we know
\begin{align*}
\KL(\rho^\sigma\,\|\,\rho_0) 
&\leq \sup_{\theta\in\supp(\rho^\ast)}\KL\Big(\m N\big(\theta, \sigma^2I_d\big)\,\Big\|\,\rho_0\Big)\\
&=\sup_{\theta\in\supp(\rho^\ast)} \frac{1}{2}\Big(\log\frac{\det(\beta^2I_d)}{\det(\sigma^2I_d)} - d + \tr(\beta^{-2}\sigma^2I_d) + \theta^\top(\beta^2I_d)^{-1}\theta\Big)\\
&= \frac{1}{2}\Big(2d\log\frac{\beta}{\sigma} - d + \frac{d\sigma^2}{\beta^2} + \frac{\|\theta\|^2}{\beta^2}\Big)\\
&\leq d\log\frac{\beta}{\sigma} + \frac{d\sigma^2 + R_\theta^2}{2\beta^2} - \frac{d}{2}.
\end{align*}
\end{proof}

\subsection{Convexity of NPMLE and KL Divergence}\label{app:convexity}
\paragraph{NPMLE.} Recall that the empirical loss function in NPMLE is
\begin{align*}
\m L_n(\rho) = -\frac{1}{n}\sum_{i=1}^n\log\Big(\int_\Theta p(X_i\,|\,\theta)\,\dd \rho(\theta)\Big).
\end{align*}
Then, for any $\rho, \rho'\in\ms P(\Theta)$ and $t\in[0, 1]$, we have
\begin{align*}
\m L_n\big((1-t)\rho + t\rho'\big)
&= -\frac{1}{n}\sum_{i=1}^n \log\Big((1-t)\int_\Theta p(X_i\,|\,\theta)\,\dd\rho(\theta) + t\int_\Theta p(X_i\,|\,\theta)\,\dd\rho'(\theta)\Big)\\
&\stackrel{\ri}{\leq} -\frac{1-t}{n}\sum_{i=1}^n\log\Big(\int_\Theta p(X_i\,|\,\theta)\,\dd \rho(\theta)\Big) - \frac{t}{n}\sum_{i=1}^n\log\Big(\int_\Theta p(X_i\,|\,\theta)\,\dd \rho'(\theta)\Big)\\
&= (1-t)\m L_n(\rho) + t\m L_n(\rho').
\end{align*}
Here, (i) is due to the convexity of function $x\mapsto -\log x$. The above inequality implies
\begin{align*}
\m L_n(\rho') - \m L_n(\rho) \geq \frac{\m L_n\big(\rho + t(\rho'-\rho)\big) - \m L_n(\rho)}{t}, \quad\forall\,t\in[0, 1].
\end{align*}
By the definition of first-order variation and letting $t\to 0^+$ on the right-hand side yield
\begin{align*}
\m L_n(\rho') - \m L_n(\rho) \geq \lim_{t\to0^+}\frac{\m L_n\big(\rho + t(\rho'-\rho)\big) - \m L_n(\rho)}{t} = \int\frac{\delta\m F}{\delta\rho}(\rho)\,\dd(\rho'-\rho).
\end{align*}
Therefore, $\m L_n$ is ($L_2$-)convex.

\paragraph{KL divergence} For any $\pi\in\ms P^r(\Theta)$, we will show that $\KL(\cdot\,\|\,\pi)$ is $1$-relative strongly convex. We provide the proof to make our paper self-contained. For any $\rho, \rho'\in\ms P^r(\Theta)$, we have
\begin{align*}
\KL(\rho'\,\|\,\pi) - \KL(\rho\,\|\,\pi)
&= \int_\Theta -\log\pi \,\dd(\rho' - \rho) + \KL(\rho'\,\|\,\rho) + \int_\Theta\log\rho\,\dd(\rho'-\rho)\\
&= \int_\Theta \frac{\delta\KL(\cdot\,\|\,\pi)}{\delta\rho}(\rho)\,\dd(\rho'-\rho) + \KL(\rho'\,\|\,\rho).
\end{align*}
In the last equation, we use the fact that 
\begin{align*}
\frac{\delta\KL(\cdot\,\|\,\pi)}{\delta\rho}(\rho) = \log\rho - \log\pi.
\end{align*}
In fact,~\cite{nitanda2022convex, chizat2022mean} show a stronger result that for any convex functional $\m H$, the functional $\m F(\rho) = \m H(\rho) + \lambda\int\rho\log\rho$ is $\lambda$-relative strongly convex. In the KL divergence case, we can simply take $\m H(\rho) = -\int_\Theta\log\pi\,\dd\rho$ and $\lambda = 1$.

%%%%%%%%%%%%%%%%%%%%%%%%%%%%%%%%%%%%%%%%%%%%%%%%%%%%%%%%%%%%%%%%%%%%%%%%
%%%%%%%%%%%%%%%%%%%%%%%%%%%%%%%%%%%%%%%%%%%%%%%%%%%%%%%%%%%%%%%%%%%%%%%%
%%%%%%%%%%%%%%%%%%%%%%%%%%%%%%%%%%%%% Checklist %%%%%%%%%%%%%%%%%%%%%%%%%%%%%%%%%%%%%%%%%%%%%

\end{document}